\begin{document}

\title{DISH: A Distributed Hybrid Optimization Method Leveraging System Heterogeneity}

\author{Xiaochun Niu \thanks{Xiaochun Niu is with the Department of Industrial Engineering and Management Sciences, Northwestern University, Evanston, IL 60201 USA (\url{xiaochunniu2024@u.northwestern.edu}). } \and Ermin Wei
\thanks{Ermin Wei is with the Department of Electrical and Computer Engineering and the Department of Industrial Engineering and Management Sciences, Northwestern University, Evanston, IL 60201 USA (\url{ermin.wei@northwestern.edu}).}}

\date{}
\maketitle

\begin{abstract}
We study distributed optimization problems over multi-agent networks, including consensus and network flow problems. Existing distributed methods neglect the heterogeneity among agents' computational capabilities, limiting their effectiveness. To address this, we propose DISH, a \underline{dis}tributed \underline{h}ybrid method that leverages system heterogeneity. DISH allows agents with higher computational capabilities or lower computational costs to perform local Newton-type updates while others adopt simpler gradient-type updates. Notably, DISH covers existing methods like EXTRA, DIGing, and ESOM-0 as special cases. To analyze DISH's performance with general update directions, we formulate distributed problems as minimax problems and introduce GRAND (\underline{g}radient-\underline{r}elated \underline{a}scent a\underline{n}d \underline{d}escent) and its alternating version, Alt-GRAND, for solving these problems. GRAND generalizes DISH to centralized minimax settings, accommodating various descent ascent update directions, including gradient-type, Newton-type, scaled gradient, and other general directions, within acute angles to the partial gradients. Theoretical analysis establishes global sublinear and linear convergence rates for GRAND and Alt-GRAND in strongly-convex-nonconcave and strongly-convex-PL settings, providing linear rates for DISH. In addition, we derive the local superlinear convergence of Newton-based variations of GRAND in centralized settings. Numerical experiments validate the effectiveness of our methods.
\end{abstract}

\section{Introduction}
We study distributed multi-agent optimization problems with communication constraints \cite{bertsekas1989parallel}. These include scenarios like \emph{distributed consensus problems} \cite{nedic2009distributed} and \emph{network flow problems}, driven by applications in power grids, sensor networks, communication networks, and machine learning \cite{lamnabhi2017systems,warnat2021swarm}. Agents in distributed computing are located at network nodes and restricted to local data and neighbor communication due to privacy and communication concerns. Their shared goal is to optimize an objective function collaboratively through distributed procedures.

There is a growing literature on developing distributed algorithms, such as gradient-type \cite{shi2015extra,nedic2017achieving,qu2017harnessing} and Newton-type methods \cite{shamir2014communication,zhang2015disco,wang2018giant,crane2019dingo} for consensus problems, and methods \cite{kelly1998rate} for network flow problems. However, a notable limitation of existing methods is that they often require all agents to take the same type of updates, leading to bottlenecks caused by agents equipped with slower hardware. This limitation restricts the applicability of fast-converging methods that rely on higher-order computations if even one agent in the system cannot handle them. Nonetheless, heterogeneous configurations are common in modern systems, where advanced processors coexist with older-generation ones,  resulting in agents with varying computation capabilities due to hardware constraints. Such heterogeneity presents significant challenges in practical distributed computing systems \cite{chen2015mxnet}. Thus, the question arises:

\vskip4pt
\emph{Can we design flexible and efficient distributed hybrid methods to utilize agents' heterogeneous computation capabilities?}

We answer this question affirmatively by proposing DISH, \underline{dis}tributed \underline{h}ybrid methods for consensus and network flow problems. DISH utilizes system heterogeneity by allowing agents to choose gradient-type or Newton-type updates based on their computation capabilities. In particular, there can be both gradient-type and Newton-type agents in the same communication round, and agents can switch between update types, adapting to their current situation. Notably, when all agents consistently perform Newton-type updates, the hybrid methods for the two problems provide distinct ways to approximate the dual Hessian in the Newton-type descent ascent method (NDA) with distributed implementations. For consensus problems, DISH covers well-known primal-dual gradient-type methods such as EXTRA \cite{shi2015extra}, DIGing \cite{nedic2017achieving}, and \cite{qu2017harnessing}, and primal-Newton-dual-gradient methods like ESOM-0 \cite{mokhtari2016decentralized} as special cases. It can also be applied to the dual problem of feature-partitioned distributed problems with efficient computation of the conjugate functions. Numerical experiments validate the effectiveness of our hybrid methods, showing faster convergence speeds as the number of Newton-type agents increases.

To analyze the performance of DISH with general update directions, we consider distributed applications as minimax problems and analyze the general \underline{g}radient-\underline{r}elated \underline{a}scent a\underline{n}d \underline{d}escent algorithmic framework (GRAND) for solving minimax problems. GRAND represents the generalization of DISH to centralized minimax settings. We introduce the  minimax optimization problem with $L:\RR^{d}\times\RR^{p}\to\RR$  strongly convex in $x$ but possibly nonconcave in $y$:
\#\label{prob:L}
\max_{y\in\RR^{p}}\min_{x\in\RR^{d}} L(x,y).
\#
The aforementioned distributed optimization problems can be formulated as a form of Problem \ref{prob:L}. Problem \ref{prob:L} has implications beyond distributed multi-agent optimization and is extensively studied in fields like supervised learning and adversarial training \cite{du2019linear}. The \emph{gradient descent ascent method} (GDA) is a simple method for tackling Problem \ref{prob:L}, which performs simultaneous gradient descent on $x$ and gradient ascent on $y$ at each iteration \cite{arrow1958studies,lin2020gradient}.
In addition, Newton-type methods with local superlinear convergence have been proposed \cite{tapia1977diagonalized,byrd1978local,zhang2020newton}.  However, existing analyses do not consider a mixture of first and second-order steps. This limitation, along with the demand for distributed hybrid methods, motivates the analysis of GRAND. GRAND allows $x$ and $y$ updates within uniformly bounded acute angles to $L$'s partial gradients. It covers GDA, scaled gradient, Newton-type, and quasi-Newton-type descent ascent methods as special cases. We also introduce the alternating version, Alt-GRAND, where $x$ and $y$ are updated sequentially. 

We establish the global sublinear convergence of GRAND and Alt-GRAND for strongly-convex-nonconcave problems. In addition, we demonstrate their linear convergence rates under the assumption of a strongly-convex-Polyak-\L{}ojasiewicz (PL) condition. This condition covers various scenarios, including distributed optimization problems, ensuring the linear rate of DISH.
The analysis faces challenges due to the coupled updates of $x$ and $y$ and the time-varying angles between updates and gradients. To tackle these challenges, we bound $y$'s optimality measure and $x$'s tracking error through coupled inequalities. Inspired by two-timescale analysis for bilevel problems, we consider linear combinations of these bounds as Lyapunov functions. Moreover, we examine the local performance of Newton-based methods in centralized settings. In particular, we show the local quadratic rates of the alternating Newton-type method (Alt-NDA) and its variants with multiple $x$ updates. In addition, we present a cubic-rate method that reuses the Hessian inverse for two consecutive steps. 

In summary, to the best of our knowledge, our distributed hybrid methods are the first to allow heterogeneous local updates for distributed consensus and network flow problems with provable convergence and rate guarantees.

\subsection{Related Works} 
Our work relates to the following growing literature.
\vskip4pt 
\noindent{\bf Distributed Optimization.} For \emph{distributed consensus problems} \cite{nedic2009distributed}, various first-order iterative methods exist. Distributed (sub)-gradient descent (DGD) \cite{nedic2009distributed} combines local gradient descent steps with weighted averaging among neighbors, achieving near-optimal solutions with constant stepsizes.
Other methods like EXTRA \cite{shi2015extra}, DIGing \cite{nedic2017achieving}, and \cite{qu2017harnessing} employ gradient tracking techniques and can be viewed as primal-dual gradient methods in augmented Lagrangian formulations, solving exact solutions with constant stepsizes. Second-order primal methods, such as Network Newton \cite{mokhtari2016network} and Distributed Newton method \cite{tutunov2019distributed}, approximate Newton steps iteratively through inner loops. Dual decomposition-based methods like ADMM \cite{boyd2011distributed}, ESOM \cite{mokhtari2016decentralized}, and PD-QN \cite{eisen2019primal} are also popular. Among them, PD-QN is a primal-dual quasi-Newton method with linear convergence. ESOM is closely related to our DISH method, which combines second-order primal updates with first-order dual updates and demonstrates provable linear convergence. However, none of these methods support heterogeneous agents with different update types. Our earlier work \cite{niu2023fedhybrid} develops a linearly converging distributed hybrid method allowing different update types but relying on the server-client (federated) network structure. 

In addition to the consensus problems (sample-partitioned), \emph{feature-partitioned} distributed problems are also prevalent in various fields, including bioinformatics, natural language processing, healthcare, and financial services \cite{boyd2011distributed}.

Distributed algorithms also tackle 
\emph{network flow optimization problems} \cite{bertsekas1989parallel} in fields like commodity networks and electric power systems. Existing literature covers first-order methods \cite{kelly1998rate} and second-order methods \cite{wei2013distributed}. 
Nonetheless, there is a lack of research exploring hybrid methods that enable different update types at network edges or agents.

\vskip4pt
\noindent{\bf Minimax Optimization.} 
A  simple method for minimax problems is GDA \cite{arrow1958studies}.
The monotonicity of the gradient $(\nabla_x L(x,y)^\intercal,-\nabla_y L(x,y)^\intercal)^\intercal$ enables analysis using theorems on monotone operators in variational inequalities \cite{nesterov2011solving}. 
Various first-order methods derived from GDA achieve better performance in different settings, like alternating GDA (Alt-GDA) \cite{bailey2020finite}. Second-order methods exploit Hessian information to accelerate convergence.  Some generalize Newton's method from minimization to minimax settings. Studies on Lagrangian problems corresponding to constrained optimization problems demonstrate superlinear local convergence \cite{tapia1977diagonalized,byrd1978local}. A complete Newton method with local quadratic rates is proposed \cite{zhang2020newton}. Cubic regularized Newton methods \cite{huang2022cubic} ensure global and local convergence rates by solving minimax subproblems at each iteration. However, global convergence analysis is lacking for Newton-type descent ascent methods without inner loops to solve subproblems or a line search to select stepsizes.

Methods like FR \cite{wang2019solving} and GDN \cite{zhang2020newton} involve first-order updates on $x$ with second-order updates on $y$. They converge locally to a minimax point, with GDN showing linear convergence. However, global performance analysis is missing for methods with general update directions.

\subsection{Contributions} As a summary, our contributions are as follows.
\begin{enumerate}
    \item We propose DISH, a hybrid method for consensus problems (also dual problems of feature-partitioned problems) and network flow problems. DISH leverages agents' heterogeneous computation capabilities by allowing them to choose between gradient-type and Newton-type updates.
    \item We establish global sublinear and linear rates for the GRAND frameworks for centralized strongly-convex-nonconcave and strongly-convex-PL minimax problems, ensuring linear convergence for DISH.
    \item We examine the local superlinear performance of Newton-based variations of GRAND for centralized minimax problems.
    \item Numerical results validate the efficiency of equipping the distributed systems with Newton-type agents.
\end{enumerate}

\subsection{Notation and Outline}
For a matrix $A$, let $\sigma_{\min}(A)$ ($\sigma_{\min}^+(A)$) be its smallest (non-zero) singular value. Let $\theta_{\min}(A)$ be the smallest eigenvalue if $A$ is positive definite. Let $\mathds{1}_n$ be the vector of all ones, $\otimes$ be the Kronecker product, and $\circ$ be the function composition. Let $O(\cdot)$ hide constants independent of the target parameter. 

The paper is organized as follows. Section \ref{sect:dist-opt-app} formulates distributed optimization problems as minimax problems and presents the distributed hybrid methods. Section \ref{sect:alg-GRAND} introduces GRAND and Alt-GRAND for centralized minimax problems. Section \ref{sect:global} analyzes the global convergence of GRAND. Section \ref{sect:local-Newton} discusses the local higher-order rates of Newton-based methods. Section \ref{sect:exp} demonstrates the numerical results.

\section{Hybrid Methods for Distributed Optimization} \label{sect:dist-opt-app}
In this section, we introduce distributed optimization problems, including consensus (DC) and network flow (NF) problems. We propose DISH as a distributed hybrid method. In particular, when all agents perform Newton-type updates, the methods for the two problem settings provide distinct ways to approximate the Newton-type descent ascent method (NDA) with distributed implementations.

We study optimization problems over multi-agent networks in both DC and NF settings. We define $\cG=\{\cN,\cE\}$ as a connected undirected network with the node set $\cN=\{1,\cdots,n\}$ and the edge set $\cE\subseteq \{\{i,j\}\given i,j\in\cN, i\neq j\}$. There are $n$ agents in the system, where each agent is located at a node of $\cG$ and can only communicate with its neighbors on $\cG$ due to privacy issues or communication budgets.

\subsection{Distributed Consensus Problems}
This section studies distributed consensus problems. We formulate the problem in a minimax form.

\subsubsection{Problem Formulation}
In consensus problems, all agents in the network aim to optimize an objective function collaboratively by employing a distributed procedure. Let $\omega\in\RR^d$ be the decision variable and $f_i:\RR^d\to \RR$ be the local function at agent $i$. We study an optimization problem over $\cG$ that $\min_{\omega} \sum_{i=1}^n f_i(\omega)$.
For example, for empirical risk minimization problems in supervised learning, $f_i$ is the empirical loss over local data samples kept at agent $i$. We impose the following standard assumptions on $f_i$.
\begin{assumption}
\label{ass:Hessian-dist}
The local function $f_i$ is twice differentiable, $m_i$-strongly convex, and $\ell_i$-Lipschitz smooth with constants $0 < {m_i} \le \ell_i <\infty$ for any agent $i\in\cN$.
\end{assumption}

Let $m_{\sf dc} = \min_{i\in\cN}\{m_i\}$ and $\ell_{\sf dc} = \max_{i\in\cN}\{\ell_i\}$. We decouple the computation of individual agent by introducing $x_i $ as the local copy of $\omega$ at agent $i$ to develop distributed methods. We formulate \textit{distributed consensus problems} \cite{nedic2009distributed} as
\#\label{prob:constrained_intro}
    \min_{x_1,\cdots,x_n\in \RR^d}   \sum_{i=1}^n f_i(x_i) \ \
    \text{s.t. } x_i = x_j, \text{ for } \{i,j\}\in\cE.
\#
The consensus constraints $x_i = x_j$ for $\{i,j\}\in\cE$ enforce the equivalence of  Problem \ref{prob:constrained_intro} and the original problem for a connected network $\cG$. For compactness, we denote by $x = (x_1^\intercal,\cdots,x_{n}^\intercal)^\intercal$ the concatenation of local variables and $f^{\sf dc}:\RR^{nd}\to\RR$ the aggregate function and reformulate Problem \ref{prob:constrained_intro} in an equivalent form,
\#\label{prob:constrained}
    \min_{x\in\RR^{nd}}  f^{\sf dc}(x) = \sum_{i=1}^n f_i(x_i) \quad
    \text{s.t. } (Z\otimes I_d) x = x, 
\#
where $Z\in \RR^{d \times d}$ is a nonnegative consensus matrix and satisfies the following assumption.

\begin{assumption}\label{ass:consensus} Matrix $Z$ corresponding to $\cG$  satisfies that
\begin{enumerate}
    \item[(a)] Off-diagonal elements: $z_{ij}\neq 0$ if and only if $\{i,j\} \in \cE$;
    \item[(b)] Diagonal elements: $z_{ii}>0$ for all $i\in\cN$;
    \item[(c)] $z_{ij}=z_{ji}$ for all $i\neq j$ and $i,j\in\cN$;
    \item[(d)] $Z\mathds{1}_n = \mathds{1}_n$.
\end{enumerate}
\end{assumption}
Assumption \ref{ass:consensus} is standard for consensus matrices. By Perron-Frobenius theorem, we have $\rho(Z)=1$, $\gamma<1$, 
and $\ker(I-Z) = \text{span}\{\mathds{1}_{n}\}$.  The matrix $Z$ ensures that $(Z\otimes I_d) x = x$ if and only if $x_i = x_j$ for all $\{i,j\}\in\cE$ \cite{nedic2009distributed}. 
Let $W=(I_{n} - Z)\otimes I_d $; thus $\rho(W)  < 2$, $\sigma_{\min}^+(W)=1-\gamma$, 
and $\ker(W) = \text{span}\{\mathds{1}_{n}\otimes y:y\in\RR^{d}\}$. We rewrite the constraint in Problem \ref{prob:constrained} as $Wx=0$.

Let variable $y = ( y_1^\intercal,\cdots,  y_n^\intercal)^\intercal$ represent the dual variable with $ y_i\in\RR^d$ associated with the constraint $z_{ii}x_i - \sum_{j\in\cN}z_{ij}x_j=0$ at agent $i$. We introduce the augmented Lagrangian $L^{\sf dc}(x, y)$ of Problem \ref{prob:constrained} with a constant $\mu\ge0$,
\#\label{eq:lagrangian_func}
L^{\sf dc}(x,y) = f^{\sf dc}(x) + y^{\intercal}W x + {\mu}x^{\intercal}Wx/2.
\#
The term $\mu x^{\intercal}Wx / 2$ is a penalty for violating the consensus constraint. The augmented Lagrangian in \eqref{eq:lagrangian_func} can also be viewed as the Lagrangian associated with a penalized problem 
$\min_{x}  f^{\sf dc}(x) + \mu{x}^{\intercal}W{x}/2$ such that $Wx=0$. It is equivalent to Problem \ref{prob:constrained} since $\mu x^{\intercal}Wx / 2$ is zero for any feasible $x$. By the convexity in Assumption \ref{ass:Hessian-dist} and Slater's condition,  strong duality holds for the penalized problem. Thus, the penalized problem and Problem \ref{prob:constrained} are equivalent to the dual problem,
\#\tag{DC} \label{prob:dual}
\max_{y\in\RR^{nd}} \psi^{\sf dc}(y), \text{ where } \psi^{\sf dc}(y) = \min_{x\in\RR^{nd}} L^{\sf dc}(x, y),
\#
where we refer to $\psi^{\sf dc}:\RR^{nd}\to\RR$ as the dual function and the problem as Problem \ref{prob:dual}.
We now develop distributed methods to solve Problem \ref{prob:dual}. As we will illustrate after Assumption \ref{ass:Hessian}, given any $y\in\RR^{nd}$, $L^{\sf dc}(\cdot,y)$ is strongly convex with a unique minimizer. For convenience, for any $L$ in Problem \ref{prob:L} satisfying such a condition, letting $x^*(y)$ be the unique minimizer for any $y$, we define $\psi:\RR^p\to\RR$ as follows,
\#\label{eq:x_star_y}
& x^*(y) = \argmin_{x\in\RR^d} L(x,y), \notag\\
& \psi(y) = \min_{x\in\RR^d} L(x,y) = L(x^*(y), y).
\#
The next lemma shows the forms of $\nabla \psi(y)$ and $\nabla^2 \psi(y)$, based on the well-known envelope theorem. We show it here for completeness. 
Let $N:\RR^{d}\times\RR^p\to\RR^{p\times p}$ be an operator,
\#\label{eq:N-operator}
N(x,y) = \nabla_{yx}^2 L(x,y) [\nabla_{xx}^2L(x,y)]^{-1}\nabla_{xy}^2L(x,y) - \nabla_{yy}^2 L(x,y). 
\#

\begin{lemma}\label{lem:psi-grad-hess}
    Given any $y\in\RR^p$, suppose $L(\cdot, y)$ is strongly convex with a unique minimizer $x^*(y)$. With $x^*(y)$ defined in \eqref{eq:x_star_y} and $N$ defined in \eqref{eq:N-operator}, it holds that
    $\nabla \psi(y) = \nabla_y L(x^*(y), y)$ and $\nabla^2 \psi(y) = - N(x^*(y),y)$.
\end{lemma}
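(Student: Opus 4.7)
The plan is a direct envelope-theorem calculation combined with implicit differentiation. The key observation is that, by strong convexity of $L(\cdot,y)$, the minimizer $x^*(y)$ is characterized by the first-order optimality condition $\nabla_x L(x^*(y),y)=0$, and the Hessian block $\nabla^2_{xx}L(x^*(y),y)$ is positive definite (hence invertible), which is exactly what enables both the envelope argument and the implicit function theorem.

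First I would establish that $x^*(\cdot)$ is continuously differentiable. Applying the implicit function theorem to the identity $\nabla_x L(x^*(y),y)=0$ and using the invertibility of $\nabla^2_{xx}L(x^*(y),y)$, one gets differentiability of $x^*$ with
\[
\nabla_y x^*(y) \;=\; -[\nabla^2_{xx}L(x^*(y),y)]^{-1}\nabla^2_{xy}L(x^*(y),y).
\]
This is the only nontrivial ingredient; everything afterward is chain rule.

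Next, for the gradient, I would apply the chain rule to $\psi(y)=L(x^*(y),y)$:
\[
\nabla\psi(y) \;=\; [\nabla_y x^*(y)]^{\intercal}\,\nabla_x L(x^*(y),y) + \nabla_y L(x^*(y),y).
\]
The first term vanishes by the first-order optimality condition, yielding $\nabla\psi(y)=\nabla_y L(x^*(y),y)$, as claimed. For the Hessian, I would differentiate this identity once more with respect to $y$, using the chain rule on both arguments of $\nabla_y L(x^*(y),y)$:
\[
\nabla^2\psi(y) \;=\; \nabla^2_{yx}L(x^*(y),y)\,\nabla_y x^*(y) + \nabla^2_{yy}L(x^*(y),y).
\]
Substituting the implicit-function formula for $\nabla_y x^*(y)$ gives
\[
\nabla^2\psi(y) \;=\; -\nabla^2_{yx}L\,[\nabla^2_{xx}L]^{-1}\nabla^2_{xy}L + \nabla^2_{yy}L \;=\; -N(x^*(y),y),
\]
with all second derivatives evaluated at $(x^*(y),y)$. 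This matches the definition of $N$ in \eqref{eq:N-operator}.

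There is no real conceptual obstacle; the main points to be careful about are the Jacobian/gradient conventions (so the transpose in the chain rule is placed correctly) and explicitly citing strong convexity of $L(\cdot,y)$ both to guarantee the existence of $x^*(y)$ and to ensure $[\nabla^2_{xx}L]^{-1}$ is well-defined. The smoothness needed to justify the second differentiation follows from the twice-differentiability of $L$ together with the continuity of $x^*(\cdot)$ obtained in the first step.
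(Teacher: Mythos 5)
Your proposal follows exactly the same route as the paper's proof: implicit differentiation of the optimality condition $\nabla_x L(x^*(y),y)=0$ to obtain $\nabla_y x^*(y)=-[\nabla^2_{xx}L]^{-1}\nabla^2_{xy}L$, the chain rule with the vanishing first-order term for $\nabla\psi$, and a second differentiation plus substitution for $\nabla^2\psi=-N(x^*(y),y)$. The argument is correct and matches the paper step for step.
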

   
Lemma \ref{lem:psi-grad-hess} shows that $-N$ can evaluate the Hessian $\nabla^2 \psi(y)$  with appropriate arguments. This property allows us to approximate $\nabla^2 \psi(y)$ in a distributed manner when designing the hybrid methods.

\subsubsection{Distributed Hybrid Methods for Consensus Problems}
We propose DISH to solve Problem \ref{prob:dual}. It allows choices of gradient-type and Newton-type updates for each agent at each iteration based on their current computation capabilities. The compact form of DISH shows as follows. At iteration $k$, 
\#\label{eq:up_pre}
 & x^{k+1} =  x^k - AP^k\nabla_{ x}L^{\sf dc}( x^k, y^k), \notag\\
 & y^{k+1}= y^k + BQ^k\nabla_{ y} L^{\sf dc}( x^k, y^k),
\#
where stepsize matrices $A = \diag\{a_1, \cdots, a_n\}\otimes I_d$ and $B = \diag\{b_1,\cdots,b_n\}\otimes I_d$ consist of personalized stepsizes $a_i$ and $b_i>0$ for $i\in\cN$ and block diagonal scaling matrices $P^k = \diag\{P_1^k,\cdots,P_n^k\}$ and $Q^k = \diag\{Q_1^k,\cdots,Q_n^k\}$ consist of positive definite local scaling matrices $P_i^k$ and $Q_i^k\in\RR^{d\times d}$ for $i\in\cN$. Here are some examples of possible scaling matrices:
\#\label{eq:dish-choices}
& \textit{Primal:}\quad \text{Gradient-type: }P_i^k=I_d; \nonumber\\
& \quad \quad \quad \text{Newton-type: }P_i^k = (\nabla^2 f_i(x_i^k) + \mu I_d)^{-1}. \nonumber\\
& \textit{Dual:}\quad \text{Gradient-type: }Q_i^k=I_d; \nonumber \\
& \quad \quad \quad \text{Newton-type: }Q_i^k = \nabla^2 f_i(x_i^k) + \mu I_d.
\#
We refer to \cite{niu2022dish} for a detailed explanation of the choices of local scaling matrices. 
We define two cases of DISH: DISH-G, where all agents perform gradient-type updates (which is equivalent to GDA), and DISH-N, which approximates the Newton-type descent ascent method (NDA) with a distributed procedure since the primal (for $\mu>0$) and dual Hessians are inseparable. In addition to gradient-type and Newton-type updates, DISH allows agents to take other local updates, such as scaled gradient or quasi-Newton directions. Algorithm \ref{alg:dish} presents the distributed implementation of DISH by substituting the partial gradients in \eqref{eq:up_pre}. It includes a primal step (Line \ref{line:primal}) and a dual step (Line \ref{line:dual}) at each agent. Moreover, an alternating version of DISH under the Alt-GRAND framework is ensured to converge. Another practical variant is when agent $i$ obtains $y_i^{k+1}$ using the updated $x_i^{k+1}$, some $x_j^{k}$, and some updated $x_j^{k+1}$ from its neighbors.

\begin{algorithm}[ht] 
    \caption{DISH for Consensus Problems}
    \begin{algorithmic}[1]\label{alg:dish}
    \STATE{\textbf{Input:} Initialization $x_i^0, y_i^0\in\RR^d$, stepsizes $a_i,b_i>0$ for $i\in \cN$, and $\mu\ge 0$.}  
    \FOR{$k =  0, \ldots, K-1$} 
        \FOR{each agent $i\in \cN$ in parallel}
            \STATE{Send $x_i^{k}$ and $ y_i^{k}$ to its neighbors $j$ for $\{i,j\}\in\cE$;}
            \STATE{Choose its local scaling matrices $P_i^k$ and $Q_i^k$;}
            \STATE{$x_i^{k+1} = x_i^k - a_iP_i^k[\nabla f_i(x_i^k) + (1-z_{ii})(y_i^k +\mu x_i^k) -\sum_{j\colon\{j,i\}\in\cE}z_{ij} (y_{j}^k+\mu x_{j}^k)];$ }\label{line:primal}
           \STATE{$ y_i^{k+1} =  y_i^k + b_iQ_i^k\big[(1-z_{ii})x_i^k - \sum_{j\colon\{i,j\}\in\cE}z_{ij}x_{j}^k\big]$.}\label{line:dual}
        \ENDFOR
    \ENDFOR
\end{algorithmic}
\end{algorithm}

DISH covers existing distributed methods such as EXTRA \cite{shi2015extra}, DIGing \cite{nedic2017achieving}, \cite{qu2017harnessing}, and ESOM-0 \cite{mokhtari2016decentralized} through appropriate parameter choices. More details on these relationships can be found in \cite{niu2022dish}. DISH allows agents with higher computational capabilities or cheaper computational costs to locally implement Newton-type updates, while others can adopt simpler gradient-type updates. It provides flexibility by allowing agents to use different types of updates across iterations and between primal and dual spaces within the same iteration. Numerical studies in Section \ref{sect:exp-dish} show that DISH achieves faster performance when more agents adopt Newton-type updates since it better utilizes local information. It is worth noting that Algorithm \ref{alg:dish} offers alternative ways to develop distributed methods beyond the choices in \eqref{eq:dish-choices}. For instance, PD-QN \cite{eisen2019primal}, which matches the linear rate of DISH, approximates the primal-dual quasi-Newton method using distributable matrices that satisfy the quasi-Newton (global secant) conditions. PD-QN is a special case of Algorithm \ref{alg:dish} since its scaling matrices are uniformly lower and upper-bounded.

\subsubsection{Feature-Partitioned Distributed Problems} 
We consider prediction problems over $\cG$ and denote by $\Theta \in \RR^{N\times d}$ the input data matrix with $N$ samples and $d$ features. Then Problem \ref{prob:constrained_intro} corresponds to sample-partitioned settings with partitioned data $\Theta = (\theta^\intercal_1,\cdots,\theta^\intercal_n)^\intercal$, where a row block $\theta_i\in\RR^{N_i\times d}$ represents the $N_i$ local samples kept at agent $i$ and $\sum_{i\in\cN} N_i=N$. Alternatively, in feature-partitioned settings \cite{boyd2011distributed}, the data matrix is split into $\Theta = (\Theta_1,\cdots,\Theta_n)$, where a column block $\Theta_i\in\RR^{N\times d_i}$ is the $d_i$ local features kept at agent $i$ and $\sum_{i\in\cN} d_i=d$. In this setting, each agent has access to the entire set of data samples but only a unique subset of the features. The previous section presents DISH to solve sample-partitioned consensus problems, and now we consider its extension to feature-partitioned distributed settings.

Feature-partitioned problems are likely to involve a moderate number of samples and a large number of features \cite{boyd2011distributed}. For example, scientists can collaboratively study DNA mutations using a few volunteers' DNA data recorded at multiple labs; and doctors may evaluate shared patients' health conditions by leveraging their medical data from several specialists.

Given partitioned data $\Theta = (\Theta_1,\cdots,\Theta_n)\in\RR^{N\times d}$ and $\Theta_i\in\RR^{N\times d_i}$, we decompose the decision variable as $\xi = (\xi_1^\intercal,\cdots,\xi_n^\intercal)^\intercal\in\RR^d$ with $\xi_i\in\RR^{d_i}$. This gives $\Theta \xi=\sum_{i\in\cN} \Theta_i \xi_i$. We consider a convex loss function $\phi$ and a convex and separable regularizer $r$ such that $r(\xi) = \sum_{i\in\cN} r_i(\xi_i)$.  Examples of separable regularizers include the $l_2$ norm $\|\xi\|^2 = \sum_{i\in\cN} \|\xi_i\|^2$. We can formulate the optimization problem of the feature-partitioned scenario as follows,
\#\label{prob:feature}
\min_{\xi\in\RR^d} \phi\big(\sum_{i=1}^n \Theta_i \xi_i\big) + \sum_{i=1}^n r_i(\xi_i).
\#
Let $f^*(\lambda) = \max_x\{\lambda^\intercal x - f(x)\}$ be the convex dual conjugate of any function $f$. The following proposition shows that the dual problem of Problem \ref{prob:feature} takes the form of the consensus problem in \eqref{prob:constrained}. Similar results are also shown in \cite{boyd2011distributed}.
\begin{proposition} \label{prop:feature}
Problem \ref{prob:feature} is equivalent to the following problem with $x=(x_1^\intercal,\cdots,x_n^\intercal)^\intercal\in\RR^{nN}$ and a consensus matrix $Z$ corresponding to graph $\cG$,
\$
\min_{x} \sum_{i=1}^n \big[r_i^*(-\Theta_i^\intercal x_i) + \phi^*(x_i)/n\big], \text{s.t. } [(I_{n} - Z)\otimes I_d] x = 0.
\$
\end{proposition}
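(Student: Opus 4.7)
The plan is to derive the claimed formulation as the Fenchel dual of Problem \ref{prob:feature} and then express the resulting (centralized) dual as a distributed consensus problem by introducing local copies of the shared dual variable.

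First, I would decouple the coupling term $\phi(\sum_i \Theta_i \xi_i)$ by introducing an auxiliary primal variable $z\in\RR^N$ together with the constraint $z = \sum_i \Theta_i \xi_i$, giving the equivalent reformulation $\min_{\xi,z}\ \phi(z) + \sum_i r_i(\xi_i)$ subject to $z = \sum_i \Theta_i \xi_i$. Attaching a Lagrange multiplier $\lambda\in\RR^N$ and minimizing out $z$ and the $\xi_i$'s separately, the minimization over $z$ produces $-\phi^*(\lambda)$ and the separable minimization over each $\xi_i$ produces $-r_i^*(-\Theta_i^\intercal\lambda)$. Strong duality holds because $\phi$ and each $r_i$ are convex and the constraint is affine (Slater-type conditions are automatic for affine equality constraints), so Problem \ref{prob:feature} is equivalent to the centralized dual problem
\begin{equation*}
\min_{\lambda\in\RR^N}\ \phi^*(\lambda) + \sum_{i=1}^n r_i^*(-\Theta_i^\intercal\lambda).
\end{equation*}

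Next, to recast this in the distributed form claimed in the proposition, I would introduce at each agent $i$ a local copy $x_i\in\RR^N$ of $\lambda$ and impose the pairwise consensus constraints $x_i = x_j$ for all $\{i,j\}\in\cE$. Since any feasible point satisfies $x_1=\cdots=x_n$ on the connected graph $\cG$, we may uniformly split the coupling term as $\phi^*(\lambda) = \sum_{i=1}^n \phi^*(x_i)/n$ without changing the optimal value, yielding
\begin{equation*}
\min_{x}\ \sum_{i=1}^n \bigl[r_i^*(-\Theta_i^\intercal x_i) + \phi^*(x_i)/n\bigr] \quad \text{s.t.}\quad x_i = x_j \text{ for all } \{i,j\}\in\cE.
\end{equation*}
By Assumption \ref{ass:consensus} and the discussion following it, pairwise consensus on a connected $\cG$ is equivalent to $[(I_n - Z)\otimes I_N]x = 0$, reproducing the constraint stated in the proposition (with $I_d$ in the statement read dimension-consistently as $I_N$, since $x_i\in\RR^N$).

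The main obstacle is the rigorous justification of strong duality in the first step: since the proposition makes no explicit coercivity or regularity hypotheses on $\phi$ and the $r_i$'s beyond convexity, one must either cite a Fenchel–Rockafellar theorem applicable to convex functions with an affine linking constraint (where Slater is vacuous) or add a mild qualification such as $\phi$ proper lower semicontinuous and $\mathrm{dom}\,\phi$ intersecting the range $\sum_i\Theta_i\,\mathrm{dom}\,r_i$ in its relative interior. Once zero duality gap is secured, the remaining two transformations (introduction of local copies with the $1/n$ splitting, and rewriting pairwise consensus as $Wx=0$) are purely bookkeeping, and the stated equivalence follows.
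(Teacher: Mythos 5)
Your proof is correct, but it takes a genuinely different route from the paper's. The paper introduces $n$ auxiliary variables $z_i$ with $n$ separate constraints $z_i=\Theta_i\xi_i$, each carrying its own multiplier $x_i\in\RR^N$; the consensus constraint $x_1=\cdots=x_n$ then \emph{emerges} from the dualization itself, because $\min_{z_1,\ldots,z_n}\big[\phi(\sum_i z_i)-\sum_i x_i^\intercal z_i\big]$ equals $-\phi^*(x_1)$ when all multipliers coincide and $-\infty$ otherwise. You instead introduce a single variable $z=\sum_i\Theta_i\xi_i$ with one multiplier $\lambda$, obtain the centralized dual $\min_\lambda\phi^*(\lambda)+\sum_i r_i^*(-\Theta_i^\intercal\lambda)$, and only then impose the distributed structure by hand via local copies and pairwise consensus. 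The paper's version is more economical in that each $x_i$ is naturally owned by agent $i$ as the multiplier of its local constraint, and the consensus set is forced rather than postulated; your version is more modular (a standard Fenchel dual step followed by a standard consensus reformulation) and, notably, makes explicit the $1/n$ splitting of $\phi^*$ across agents on the consensus set --- a step the paper's proof actually glosses over, since its dual objective contains only $\phi^*(x_1)$ while the proposition states $\sum_i\phi^*(x_i)/n$. Both arguments invoke strong duality for a convex objective with affine linking constraints at the same level of formality (the paper simply cites Slater), so your caveat about a constraint qualification applies equally to both; and your observation that the Kronecker factor should be $I_N$ rather than $I_d$ for dimensional consistency is a fair catch. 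No gap in either route.
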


Proposition \ref{prop:feature} shows the equivalence between Problem \ref{prob:feature} and a form of Problem \ref{prob:constrained}, which is equivalent to Problem \ref{prob:dual}. This suggests that if the gradients (and Hessians) of conjugates $\phi^*$ and $r_i^*$ can be computed efficiently in practice (e.g., by a closed form or polynomial-time algorithms), we can apply DISH to solve the corresponding dual problem in Proposition \ref{prop:feature} instead of the original one in \eqref{prob:feature}. Here 
is an example of when the conjugates can be easily computed.
\begin{example}
Suppose that $\omega\in\RR^{N}$ and $\omega_i\in\RR^{d_i}$, and quadratic functions $\phi(\omega) = \omega^\intercal U\omega/2 + u^\intercal \omega$ and $r_i(\omega_i) = \omega_i^\intercal V_i\omega_i/2 + v_i^\intercal \omega_i$ with $U\in\RR^{N\times N}\succ 0$ and $V_i\in\RR^{d_i\times d_i}\succ 0$ for $i\in\cN$. It is easy to compute the conjugates $\phi^*$ and $r_i^*$ and obtain the dual problem in Proposition \ref{prop:feature} that $\min_x\sum_{i=1}^n[(\Theta_i^\intercal x_i-v_i)^\intercal V_i^{-1}(\Theta_i^\intercal x_i-v_i) + (x_i-u)^\intercal U^{-1}(x_i-u)/n]/2$ such that $[(I_{n} - Z)\otimes I_d] x = 0$. In DISH, we have $P_i^k=Q_i^k=I_N$ for gradient-type updates, and $(P_i^k)^{-1}=Q_i^k =\Theta_iV_i^{-1}\Theta_i^\intercal + U^{-1}/n + \mu I_N$ for Newton-type updates. Thus, when $r_i(\omega_i)$ is the $l_2$ regularizer with $V_i=\chi I_{d_i}$ and $\chi>0$, and the number of samples $N$ is relatively small, we can compute the Newton-type updates efficiently. 
\end{example}

\subsection{Network Flow Optimization Problems}
We now study nonlinear network flow optimization problems over multi-agent networks. We first present the problem setting and its equivalent structured minimax formulation. 
\subsubsection{Problem Formulation}
We recall that agent $i$ locates at node $i$ in the network. In a network flow problem, we define $x\in\RR^{|\cE|}$ as the decision variable with entries $x_{ij}$ for $\{i,j\}\in\cE$. For a convention, 
we ask agent $i$ to control the flow $x_{ij}$ for any $j>i$, and we use  $x_{ij}$  for $i<j$  to denote the directed flow from node $i$ to node $j$. Let $\pi\in\RR^n$ be a given supply vector with entries $\pi_i$ the external supply (demand) when $\pi_i>0$ ($\pi_i<0$) at agent $i$. We assume $\sum_{i\in\cN} \pi_i = 0$ to ensure the total supply equals the total demand over the system. We suppose the cost function is separable in terms of edges with the form $f^{\sf nf}(x) = \sum_{\{i,j\}\in\cE} f_{ij}(x_{ij})$, where $f_{ij}:\RR\to \RR$ is the cost at edge $\{i,j\}$. We study a separable \emph{network flow optimization problem} \cite{bertsekas1989parallel} with the flow balance constraint,
\#\label{prob:nf-original}
    &\min_{x\in \RR^{|\cE|}} \sum_{\{i,j\}\in\cE} f_{ij}(x_{ij}), \\ 
    &\text{ \ s.t. } \sum_{j\colon \{i,j\}\in\cE, j>i} x_{ij} - \sum_{j\colon \{i,j\}\in\cE, j<i} x_{ij} = \pi_i, \ \forall i\in\cN.\notag
\#
By summing up the constraints over all $i\in\cN$, we verify that $\sum_{i\in\cN} \pi_i=0$ as required before. Let $E\in\RR^{n\times |\cE|}$ denote the node-edge incidence matrix with entries $E_{i,\{i,j\}} = 1$ and $E_{j,\{i,j\}} = -1$ if $i<j$ and $E_{k,\{i,j\}} = 0$ if $k\neq i,j$ for $\{i,j\}\in\cE$. We remark that $\ker(E^\intercal)=\text{span}\{\mathds{1}_n\}$ and the Laplacian matrix of $\cG$ can be represented as $EE^\intercal\in\RR^{n\times n}$. For compactness, we rewrite Problem \ref{prob:nf-original} as follows,
\#\label{prob:nf}
\min_{x\in \RR^{|\cE|}} f^{\sf nf}(x), \quad \text{s.t. } Ex=\pi.
\#
Since $\text{im}(E) = \ker(E^\intercal)^{\perp} = \text{span}\{\mathds{1}_n\}^{\perp}$ and $\mathds{1}_n^\intercal \pi=0$, we have $\pi\in\text{im}(E)$. Thus, there exists a feasible $x^{\sf nf}$ to the above problem such that $Ex^{\sf nf}=\pi$.
We impose the following assumption on $f^{\sf nf}$. 
\begin{assumption}\label{ass:nf}
    Function $f^{\sf nf}(x)$ is twice differentiable, $m_{\sf nf}$-strongly convex, and $\ell_{\sf nf}$-Lipschitz smooth with constants $0<m_{\sf nf}\le \ell_{\sf nf}$.
\end{assumption}
Let $y=(y_1;\cdots;y_n)\in\RR^n$ be the dual variable with $y_i$ associated with the constraint $[Ex]_i=\pi_i$ at agent $i$. To solve Problem \ref{prob:nf} with the flow balance constraint, we define the Lagrangian $L^{\sf nf}$ as follows,
\#\label{eq:Lagrangian-nf}
L^{\sf nf}(x,y) = f^{\sf nf}(x) + y^\intercal(Ex-\pi).
\#
By the convexity of $f^{\sf nf}$ and Slater's condition, strong duality holds. Thus, Problem \ref{prob:nf} is equivalent to the following minimax problem, which we refer to as Problem \ref{prob:nf-minimax},
\#\tag{NF}\label{prob:nf-minimax}
\max_{y\in\RR^{n}} \psi^{\sf nf}(y), \text{ where } \psi^{\sf nf}(y) = \min_{x\in\RR^{|\cE|}} L^{\sf nf}(x, y).
\#
As will illustrate after Assumption \ref{ass:Hessian}, given any $y\in\RR^p$, $L^{\sf nf}(\cdot,y)$ is strongly convex with a unique minimizer.

\subsubsection{Distributed Hybrid Methods for Network Flow}
We propose distributed hybrid methods for  solving Problem \ref{prob:nf-minimax}. The hybrid method allows various updating types for primal variables at each iteration. By substituting $\nabla_x L^{\sf nf} = \nabla f^{\sf nf}(x) + E^\intercal y$ and $\nabla_y L^{\sf nf}=Ex - \pi$, the compact form of the distributed hybrid method performs as follows,
\#\label{eq:nf-update}
& x^{k+1} = x^k - AP^k(\nabla f^{\sf nf}(x^k) + E^\intercal y^k), \notag\\ 
& y^{k+1} = y^k + BQ^k(Ex^k - \pi), 
\#
where $A = \diag\{a_{ij}\}\in\RR^{|\cE|\times|\cE|}$ and $B = \diag\{b_i\}\in\RR^{n\times n}$ consist of positive stepsizes $a_{ij}$ for $\{i,j\}\in\cE$ and $b_i$ for $i\in\cN$, and $P^k = \diag\{p_{ij}^k\}\in\RR^{|\cE|\times|\cE|}$ and $Q^k = \diag\{q_i^k\}\in\RR^{n\times n}$ consist of positive scaling values $p_{ij}^k$ for $\{i,j\}\in\cE$ and $q_i^k$ for $i\in\cN$. The scaling values here serve more like personalized stepsizes for each variable and each iteration. Here are examples of possible gradient-type and Newton-type scaling values. Let $\cJ_1^k = \{\{i,j\}\in \cE \colon \text{$x_{ij}$ takes gradient-type updates at iteration $k$}\}$ and $\cJ_2^k = \{\{i,j\}\in \cE \colon \text{$x_{ij}$ takes Newton-type updates at $k$}\}$. We take
\#\label{eq:primal_p_nf}
& \textit{Primal:}\quad \text{Gradient-type, }p_{ij}^k=1; \\
& \qquad \text{Newton-type, } p_{ij}^k = (\nabla^2 f_{ij}(x_{ij}^k))^{-1}, \ \forall \{i,j\}\in\cE.  \notag\\
& \textit{Dual:} \quad  q_i^k = \big[| \{j\colon \{i,j\}\in\cJ_1^k\}| \notag\\
&\qquad \qquad \quad + \sum_{j\colon \{i,j\}\in \cJ_2^k} (\nabla^2 f_{ij}(x_{ij}^k))^{-1} \big]^{-1}, \ \forall i\in\cN. \notag
\#
We will illustrate the scalings in \eqref{eq:primal_p_nf} in the next subsection. Other choices of positive scaling values uniformly bounded over $k$ can also work. Algorithm \ref{alg:nf} shows the distributed implementation of \eqref{eq:nf-update}. It consists of primal (Lines \ref{line:primal-start} - \ref{line:primal-nf}) and dual steps (Line \ref{line:dual-nf}) for each agent. The primal step on $x_{ij}$ using $p_{ij}^k$ in \eqref{eq:primal_p_nf} reflects the flow on edge $\{i,j\}\in\cE$ and is updated using either gradient-type or Newton-type information of its local edge objective $f_{ij}$ along with the two end points' dual variables. The dual step on $y_i$ using $q_i^k$ given by \eqref{eq:primal_p_nf}  corresponds to the flow balance constraint at node $i$ and uses all the primal information from its neighboring edges $x_{ij}$.

\begin{algorithm}[ht] 
    \caption{Distributed Hybrid Method for Network Flow Optimization}
    \begin{algorithmic}[1]\label{alg:nf}
    \STATE{\textbf{Input:} Initialization $x_{ij}^0,y_i^0\in\RR$ and stepsizes $a_{ij},b_i\in \RR^+$ for $\forall i\in \cN$ and $\forall \{i,j\}\in \cE$, respectively.}  
    \FOR{$k =  0, \ldots, K-1$} 
        \FOR{each agent $i\in \cN$ in parallel}
        \STATE{Send values $x_{ij}^{k}$ and $y_i^{k}$ (and $(\nabla^2 f_{ij}(x_{ij}^k))^{-1}$ if $\{i,j\}\in\cJ_2^k$) to $i$'s neighbor $j$;}
        \STATE{Choose its local scale values $p_{ij}^k$ and $q_i^k$;}
            \FOR{each neighbor $j$ ($j$ such that $\{i,j\}\in\cE$) satisfying $j>i$ in parallel}\label{line:primal-start}
           \STATE{$
            x_{ij}^{k+1} = x_{ij}^k - a_{ij}p_{ij}^k(\nabla f_{ij}(x_{ij}^k) + y_i^{k} - y_j^k );$ } \ENDFOR\label{line:primal-nf}
           \STATE{$ y_i^{k+1} =  y_i^k + b_iq_i^k(\sum_{j\colon \{i,j\}\in\cE, j>i} x_{ij}^k - \sum_{j\colon \{i,j\}\in\cE, j<i} x_{ij}^k - \pi_i)$;}\label{line:dual-nf}
        \ENDFOR
    \ENDFOR
\end{algorithmic}
\end{algorithm}

\subsubsection{Special Cases of Algorithm \ref{alg:nf}} \label{sect:nf_nda}
We now illustrate the update choices provided in \eqref{eq:primal_p_nf}. We begin with the two extreme cases with all gradient or Newton-type edges. First, when all edges take gradient-type updates, \eqref{eq:primal_p_nf} implies that $P^k=I_{|\cE|}$ and $BQ^k = \diag\{b_i/\deg(i)\}$ for any $k$ in \eqref{eq:nf-update}. It recovers GDA with personalized stepsizes for $y_i$. 

Next, we consider the case when all edges take Newton-type updates for a speedup.
We have $q_i^k = \sum_{j\colon \{i,j\}\in\cE} (\nabla^2 f_{ij}(x_{ij}^k))^{-1}$ for $i\in\cN$ in \eqref{eq:primal_p_nf} and
\#\label{eq:nf-all-Newton}
P^k=(\nabla^2 f^{\sf nf}(x^k))^{-1} \text{ and } Q^k= \diag\{q_i^k\}.
\# 
We now study both the primal and the dual updates and show that Algorithm \ref{alg:nf} approximates NDA by a diagonalized dual Hessian in this particular case.

\vskip4pt
\noindent\textit{Primal Updates.} The primal Newton's step for solving the inner problem $\min_{x} L^{\sf nf}(x, y)$ in \eqref{prob:nf-minimax} at iteration $k$ is 
\$
x^{k+1} = x^k - \big(\nabla_{xx}^2 L^{\sf nf}(x^k, y^k)\big)^{-1}\nabla_{x} L^{\sf nf}(x^k, y^k).
\$
By substituting $\nabla^2_{xx} L^{\sf nf}(x^k, y^k)= \nabla^2 f^{\sf nf}(x^k)$ and $\nabla_{x} L^{\sf nf}(x^k, y^k)= \nabla f^{\sf nf}(x^k) + E^\intercal y^k$ in NDA, it recovers the primal Newton's step with $P^k$ given in \eqref{eq:nf-all-Newton}.

\vskip4pt
\noindent\textit{Dual Updates.} 
We now consider $y$'s (dual) Newton's update for $\max_{y} \psi^{\sf nf}(y)$ at iteration $k$. We replace $x^*(y^k)$ by the current primal iterate $x^k$ and define $\widehat{\nabla}\psi^{\sf nf}(y^k)$ and $\widehat{\nabla}^2\psi^{\sf nf}(y^k)$ as estimators of ${\nabla}\psi^{\sf nf}(y^k)$ and ${\nabla}^2\psi^{\sf nf}(y^k)$ due to the lack of the exact minimizer $x^*(y^k)$, and obtain 
\$
& \widehat{\nabla}\psi^{\sf nf}(y^k) = Ex^k - \pi, \\
& \widehat{\nabla}^2\psi^{\sf nf}(y^k) = -N^{\sf nf}(x^k,y^k) = -E [\nabla^2 f^{\sf nf}(x^k)]^{-1} E^\intercal.
\$
We remark that $\widehat{\nabla}^2\psi^{\sf nf}(y^k)$ is not full-rank due to the matrix $E$. Let $\Delta y^k$ be dual Newton's update that $y^{k+1} = y^k + \Delta y^k$ defined by $\widehat{\nabla}^2\psi^{\sf nf}(y^k) \Delta y^k = -\widehat{\nabla}\psi^{\sf nf}(y^k)$. Then it satisfies \$E [\nabla^2 f^{\sf nf}(x^k)]^{-1} E^\intercal \Delta y^k = Ex^k - \pi.\$
Since the dual Hessian $E [\nabla^2 f^{\sf nf}(x^k)]^{-1} E^\intercal$ is inseparable, we approximate it by its diagonal part to design a distributed method. We recall that the Laplacian matrix of $\cG$ is $EE^\intercal = D-A^{\sf adj}$, where $D$ is the degree matrix with diagonal entries $D_{ii} = \deg(i)$ for $i\in\cN$ and $0$ otherwise, and $A^{\sf adj}$ is the adjacency matrix with entries $A^{\sf adj}_{ij} = 1$ if $\{i,j\}\in\cE$ and $0$ otherwise. Inspired by this, we split by $E [\nabla^2 f^{\sf nf}(x^k)]^{-1} E^\intercal = D^{\sf nf}(x^k) - A^{\sf nf}(x^k)$, where $D^{\sf nf}(x^k)$ is diagonal with $[D^{\sf nf}(x^k)]_{ii} = \sum_{j\colon \{i,j\}\in\cE} (\nabla^2 f_{ij}(x_{ij}^k))^{-1}$ for $i\in\cN$ and $A^{\sf nf}(x^k)$ has entries $[A^{\sf nf}(x^k)]_{ij} = (\nabla^2 f_{ij}(x_{ij}^k))^{-1}$ if $\{i,j\}\in\cE$ and $0$ otherwise. We approximate $E [\nabla^2 f^{\sf nf}(x^k)]^{-1} E^\intercal$ by its diagonal part $D^{\sf nf}(x^k)$ to obtain a distributed scheme. 
We note that $Q^k = [D^{\sf nf}(x^k)]^{-1}$ in \eqref{eq:nf-all-Newton}. Thus, the dual Newton-type updates with $Q^k$ in \eqref{eq:nf-all-Newton} estimate Newton's steps by adopting the diagonalized Hessian.

We further discuss the updates $P^k$ and $Q^k$ provided in \eqref{eq:primal_p_nf} when the system has both gradient-type and Newton-type edges. The diagonal matrix $P^k$ denotes whether the local update is gradient-type ($p_{ij}^k=1$) or Newton-type ($p_{ij}^k=(\nabla^2 f_{ij}(x_{ij}^k))^{-1}$). Moreover, similar to the Newton-type dual updates, $(Q^k)^{-1}$ in \eqref{eq:primal_p_nf} takes the diagonal part of the matrix $EP^kE^\intercal$ to utilize the primal gradient or Hessian information from adjacent edges as defined in $P^k$.
In summary, Algorithm \ref{alg:nf} 
provides a flexible distributed method when there are both gradient-type and Newton-type edges in the system.

\section{GRAND: Gradient-Related Ascent and Descent Algorithm} \label{sect:alg-GRAND}
Recall that Problems \ref{prob:dual} and \ref{prob:nf-minimax} are in the minimax form of Problem \ref{prob:L}.  Thus, to analyze the performance of our distributed hybrid methods with general update directions, we analyze generalized methods for solving Problem \ref{prob:L}.

\subsection{GRAND} 
We introduce the \emph{\underline{g}radient-\underline{r}elated \underline{a}scent a\underline{n}d \underline{d}escent} (GRAND) algorithmic framework in Algorithm \ref{alg:grand} for solving minimax problems. GRAND presents a generalization of the distributed hybrid methods proposed in Algorithms \ref{alg:dish} and \ref{alg:nf}. In Algorithm \ref{alg:grand}, constants $\alpha$ and $\beta$ are
stepsizes and vectors $s^k$ and $t^k$ are $x$-descent and $y$-ascent update directions, respectively.
\begin{algorithm}
    \caption{GRAND: Gradient-Related Ascent and Descent.}
    \begin{algorithmic}[1]\label{alg:grand}
    \STATE{Input: $\alpha>0$, $\beta>0$, $x^0\in\RR^d$, and $y^0\in\RR^p$. }
    \FOR{$k =  0, \cdots, K-1$} 
    \STATE{Take $s^k\in\RR^d$ and $t^k\in\RR^p$ satisfying Assumption \ref{ass:descent_direct} (or $s^k\in\RR^d$ under Assumption \ref{ass:descent_direct-alt} for Alt-GRAND)}
        \STATE{$x^{k+1} = x^k - \alpha s^k$, }
         \STATE{(Take $t^k\in\RR^p$ under Assumption \ref{ass:descent_direct-alt} for Alt-GRAND)}
        \STATE{$y^{k+1} = y^k + \beta t^k$. }
    \ENDFOR
\end{algorithmic}
\end{algorithm}
GRAND generalizes the gradient descent ascent method (GDA) by allowing updates $s^k$ and $t^k$ to be within uniformly bounded acute angles to the partial gradients. We state the formal assumptions as follows.

\begin{assumption}\label{ass:descent_direct}
There are positive constants $\gamma_s$, $\gamma_t$, $\Gamma_s$, and $\Gamma_t$ such that for any $k$, the updates $s^k$ and $t^k$ satisfy
\$
&\|s^k\| \ge \sqrt{\gamma_s\Gamma_s} \|\nabla_x L(x^k,y^k)\|,\\
& (s^k)^\intercal\nabla_x L(x^k,y^k) \ge \|s^k\|^2/\Gamma_s, \\
&\|t^k\| \ge \sqrt{\gamma_t\Gamma_t} \|\nabla_y L(x^k,y^k)\|,\\
& (t^k)^\intercal\nabla_y L(x^k,y^k) \ge \|t^k\|^2/\Gamma_t.
\$
\end{assumption}
Assumption \ref{ass:descent_direct} is inspired by the gradient-related descent methods for solving minimization problems \cite{bertsekas1989parallel}. For the $x$-update $s^k$, the first condition implies that $s^k\neq 0$ and thus $x^{k+1}\neq x^k$ whenever $\nabla_x L(x^k, y^k) \neq 0$, and the second condition ensures that $-s^k$ is a descent direction with an acute angle to $\nabla_x L(x^k, y^k)$. Similarly, $t^k$ is an ascent direction along $\nabla_y L(x^k, y^k)$. We will provide a general convergence analysis of GRAND in Section \ref{sect:theorem}. GRAND is a general framework that includes some important specific methods. We first note that GDA is a special case of GRAND. 

\vskip4pt
 \emph{GDA.} If we take $s^k = \nabla_x L(x^k, y^k)$ and $t^k = \nabla_y L(x^k, y^k)$ for all $k$, Algorithm \ref{alg:grand} recovers GDA with $\gamma_s = \Gamma_s = \gamma_t = \Gamma_t = 1$ in Assumption \ref{ass:descent_direct}. 

    Besides the gradient method, the gradient-related directions also enable methods adopting scaled gradients, Newton's updates, or quasi-Newton updates. These methods can potentially improve the local numerical performance. 

\vskip4pt
\emph{Scaled Gradient Descent Ascent Method.} Algorithm \ref{alg:grand} leads to the scaled gradient method when $s^k = P^k\nabla_x L(x^k, y^k)$ and $t^k = Q^k\nabla_y L(x^k, y^k)$ with positive definite scaling matrices $P^k$ and $Q^k$. We assume uniformly bounded eigenvalues of $P^k$ and $Q^k$ such that $\sqrt{\gamma_s\Gamma_s} I_d \preceq P^k \preceq \Gamma_s I_d$ and $\sqrt{\gamma_t\Gamma_t} I_p \preceq Q^k \preceq \Gamma_t I_p$ for all $k$ to satisfy Assumption \ref{ass:descent_direct}. 

The scalings $P^k$ and $Q^k$ provide flexibility when designing distributed methods. They can help the system mimic Newton's update and improve numerical performance. In particular, our distributed hybrid methods proposed in Algorithms \ref{alg:dish} and \ref{alg:nf} are special cases of GRAND with scaled gradient updates. 

Moreover, if $P^k=P$ and $Q^k=Q$ are constant matrices, they are also known as the preconditioners. Preconditioners are shown to be crucial in practice when training GANs \cite{wang2019solving}. 

\vskip4pt    
\emph{Newton-type Descent Ascent Method (NDA).}
Algorithm \ref{alg:grand} is a Newton-type method when 
    $s^k = [\nabla_{xx}^2L(x^k,y^k)]^{-1}\nabla_x L(x^k, y^k)$ and $t^k = [N(x^k,y^k)]^{-1}\nabla_y L(x^k, y^k)$. Here $N(x^k,y^k)$ estimates the Hessian $-\nabla^2\psi(y^k)$ by replacing $x^*(y^k)$ with $x^k$. In this method, $x$ takes a Newton's step along $\nabla_x L(x^k, y^k)$ and moves towards $x^*(y^k)$, and $y$ mimics the Newton's step $-[\nabla^2\psi(y^k)]^{-1}\nabla \psi(y^k)$ to maximize $\psi(y)$.
    
    Assumption \ref{ass:descent_direct} holds for $s^k$ with $\Gamma_s = 1/m_{\sf x}$ and $\sqrt{\gamma_s\Gamma_s} = 1/\ell_{\sf xx}$ under Assumption \ref{ass:Hessian}. Moreover, it holds for $t^k$, if there exists a constant $\varrho>0$ such that $N(x,y) \succ \varrho I_p$ for any $(x,y)$. In this case, we have $\Gamma_t = 1/\varrho$ and $\sqrt{\gamma_t\Gamma_t} = 1/(\ell_{\sf yx}\ell_{\sf xy}/m_{\sf x} + \ell_{\sf yy})$. Such a condition is not restrictive. For example, when there is $m_{\sf y}>0$ such that $L(x, y)$ is $m_{\sf y}$-strongly concave with respect to $y$, we have $\nabla^2_{yy}L(x, y)\preceq -m_{\sf y} I_p$. 
    If we further assume the continuity of $\nabla_{yx}^2 L(x,y)$, we have $\nabla_{yx}^2 L(x,y) = (\nabla_{xy}^2 L(x,y))^\intercal$ by Clairaut's theorem. Thus, we have $\nabla_{yx}^2 L(x,y) [\nabla_{xx}^2L(x,y)]^{-1}\nabla_{xy}^2L(x,y) \succeq 0_p$ since $[\nabla_{xx}^2L(x,y)]^{-1} \succeq 1/\ell_{\sf xx} \cdot I_d$. In this case, we can take $\varrho = m_{\sf y}/2$ and Assumption \ref{ass:descent_direct} holds for $N$ defined in \eqref{eq:N-operator}. 

\vskip4pt  
    \emph{Quasi-Newton-type Descent Ascent Method.} The aforementioned scalings $P^k$ and $Q^k$ can also be quasi-Newton updates, like (L)-BFGS matrices. For example, PD-QN \cite{eisen2019primal}, the distributed primal-dual quasi-Newton method for consensus problems is a special case of GRAND under Assumption \ref{ass:descent_direct}.

\subsection{Alternating GRAND} \label{sect:grand_alt}
We now introduce Alt-GRAND as an alternating version of GRAND, where the updates for $x$ and $y$ are performed sequentially (Gauss-Seidel updates \cite{bertsekas1989parallel}) instead of simultaneously (Jacobi updates). Alt-GRAND adopts the updates in Algorithm \ref{alg:grand} with a different assumption that the $y$-update $t^k$ is along the alternating partial gradient using the updated $x^{k+1}$.
Formally, we present the following assumption. 
\begin{assumption}[Alt-GRAND]\label{ass:descent_direct-alt} 
There are positive constants $\gamma_s$, $\gamma_{\tau}$, $\Gamma_{s}$ and $\Gamma_{\tau}$ such that $s^k$ and $t^k$ in Algorithm \ref{alg:grand} satisfy
$
\|s^k\| \ge \sqrt{\gamma_s\Gamma_s} \|\nabla_x L(x^k,y^k)\|$, $(s^k)^\intercal\nabla_x L(x^k,y^k) \ge \|s^k\|^2/\Gamma_s$, 
$\|t^k\| \ge \sqrt{\gamma_{\tau}\Gamma_{\tau}} \|\nabla_y L(x^{k+1},y^k)\|$,  and $(t^k)^\intercal\nabla_y L(x^{k+1},y^k) \ge \|t^k\|^2/\Gamma_{\tau}$.
\end{assumption}
In Alt-GRAND, $s^k$ satisfies the same conditions as in GRAND, while $t^k$ is an ascent direction along the updated $\nabla_y L(x^{k+1}, y^k)$ instead of $\nabla_y L(x^k, y^k)$. Alt-GRAND is a generalization of Alt-GDA, which has been shown to outperform GDA numerically in some cases \cite{bailey2020finite}. We analyze its convergence in Section \ref{sect:thm-Alt-GRAND}, and compare its numerical performance with GRAND in Section \ref{sect:exp-centralized}.

Alt-GRAND allows scaled implementations if $s^k = P^k\nabla_x L(x^k, y^k)$ and $t^k = Q^k\nabla_y L(x^{k+1}, y^k)$ with positive definite matrices $P^k$ and $Q^k$ satisfying $\sqrt{\gamma_s\Gamma_s} I_d \preceq P^k \preceq \Gamma_s I_d$ and $\sqrt{\gamma_\tau\Gamma_\tau} I_p \preceq Q^k \preceq \Gamma_\tau I_p$. Newton-type methods are also covered by Alt-GRAND. For example, GDN \cite{zhang2020newton} is a special case when $P^k=\nabla_{xx}^2 L(x^k, y^k)$ and $Q^k = I_p$, which has a provable local linear rate. The 
alternating Newton-type method (Alt-NDA), on the other hand, takes $P^k = [\nabla_{xx}^2L(x^k,y^k)]^{-1}$ and $Q^k = [N(x^{k+1},y^k)]^{-1}$, similar to NDA. Assumption \ref{ass:descent_direct-alt} holds for $t^k$ under similar conditions as in NDA. Alt-NDA, also known as the complete Newton method \cite{zhang2020newton}, has a provable local quadratic rate. We will discuss further the local performance of Alt-NDA in Section \ref{sect:local-AltNDA}.

\section{Global Convergence Analysis} \label{sect:global}
In this section, we analyze the global convergence of GRAND. Theorem \ref{thm:PL} establishes the linear convergence of GRAND under certain strongly-convex-PL conditions, which ensures the linear rate of the distributed hybrid methods.

\subsection{Preliminaries} 
We first introduce assumptions and definitions used throughout the section, starting with the standard conditions for $L$.
\begin{assumption}
\label{ass:Hessian}
The function $L(x,y)$ satisfies that,
\begin{enumerate}
    \item[(a)] $L$ is twice differentiable in
    $(x, y)$. Its partial gradient $\nabla_x L$ is continuously differentiable relative to $(x,y)$; 
    \item[(b)] Given any $y\in\RR^p$, $L(\cdot, y)$ is $m_{\sf x}$-strongly convex with respect to $x$ with $m_{\sf x}>0$;
    \item[(c)] The partial gradient  $\nabla_x L$ is $\ell_{\sf xx}$- and $\ell_{\sf xy}$-Lipschitz continuous in $x$ and $y$, respectively. Moreover, $\nabla_y L$ is $\ell_{\sf yx}$- and $\ell_{\sf yy}$-Lipschitz continuous in $x$ and $y$, respectively. Here, constants $\ell_{\sf xx}>m_{\sf x}>0$, and $\ell_{\sf xy},\ell_{\sf yx},\ell_{\sf yy}\ge0$.
\end{enumerate}
\end{assumption}
It is easy to check that $L^{\sf dc}$ defined in \eqref{eq:lagrangian_func} under Assumption \ref{ass:Hessian-dist} satisfies Assumption \ref{ass:Hessian} with $m_{\sf x}=m_{\sf dc}$, $\ell_{\sf xx} = \ell_{\sf dc} + 2\mu$, $\ell_{\sf xy} = \ell_{\sf yx} =2$, and $\ell_{\sf yy} = 0$. Moreover, $L^{\sf nf}$ defined in \eqref{eq:Lagrangian-nf} under Assumption \ref{ass:nf} satisfies Assumption \ref{ass:Hessian} with $m_{\sf x}=m_{\sf nf}$, $\ell_{\sf xx} = \ell_{\sf nf}$, $\ell_{\sf xy} = \ell_{\sf yx} = \|E\|$, and $\ell_{\sf yy} = 0$. 

Most existing analyses of GDA in strongly-convex-concave settings study linear combinations of $\|x^k-x^\star\|^2$ and $\|y^k-y^\star\|^2$ as Lyapunov functions \cite{du2019linear}, where $(x^\star, y^\star)$ is a solution to Problem \ref{prob:L}. Let $z^\star = (x^\star; y^\star)$ and $z^k = (x^k; y^k)$. 
The gradient steps in GDA decrease the Lyapunov function by a ratio such that ~$\|z^{k+1}-z^\star\|_{V}^2\le \rho\|z^k-z^\star\|_{V}^2$ for a matrix $V\succeq 0$ and a constant $0<\rho<1$ at iteration $k$, implying a linear rate. However, such analysis does not apply to GRAND due to the time-varying angles between the updates and the gradients. 
A similar procedure leads to ~$\|z^{k+1}-z^\star\|_{V^k}^2\le \rho\|z^k-z^\star\|_{V^k}^2$ with time-varying matrices $\{V^k\succeq 0\}_k$, which does not ensure convergence. Moreover, such Lyapunov functions also fail in nonconcave 
cases. Thus, recalling $x^*(y)$ and $\psi(y)$ defined in \eqref{eq:x_star_y}, we introduce two performance metrics, $y$'s optimality measure and $x$'s tracking error, with $\Xi_\psi = \max_y \psi(y)$,
\#\label{eq:terrors}
& \Delta_{y}^k = \Xi_\psi - \psi(y^k), \notag\\
& \Delta_{x}^k = L(x^k, y^k) - L(x^*(y^k),y^k).
\#
We remark that $\Delta_{x}^k$ and $\Delta_{y}^k$ are nonnegative by definition. Here $\Delta_{y}^k$ measures the distance between $y$'s current function value to 
its upper bound, and $\Delta_{x}^k$ tracks the error of $x$'s current function value to the optimal one at the current $y^k$ point.  We take $\Xi_\psi=\psi(y^\star)$ when $\psi$ has a maximizer $y^\star$. In this case, $\Delta_y^k$ is $y$'s optimality gap and becomes zero at the optimal point $(x^\star, y^\star)=(x^*(y^\star), y^\star)$. We will define Lyapunov functions as linear combinations of these performance metrics. 

\subsection{Global Convergence of GRAND} \label{sect:theorem}
This section analyzes the global convergence of GRAND under Assumption \ref{ass:descent_direct}. We first define some constants used in the analysis. When $\gamma_t<\Gamma_t$, let $\nu=1/\sqrt[3]{1-\gamma_t^2/\Gamma_t^2} - 1>0$. 
We define positive constants $c_1$, $\ell_\psi$, $\iota$, and $c_2$ as follows,
\#\label{eq:constants_c}
& c_1 = (\Gamma_t^2/2\gamma_t)[{\nu}\II_{\{\gamma_t<\Gamma_t\}}/(1+\nu) + \II_{\{\gamma_t=\Gamma_t\}}], \\
& \ell_\psi = \ell_{\sf yy} + {\ell_{\sf yx}\ell_{\sf xy}}/{m_{\sf x}}, \quad
 \iota =  2\Gamma_t + \Gamma_t^2 + {c_1\ell_{\sf yy}}/(3\ell_\psi), \notag\\
 & c_2 = (\Gamma_t^2/2\gamma_t)\{[1/(1+\nu) + 1+ \nu]\II_{\{\gamma_t<\Gamma_t\}}/\nu + \II_{\{\gamma_t=\Gamma_t\}}\}. \notag
\#
Then we define functions $\Upsilon^k$ and $\Delta^k$ as combinations of $\Delta_y^k$ and $\Delta_x^k$ defined in \eqref{eq:terrors}. For $k=0,1,\cdots,K$, we have
\#\label{eq:Delta_Upsilon}
& \Upsilon^k = \beta\iota\|\nabla \psi(y^{k})\|^2 + (2\alpha\gamma_sm_{\sf x}/3)\Delta_{x}^k, \notag \\
& \Delta^k = (3\iota/c_1)\Delta_y^{k} + \Delta_x^{k},
\# 
We remark that $\Upsilon^k$ and $\Delta^k$ are nonnegative. The function $\Upsilon^k$ is the Lyapunov function measuring the performance of GRAND for strongly-convex-nonconcave problems, while $\Delta^k$ is the Lyapunov function in the strongly-convex-PL setting.

\subsubsection{Strongly-Convex-Nonconcave Settings}
We present the sublinear convergence of GRAND in the following theorem.
\begin{theorem}[Strongly-Convex-Nonconcave] \label{thm:nonconvex}
Under Assumptions \ref{ass:descent_direct} and \ref{ass:Hessian}, with constants $c_1$, $c_2$, $\iota$, and $\ell_\psi$ defined in \eqref{eq:constants_c}, suppose the stepsizes satisfy
\$
\alpha \le {2\gamma_s}/\{\Gamma_s^2[3\ell_{\sf xx} + c_1\ell_{\sf yx}^2/(\ell_\psi\Gamma_t^2)]\}, \quad \beta\le \min\{{c_1}/{3\ell_\psi\Gamma_t^2}, {\alpha \gamma_s m_{\sf x}^2c_1}/{[3\ell_{\sf yx}^2\iota(3c_2 + 2c_1)]}\}.\$
Then with $\Upsilon^k$, $\Delta^0$, and $\Delta^K$ defined in \eqref{eq:Delta_Upsilon}, the iterates from Algorithm \ref{alg:grand} satisfy 
\$
\big(\sum_{k=0}^{K-1}\Upsilon^k\big)/K \le \Delta^0/K.
\$
\end{theorem}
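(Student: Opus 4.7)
The plan is to prove a per-iteration Lyapunov inequality $\Delta^{k+1}-\Delta^k\le -\Upsilon^k$, telescope it from $k=0$ to $K-1$, and drop the nonnegative tail $\Delta^K\ge 0$; dividing by $K$ then yields the claim. Because $\psi$ is nonconcave, the standard Lyapunov $\|y^k-y^\star\|^2$ used in strongly-convex-concave analyses is unavailable, so we instead track the suboptimality $\Delta_y^k=\Xi_\psi-\psi(y^k)$ together with the inner minimization error $\Delta_x^k$, and the two-timescale weight $3\iota/c_1$ in $\Delta^k$ is chosen to absorb the coupling between them.

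First, I would establish the $x$-side descent by decomposing $\Delta_x^{k+1}-\Delta_x^k=[L(x^{k+1},y^{k+1})-L(x^{k+1},y^k)]+[L(x^{k+1},y^k)-L(x^k,y^k)]-[\psi(y^{k+1})-\psi(y^k)]$. The middle bracket is the usual descent quantity: $\ell_{\sf xx}$-smoothness of $\nabla_x L(\cdot,y^k)$ together with the gradient-related conditions of Assumption \ref{ass:descent_direct} give $L(x^{k+1},y^k)-L(x^k,y^k)\le -(\alpha/\Gamma_s-\alpha^2\ell_{\sf xx}/2)\|s^k\|^2$, after which strong convexity of $L(\cdot,y^k)$ converts $\|s^k\|^2\ge\gamma_s\Gamma_s\|\nabla_x L(x^k,y^k)\|^2\ge 2\gamma_s\Gamma_s m_{\sf x}\Delta_x^k$ into a term of the form $-c_3\alpha\gamma_s m_{\sf x}\Delta_x^k$ for some constant $c_3>0$. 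The first bracket is handled by the $\ell_{\sf yy}$-Lipschitz continuity of $\nabla_y L$ in $y$ combined with $\|y^{k+1}-y^k\|=\beta\|t^k\|$. For the $y$-side, the $\ell_\psi$-smoothness of $\psi$ inherited from Lemma \ref{lem:psi-grad-hess} (with $\ell_\psi=\ell_{\sf yy}+\ell_{\sf yx}\ell_{\sf xy}/m_{\sf x}$) yields $\psi(y^{k+1})-\psi(y^k)\ge\beta(t^k)^\intercal\nabla\psi(y^k)-(\beta^2\ell_\psi/2)\|t^k\|^2$. Since Assumption \ref{ass:descent_direct} bounds $t^k$ against $\nabla_y L(x^k,y^k)$ rather than $\nabla\psi(y^k)=\nabla_y L(x^*(y^k),y^k)$, I would split $(t^k)^\intercal\nabla\psi(y^k)=(t^k)^\intercal\nabla_y L(x^k,y^k)-(t^k)^\intercal[\nabla_y L(x^k,y^k)-\nabla\psi(y^k)]$ and control the cross-term by Young's inequality, in which $\|x^k-x^*(y^k)\|^2\le 2\Delta_x^k/m_{\sf x}$ appears and contributes an additional $\Delta_x^k$ pollution term.

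Finally, I would form $\Delta^{k+1}-\Delta^k$ with the specified weight $3\iota/c_1$ on $\Delta_y$. The step-size bounds in the theorem are engineered so that the positive $\Delta_x^k$- and $\|t^k\|^2$-error terms arising from the $y$-ascent inequality are absorbed by the strict negative $x$-descent decrease in $\Delta_x^k$ and by $-\|t^k\|^2/\Gamma_t$ coming from the $y$-side itself; what remains is exactly $-\Upsilon^k=-\beta\iota\|\nabla\psi(y^k)\|^2-(2\alpha\gamma_s m_{\sf x}/3)\Delta_x^k$, as required. The main obstacle is the bookkeeping of the Young's-inequality split on the cross-term $(t^k)^\intercal[\nabla_y L(x^k,y^k)-\nabla\psi(y^k)]$: the optimal balance requires the cubic-root parameter $\nu=1/\sqrt[3]{1-\gamma_t^2/\Gamma_t^2}-1$, which explains the appearance of $c_1$, $c_2$, and the $\II_{\{\gamma_t<\Gamma_t\}}$ versus $\II_{\{\gamma_t=\Gamma_t\}}$ split in \eqref{eq:constants_c}, and converting the resulting lower bound from one involving $\|\nabla_y L(x^k,y^k)\|^2$ into one involving $\|\nabla\psi(y^k)\|^2$ after the split is what forces the precise value $\iota=2\Gamma_t+\Gamma_t^2+c_1\ell_{\sf yy}/(3\ell_\psi)$ and the two-timescale coupling $\beta\le\alpha\gamma_s m_{\sf x}^2 c_1/[3\ell_{\sf yx}^2\iota(3c_2+2c_1)]$ in the step-size condition.
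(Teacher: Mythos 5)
Your proposal follows essentially the same route as the paper's proof: the same coupled per-iteration bounds on $\Delta_y^{k+1}$ and $\Delta_x^{k+1}$ (Propositions \ref{prop:gL} and \ref{prop:Lg}), the same three-term decomposition of $\Delta_x^{k+1}$ with the exact identity $L(x^*(y^k),y^k)-L(x^*(y^{k+1}),y^{k+1})=\Delta_y^k-\Delta_y^{k+1}$, the $\ell_\psi$-smoothness of $\psi$ plus a Young's-inequality split (optimized via $\nu$) to handle the mismatch between $t^k$ and $\nabla\psi(y^k)$, the linear combination with weight $3\iota/c_1$, and telescoping with the nonnegative tail dropped. The only cosmetic difference is that the paper carries the cross-term pollution as $\|\nabla_x L(x^k,y^k)\|^2$ via Lemma \ref{lem:wx_sc} and converts it to $-\Delta_x^k$ only at the very end by strong convexity, whereas you convert to $\Delta_x^k$ immediately through $\|x^k-x^*(y^k)\|^2\le 2\Delta_x^k/m_{\sf x}$; the two bookkeeping choices are equivalent up to constants.
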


\begin{proof}[Proof Sketch of Theorem \ref{thm:nonconvex}]
Since $x$ and $y$-updates are coupled in the descent ascent framework, our idea is to bound $y$'s optimality measure $\Delta_{y}^{k}$ and $x$'s tracking error $\Delta_{x}^k$ through coupled inequalities. We decompose our analysis into four steps. {\it Step 1:} Preparation. {\it Step 2:} We bound $y$'s optimality measure $\Delta_{y}^{k+1}$ with $x$'s tracking error measured in $\|\nabla_{x}L(x^k,y^k)\|^2$ by the Lipschitz continuity of $\nabla \psi$, see Proposition \ref{prop:gL}. {\it Step 3:} We bound $x$'s tracking error $\Delta_{x}^{k+1}$ with $y$'s optimality measure $\|\nabla \psi(y^k)\|$ by the Lipschitz continuity of $\nabla_x L(x,y)$ and $\nabla_y L(x,y)$, see Proposition \ref{prop:Lg}. {\it Step 4:} Finally, we take a linear combination of the coupled bounds on $\Delta_{y}^{k+1}$ and $\Delta_{x}^{k+1}$. We show more details in the sequel.

\vskip4pt
{\it Step 1: Preparation.}
We present some basic lemmas for the analysis. We start with the Lipschitz continuity of $\nabla \psi(y)$, derived based on the Lipschitz continuity of $x^*(y)$. Similar results are also shown in \cite{nesterov2005smooth,rockafellar2009variational}.  
\begin{lemma}\label{lem:psi_property}
Under Assumption \ref{ass:Hessian}, $\nabla\psi(y)$ is $\ell_\psi$-Lipschitz continuous with $\ell_\psi$ defined in \eqref{eq:constants_c}. 
\end{lemma}
The next lemma restates properties of update directions $s^t$ and $t^k$ under Assumption \ref{ass:descent_direct}.
\begin{lemma}\label{lem:descent_direct}
Under Assumption \ref{ass:descent_direct}, for any $k$, it holds that $\gamma_s\le\Gamma_s$ and $\gamma_t\le\Gamma_t$. Moreover,
\$
&\gamma_s \|\nabla_x L(x^k,y^k)\|^2 \le (s^k)^\intercal\nabla_x L(x^k,y^k),\quad \|s^k\|\le \Gamma_s\|\nabla_x L(x^k,y^k)\|, \\
&\gamma_t \|\nabla_y L(x^k,y^k)\|^2 \le (t^k)^\intercal\nabla_y L(x^k,y^k),\quad \|t^k\|\le \Gamma_t\|\nabla_y L(x^k,y^k)\|.
\$
\end{lemma}
Now we show a corollary of the $m_{\sf x}$-strong convexity of $L(x, y)$ with respect to $x$.
\begin{lemma} \label{lem:wx_sc}
Under Assumption \ref{ass:Hessian}, for any $k$, the iterates from Algorithm \ref{alg:grand} satisfy
\$
\|\nabla_y L(x^k,y^k) - \nabla \psi(y^k)\| \le (\ell_{\sf yx}/m_{\sf x})\|\nabla_xL(x^k, y^k)\|.
\$
\end{lemma}
The following lemma provides an upper bound on the $y$-update.  

\begin{lemma}\label{lem:de_lambda}
Under Assumption \ref{ass:Hessian}, for any $\upsilon>0$ and any $k$, the iterates from Algorithm \ref{alg:grand} satisfy
\$
\|\nabla_y L(x^k,y^k)\|^2
\le (1+\upsilon) \|\nabla \psi(y^k)\|^2 + [(1+1/\upsilon)\ell_{\sf yx}^2/m_{\sf x}^2]\|\nabla_xL(x^k,y^k)\|^2.
\$
Further with Assumption \ref{ass:descent_direct}, it holds for any $k = 0,1,\cdots,K-1$ that
\$
\|y^{k+1} - y^k\|^2\le 2\beta^2\Gamma_t^2 \|\nabla \psi(y^k)\|^2 + (2\beta^2\Gamma_t^2\ell_{\sf yx}^2/m_{\sf x}^2)\|\nabla_xL(x^k,y^k)\|^2.
\$
\end{lemma}

\vskip4pt
{\it Step 2: Bounding $y$'s Optimality Measure $\Delta_{y}^{k+1}$.} 
 We first bound $y$'s updated optimality measure $\Delta_{y}^{k+1}$ with $x$'s tracking error measured in $\|\nabla_{x}L(x^k,y^k)\|^2$. The analysis follows from the $\ell_\psi$-Lipschitz continuity of $\nabla \psi$ in Lemma \ref{lem:psi_property}. 
The following proposition shows the obtained upper bound on $\Delta_{y}^{k+1}$.

\begin{proposition} \label{prop:gL}
Under Assumptions \ref{ass:descent_direct} and \ref{ass:Hessian}, with constants $c_1$, $c_2$, and $\ell_\psi$ defined in \eqref{eq:constants_c}, for all $k = 0,1,\cdots,K-1$, the iterates from Algorithm \ref{alg:grand} satisfy
$
\Delta_{y}^{k+1} \le \Delta_{y}^{k} -(c_1-\beta \ell_\psi\Gamma_t^2)\beta\|\nabla \psi(y^{k})\|^2 + [(c_2+\beta\ell_\psi\Gamma_t^2)\beta\ell_{\sf yx}^2/m_{\sf x}^2]\|\nabla_xL({x}^k,y^k)\|^2$.
\end{proposition}
We remark that by checking the derivatives, constants $c_1$ and $c_2$ are monotonically increasing and decreasing relative to the ratio $\gamma_t/\Gamma_t$, respectively. It implies that if the ascent direction $t^k$ lies in a smaller angle to $\nabla_yL(x^k,y^k)$ (as $\gamma_t$ gets closer to $\Gamma_t$), $c_1$ gets larger while $c_2$ gets smaller. In the extreme case when $t^k=\nabla_yL(x^k,y^k)$ ($\gamma_t = \Gamma_t=1$), we have $c_1=c_2=1/2$. This provides the tightest upper bound on $\Delta_y^{k+1}$ in Proposition \ref{prop:gL} compared to other update directions.

\vskip4pt
{\it Step 3: Bounding $x$'s Tracking Error $\Delta_{x}^{k+1}$.}  
Next, we bound $x$'s updated tracking error $\Delta_{x}^{k+1}$ with $y$'s optimality measure $\|\nabla \psi(y^k)\|$. We define constants $\iota_1 = \Gamma_t(2+\Gamma_t + \beta\ell_{\sf yy}\Gamma_t)$ and $\iota_2 = \alpha[\gamma_s-\alpha\Gamma_s^2(\ell_{\sf xx} + \beta\ell_{\sf yx}^2)/2] - \beta\iota_1\ell_{\sf yx}^2/m_{\sf x}^2$.
The conditions of $\alpha$ and $\beta$ in Theorem \ref{thm:nonconvex} ensures $\iota_2>0$. The Lipschitz continuity of $\nabla_x L(x,y)$ in $x$ and $\nabla_y L(x,y)$ in $x$ and $y$ gives the following result.
\begin{proposition} \label{prop:Lg}
Under Assumptions \ref{ass:descent_direct} and \ref{ass:Hessian}, with constants $\iota_1$ and $\iota_2$ defined above, for all $k = 0,1,\cdots,K-1$, the iterates from Algorithm \ref{alg:grand} satisfy
$\Delta_{{x}}^{k+1} \le \Delta_{{x}}^k  + \beta\iota_1\|\nabla \psi(y^k)\|^2 - \iota_2\|\nabla_x L(x^k,y^k)\|^2 + \Delta_{y}^k - \Delta_{y}^{k+1}$.
\end{proposition}

{\it Step 4: Putting Things Together.}
We take a linear combination of the coupled inequalities in Propositions \ref{prop:gL} and \ref{prop:Lg} and obtain the following result. 
\begin{proposition}\label{prop:pre}
Under Assumptions \ref{ass:Hessian} and \ref{ass:descent_direct}, suppose that the stepsizes satisfy the conditions in Theorem \ref{thm:nonconvex}.
Then for all $k = 0,1,\cdots, K-1$, the iterates from Algorithm \ref{alg:grand} satisfy 
\$
(3\iota/c_1+1)\Delta_{y}^{k+1} + \Delta_{x}^{k+1} &\le  (3\iota/c_1+1)\Delta_{y}^{k} - \beta\iota \|\nabla \psi(y^{k})\|^2 + (1- {2\alpha\gamma_sm_{\sf x}}/{3})\Delta_{x}^k,
\$
where constants $c_1$ and $\iota$ are defined in \eqref{eq:constants_c}.
\end{proposition}
Finally, we conclude the proof of Theorem \ref{thm:nonconvex} by substituting $\Upsilon^k$ and $\Delta^k$. 
\end{proof}

\begin{remark}
Due to space constraints, we omit the proof here. Interested readers are referred to \cite{niu2022grand}.
Theorem \ref{thm:nonconvex} presents the global sublinear convergence of GRAND in strongly-convex-nonconcave settings. To illustrate the result, let $\ell=\ell_{\sf xx} + \ell_{\sf xy} + \ell_{xy} + \ell_{\sf yy}$ and $\kappa = \ell/m_{\sf x}$. Here $\ell$ and $\kappa$ characterize the Lipschitz continuity and the condition number of $L$, respectively. We note that $\iota=O(1)$, $\alpha=O(1/\ell)$, and $\beta=O(1/(\kappa^2\ell))$ under the conditions in Theorem \ref{thm:nonconvex}. Theorem \ref{thm:nonconvex} implies that $ (\sum_{k=0}^{K-1}\|\nabla \psi(y^{k})\|^2)/K \le (\sum_{k=0}^{K-1}\Upsilon^k)/(\beta \iota K) \le \Delta^0/(\beta \iota K)$ by the definition of $\Upsilon^k$.
Thus, we need $K=O(\kappa^2\epsilon^{-2})$ iterations to achieve $\min_{k=0,\cdots,K-1}\{\|\nabla \psi(y^{k})\|\}\le \epsilon$. Our iteration complexity and stepsizes all match the state-of-the-art rate for GDA in the same setting \cite{lin2020gradient}. As $\{\Upsilon^k\}_{k\ge 0}$ goes to zero, both $\{\|\nabla \psi(y)\|\}_{k\ge 0}$ and $\{\Delta_x^k\}_{k\ge 0}$ goes to zero, which implies that the iterates converge to a point $(x^*(y^\dagger), y^\dagger)$ with $\nabla \psi(y^\dagger)=0$. Convergence to a stationary point in $y$ is the best we can obtain for strongly-convex-nonconcave problems.

In general, the theoretical convergence speed of the scaled gradient methods has worse constants  than the gradient methods since $\Gamma_s/\gamma_s$ and $\Gamma_t/\gamma_t$ used in the directions are larger than $\Gamma_s/\gamma_s=\Gamma_t/\gamma_t=1$ used in gradient methods. But these scaling methods under GRAND can provide not only more flexibility but also faster convergence behaviors in practice. See Section \ref{sect:exp} for more numerical studies and details.
\end{remark}

\subsubsection{Linear Rates for Strongly-Convex-PL Settings}
The preceding result can be strengthened to a linear rate if we further impose the assumption that $\psi$ satisfies the following Polyak-\L{}ojasiewicz (PL) inequality.
\begin{assumption}\label{ass:psi_PL}
For any $y\in\RR^p$, the function $\psi$ defined in \eqref{eq:x_star_y} has a global maximizer and $-\psi$ satisfies the PL inequality with a positive constant $p_\psi$.
\end{assumption} 
Let $\psi^\star$ be the maximum function value. Assumption \ref{ass:psi_PL} gives that for any $y$, $
\|\nabla \psi(y)\|^2/2 \ge p_\psi(\psi^\star - \psi(y))$. 
PL inequality is a simple sufficient condition to show a global linear rate for gradient descent method on solving minimization problems \cite{karimi2016linear}. As an example, we next show that Assumption \ref{ass:psi_PL} can be easily satisfied by distributed computing problems. We introduce a structured problem with $f:\RR^d\to\RR$, $g:\RR^p\to\RR$, and $W\in\RR^{p\times d}$ as follows,
\#\label{eq:structured_minimax}
L(x,y) = f(x) + y^\intercal Wx - g(y).
\#
\begin{example}\label{ex:g-h}
    In a structured problem of the form \eqref{eq:structured_minimax}, if there exists a function $h:\RR^d \to\RR$ such that $g(y)=h(W^\intercal y)$ and $h(\lambda) + m_h\|\lambda\|^2/2$ is convex with $m_h < 1/\ell_{\sf xx}$, then  Assumption \ref{ass:psi_PL} holds with $p_\psi = \sigma_{\min}^+(W)(1/\ell_{\sf xx} - m_h)$.
\end{example}

 For Problem \ref{prob:dual} with $L^{\sf dc}$ defined in \eqref{eq:lagrangian_func}, Example \ref{ex:g-h} holds with $h=g=0$ and $m_h=0$. Thus, with $\sigma_{\min}^+(W) = 1-\gamma$ and $\ell_{\sf xx} = \ell_{\sf dc} + 2\mu$, we have $p_{\psi}^{\sf dc} = (1-\gamma)/(\ell_{\sf dc} + 2\mu)$. Similarly, for Problem \ref{prob:nf-minimax} with $L^{\sf nf}$ defined in \eqref{eq:Lagrangian-nf}, we have $\ell_{\sf xx} = \ell_{\sf nf}$. Since there exists a feasible solution $x^{\sf nf}$ such that $Ex^{\sf nf}=\pi$, Example \ref{ex:g-h} holds with $g(y)=\pi^\intercal y = (x^{\sf nf})^{\intercal} E^\intercal y$ and $h(\lambda)=(x^{\sf nf})^{\intercal}\lambda$ and thus $m_h=0$. Thus, we obtain $p_{\psi}^{\sf nf} = \sigma_{\min}^+(E)/\ell_{\sf nf}$. In addition to the distributed computing problems, Assumption \ref{ass:psi_PL} can also be naturally satisfied by various cases. For example, Lagrangian functions corresponding to any feasible linearly constrained minimization problems are covered by Example \ref{ex:g-h} and thus satisfy Assumption \ref{ass:psi_PL}. Further examples are presented below.

\begin{example}\label{ex:side-PL}
    If $L$ satisfies the one-sided PL condition \cite{yang2020global} with respect to $y$ with $p_{\sf y}>0$, that is, 
    $
    \|\nabla_y L(x,y)\|^2 \ge 2p_{\sf y}[\max_y L(x,y) - L(x,y)]
    $ for any $x, y$, then Assumption \ref{ass:psi_PL} holds with $p_\psi=p_{\sf y}$.
\end{example}
A particular case of Example \ref{ex:side-PL} is when $L$ is $p_{\sf y}$-strongly concave with respect to $y$ with $p_{\sf y}>0$ for any $x\in\RR^d$. Thus, Assumption \ref{ass:psi_PL} holds for strongly-convex-strongly-concave problems.

\begin{example}\label{ex:full-rank}
    In a structured problem of the form \eqref{eq:structured_minimax}, if $W$ has full row-rank and the function $g(y) + m_g\|y|^2/2$ is convex with a constant $m_g<\sigma_{\min}^2(W)/\ell_{\sf xx}$, then it holds that $\sigma_{\min}(W)>0$ and Assumption \ref{ass:psi_PL} holds with $p_\psi = \sigma_{\min}^2(W)/\ell_{\sf xx}-m_g$.
\end{example}
Example \ref{ex:full-rank} holds for many applications including reinforcement learning, empirical risk minimization, and robust optimization problems \cite{du2019linear}. The authors of \cite{du2019linear} consider the case when $g$ is convex ($m_g=0$) and show a linear convergence of GDA.

Now we study the global convergence of GRAND for strongly-convex-PL problems under Assumption \ref{ass:psi_PL}. In particular, we take $\Xi_\psi=\psi(y^\star)$ as the exact upper bound and have $\Delta_y^k = \psi(y^\star) - \psi(y^k)$ in \eqref{eq:terrors}. We note that $\Delta_y^k=0$ and $\Delta_{x}^k=0$ and thus $\Delta^k$ defined in \eqref{eq:Delta_Upsilon} is zero at a global minimax point $(x^k, y^k) = (x^*(y^\star), y^\star)$. For convenience, we define a positive constant $\delta$ with $c_1$ and $\iota$ defined in \eqref{eq:constants_c},
\#\label{eq:delta}
\delta = \min\{{2\beta\iota p_\psi c_1}/(3\iota + c_1), {2\alpha\gamma_sm_{\sf x}}/{3}\}.
\#
The following theorem states the result, where $\delta$ serves as the linear rate coefficient.
\begin{theorem}[Strongly-Convex-PL] \label{thm:PL}
Under Assumptions \ref{ass:descent_direct}, \ref{ass:Hessian}, and \ref{ass:psi_PL}, suppose the stepsizes satisfy the conditions in Theorem \ref{thm:nonconvex} and additionally, $\beta < (3\iota + c_1)/(2\iota p_\psi c_1)$.
For all $k = 0,1,\cdots, K-1$, the iterates from GRAND satisfy 
\$
\Delta^{k+1} \le (1-\delta)\Delta^k,
\$
where $\delta$ is defined in \eqref{eq:delta} satisfying $0<\delta<1$.
\end{theorem}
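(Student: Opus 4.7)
The plan is to piggyback on Proposition \ref{prop:pre} from the sublinear analysis and activate the PL inequality (Assumption \ref{ass:psi_PL}) to convert the gradient-norm progress term into a contraction on $\Delta_y^k$, producing a single-step geometric decay on the Lyapunov function $\Delta^k$. The stepsize conditions imported from Theorem \ref{thm:nonconvex} do all the heavy lifting: once they are in place, Proposition \ref{prop:pre} is already a ``ready-made'' coupled inequality, and the new hypothesis $\beta<(3\iota+c_1)/(2\iota p_\psi c_1)$ only serves to keep the resulting rate $\delta$ strictly below $1$.

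First, invoke Proposition \ref{prop:pre}, which under Assumptions \ref{ass:descent_direct}, \ref{ass:Hessian} and the stepsize conditions of Theorem \ref{thm:nonconvex} gives
\[
(3\iota/c_1+1)\Delta_{y}^{k+1}+\Delta_{x}^{k+1}\ \le\ (3\iota/c_1+1)\Delta_{y}^{k}-\beta\iota\,\|\nabla\psi(y^{k})\|^{2}+(1-2\alpha\gamma_{s}m_{\sf x}/3)\Delta_{x}^{k}.
\]
Next, since Assumption \ref{ass:psi_PL} is in force we take $\Xi_\psi=\psi(y^\star)$ so that $\Delta_y^k=\psi(y^\star)-\psi(y^k)$, and the PL inequality yields $\|\nabla\psi(y^{k})\|^{2}\ge 2p_{\psi}\Delta_{y}^{k}$. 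Substituting into the bound above eliminates the gradient term and gives
\[
(3\iota/c_1+1)\Delta_{y}^{k+1}+\Delta_{x}^{k+1}\ \le\ \bigl[(3\iota/c_1+1)-2\beta\iota p_{\psi}\bigr]\Delta_{y}^{k}+(1-2\alpha\gamma_{s}m_{\sf x}/3)\Delta_{x}^{k}.
\]

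Now factor the right-hand side as a contraction. Writing $A=3\iota/c_1+1=(3\iota+c_1)/c_1$, the coefficient of $\Delta_{y}^{k}$ rearranges to $A\bigl[1-2\beta\iota p_{\psi}c_1/(3\iota+c_1)\bigr]$, while the coefficient of $\Delta_{x}^{k}$ is literally $1-2\alpha\gamma_{s}m_{\sf x}/3$. Taking $\delta$ as the minimum of these two rates—precisely the definition in \eqref{eq:delta}—both coefficients are bounded above by $1-\delta$, so
\[
(3\iota/c_1+1)\Delta_{y}^{k+1}+\Delta_{x}^{k+1}\ \le\ (1-\delta)\bigl[(3\iota/c_1+1)\Delta_{y}^{k}+\Delta_{x}^{k}\bigr],
\]
which, by the choice of Lyapunov combination in \eqref{eq:Delta_Upsilon}, amounts to $\Delta^{k+1}\le(1-\delta)\Delta^{k}$. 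Positivity $\delta>0$ is automatic since every constant in the definition is positive; the bound $\delta<1$ requires both $2\beta\iota p_\psi c_1/(3\iota+c_1)<1$ and $2\alpha\gamma_s m_{\sf x}/3<1$. The former is exactly what the additional hypothesis $\beta<(3\iota+c_1)/(2\iota p_\psi c_1)$ buys us, and the latter follows from the upper bound on $\alpha$ already in Theorem \ref{thm:nonconvex}.

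The argument is short because all the real work is in Proposition \ref{prop:pre}; the principal bookkeeping obstacle is to verify that the factor $(3\iota+c_1)$ that naturally arises from dividing $A$ out of the $\Delta_{y}^{k}$ coefficient matches the denominator appearing in the definition of $\delta$, and to check that the new stepsize bound on $\beta$ plays the role of keeping $\delta<1$ rather than (as one might initially suspect) ensuring $\delta>0$. No new analytical tool beyond PL is needed, and no re-derivation of the coupled descent-ascent inequalities is required.
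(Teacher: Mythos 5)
Your proposal is correct and follows essentially the same route as the paper: substitute the PL inequality $\|\nabla\psi(y^k)\|^2\ge 2p_\psi\Delta_y^k$ into Proposition \ref{prop:pre}, factor the $\Delta_y^k$ coefficient as $(3\iota/c_1+1)\bigl[1-2\beta\iota p_\psi c_1/(3\iota+c_1)\bigr]$, and take $\delta$ as the minimum of the two decay rates. The only cosmetic difference is in verifying $\delta<1$: the paper derives it from the chain $\alpha\le 2\gamma_s/(3\Gamma_s^2\ell_{\sf xx})\le 2/(3\gamma_s m_{\sf x})<3/(2\gamma_s m_{\sf x})$ (so the $\alpha$-term alone already forces the minimum below $1$), whereas you also lean on the extra $\beta$ bound — and strictly speaking only one of the two terms needs to be below $1$ for their minimum to be, so saying both are ``required'' is a harmless overstatement.
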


\begin{remark}
    Theorem \ref{thm:PL} presents the global linear (Q-linear) rate of GRAND for strongly-convex-PL problems under Assumption \ref{ass:psi_PL}. As $\{\Delta^k\}$ goes to zero, both $\{\Delta_y^k\}$ and $\{\Delta_x^k\}$ goes to zero. Moreover, if $y^\dagger$ is a unique maximzer of $\psi$, the theorem ensures the convergence of the iterates to the global minimax point $(x^*(y^\dagger), y^\dagger)$.
    
    We recall that the distributed consensus and the network flow problems mentioned in Example \ref{ex:g-h} satisfy Assumption \ref{ass:psi_PL}. Thus, Theorem \ref{thm:PL} guarantees the global linear convergence of DISH in Algorithm \ref{alg:dish} for solving Problem \ref{prob:dual} and Algorithm \ref{alg:nf} for solving Problem \ref{prob:nf-minimax}. A specialized linear result with tighter coefficients for DISH is presented in \cite{niu2022dish}. 
    
    We define $\widetilde\kappa = \ell/\min\{m_{\sf x}, p_{\psi}\}$ to characterize the condition number of $L$ under Assumptions \ref{ass:Hessian} and \ref{ass:psi_PL}. We investigate the rate coefficient $1-\delta$ by substituting upper bounds on $\alpha$ and $\beta$ and obtain $\delta=O(1/\widetilde\kappa^3)$. It is slower than the optimal complexity $O(1/\widetilde\kappa^2)$ of GDA in strongly-convex-strongly-concave case \cite{nesterov2011solving} since we study a more general strongly-convex-PL setting here.
    In short, $\delta$ depends on the function property $\widetilde\kappa$ and update angles $\Gamma_t/\gamma_t$ and $\Gamma_s/\gamma_s$. Though the theorem is conservative, relying on the worst case of update direction angles,
as experiments show in Section \ref{sect:exp}, scaling matrices and Newton-type updates can accelerate the numerical performance.
\end{remark}

\subsection{Global Convergence of Alt-GRAND}\label{sect:thm-Alt-GRAND}

We now present global rates of Alt-GRAND. The analysis follows the same steps as those for GRAND in Section \ref{sect:theorem}. Let positive constants $\widetilde\iota = 2\Gamma_\tau + {c_1\ell_{\sf yy}}/(3\ell_\psi)$ and $\widetilde\delta = \min\{{2\beta\widetilde\iota p_\psi c_1}/(3\widetilde\iota + c_1), {2\alpha\gamma_sm_{\sf x}}/{3}\}$, and Lyapunov functions $\widetilde\Upsilon^k= \beta\widetilde\iota\|\nabla \psi(y^{k})\|^2 + 2\alpha\gamma_sm_{\sf x}\Delta_{x}^k/3$ and $\widetilde\Delta^k= (3\widetilde\iota/c_1)\Delta_{y}^{k} + \Delta_{ x}^{k}$ be linear combinations of $\Delta_x^k$ and $\Delta_y^k$.  
The convergence results are shown as follows.
\begin{theorem}[Strongly-Convex-Nonconcave] \label{thm:nonconvex-alt}
We assume the stepsizes to satisfy some conditions that $\alpha=O(1/\ell)$ and $\beta=O(1/(\kappa^2\ell))$. Under Assumptions \ref{ass:Hessian} and \ref{ass:descent_direct-alt}, the iterates from Alt-GRAND satisfy 
$(\sum_{k=0}^{K-1}\widetilde\Upsilon^k)/K \le {\widetilde\Delta^0}/{K}$. 
\end{theorem}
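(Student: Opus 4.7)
The plan is to mirror the four-step structure used for Theorem \ref{thm:nonconvex}, adapted to the Gauss--Seidel ordering of Alt-GRAND. The preparatory lemmas (Lemmas \ref{lem:psi_property}--\ref{lem:de_lambda}) remain valid verbatim, except that Assumption \ref{ass:descent_direct-alt} replaces Assumption \ref{ass:descent_direct}: $s^k$ still satisfies the same gradient-related conditions at $(x^k,y^k)$, whereas $t^k$ now satisfies analogous conditions relative to $\nabla_y L(x^{k+1},y^k)$. In particular I would re-derive Lemma \ref{lem:descent_direct} for $t^k$ at the alternating point, giving $\gamma_\tau\|\nabla_y L(x^{k+1},y^k)\|^2 \le (t^k)^\intercal \nabla_y L(x^{k+1},y^k)$ and $\|t^k\| \le \Gamma_\tau \|\nabla_y L(x^{k+1},y^k)\|$, and re-derive the shifted version of Lemma \ref{lem:wx_sc}, $\|\nabla_y L(x^{k+1},y^k)-\nabla\psi(y^k)\| \le (\ell_{\sf yx}/m_{\sf x})\|\nabla_x L(x^{k+1},y^k)\|$, simply by invoking strong convexity at the point $(x^{k+1},y^k)$.

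Step 2 is where the alternating ordering actually simplifies the analysis and removes the $\Gamma_t^2$ term from $\widetilde\iota$ relative to $\iota$. Starting from $\psi(y^{k+1}) \ge \psi(y^k) + \nabla\psi(y^k)^\intercal(y^{k+1}-y^k) - (\ell_\psi/2)\|y^{k+1}-y^k\|^2$ and substituting $y^{k+1}-y^k = \beta t^k$, I would split $\nabla\psi(y^k)^\intercal t^k = (t^k)^\intercal\nabla_y L(x^{k+1},y^k) + (t^k)^\intercal[\nabla\psi(y^k)-\nabla_y L(x^{k+1},y^k)]$. The first inner product is $\ge \|t^k\|^2/\Gamma_\tau$ by Assumption \ref{ass:descent_direct-alt}, while the cross term is handled by the Cauchy--Schwarz--Young combination together with the shifted Lemma \ref{lem:wx_sc} above. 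Crucially, no $(1+\upsilon)$-style splitting of Lemma \ref{lem:de_lambda} is needed, because the relevant smoothness bound lives at $(x^{k+1},y^k)$ rather than $(x^k,y^k)$; I then translate $\|\nabla_x L(x^{k+1},y^k)\|$ back to $\|\nabla_x L(x^k,y^k)\|$ via $\ell_{\sf xx}$-smoothness and $\|x^{k+1}-x^k\|\le \alpha\Gamma_s\|\nabla_x L(x^k,y^k)\|$, which is absorbable into the $\Delta_x^k$ term since $\alpha=O(1/\ell)$. This yields an analogue of Proposition \ref{prop:gL} with $\Gamma_\tau$ replacing $\Gamma_t$ and without the $\Gamma_t^2$ penalty.

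Step 3 parallels Proposition \ref{prop:Lg}: expanding $L(x^{k+1},y^{k+1}) - L(x^*(y^{k+1}),y^{k+1})$ by adding and subtracting $L(x^{k+1},y^k)$ and $L(x^*(y^k),y^k)$, and using the $\ell_{\sf xx}$-smoothness of $\nabla_x L$ in $x$ together with the $\ell_{\sf yx},\ell_{\sf yy}$-Lipschitzness of $\nabla_y L$, produces an inequality of the form $\Delta_x^{k+1} \le \Delta_x^k + \beta\widetilde\iota_1\|\nabla\psi(y^k)\|^2 - \widetilde\iota_2\|\nabla_x L(x^k,y^k)\|^2 + \Delta_y^k - \Delta_y^{k+1}$ with appropriately modified constants. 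Step 4 then takes the same linear combination as in Proposition \ref{prop:pre} with weight $3\widetilde\iota/c_1+1$ on the $\Delta_y$ inequality and weight $1$ on the $\Delta_x$ inequality. Under the stepsize restrictions $\alpha=O(1/\ell)$ and $\beta=O(1/(\kappa^2\ell))$, every nonnegative coefficient multiplying $\|\nabla_x L(x^k,y^k)\|^2$ is dominated by the $\widetilde\iota_2\|\nabla_x L(x^k,y^k)\|^2$ slack (itself bounded below via $m_{\sf x}$-strong convexity by $2\gamma_s m_{\sf x}\Delta_x^k/3$), yielding
\[
(3\widetilde\iota/c_1+1)\Delta_y^{k+1} + \Delta_x^{k+1} \le (3\widetilde\iota/c_1+1)\Delta_y^k - \beta\widetilde\iota\|\nabla\psi(y^k)\|^2 + (1-2\alpha\gamma_s m_{\sf x}/3)\Delta_x^k.
\]
Telescoping over $k=0,\dots,K-1$ and dividing by $K$ gives the claimed averaged bound $(\sum_{k=0}^{K-1}\widetilde\Upsilon^k)/K \le \widetilde\Delta^0/K$.

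The main obstacle is bookkeeping in Step 2: one must control $\nabla_x L(x^{k+1},y^k)$ in terms of $\nabla_x L(x^k,y^k)$ without generating a term in $\|\nabla\psi(y^k)\|^2$ that cannot be beaten by the $-\beta\widetilde\iota\|\nabla\psi(y^k)\|^2$ lead term in the final combination. This forces the quadratic stepsize restriction $\beta=O(1/(\kappa^2\ell))$ and determines the precise form of $\widetilde\iota$.
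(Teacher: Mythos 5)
Your proposal is correct and matches the paper's intended proof, which is given only as ``follows the same steps as those of Theorems \ref{thm:nonconvex} and \ref{thm:PL}'': you rerun the four-step coupled-inequality Lyapunov argument with the gradient-related conditions for $t^k$ shifted to the point $(x^{k+1},y^k)$, the shifted analogue of Lemma \ref{lem:wx_sc}, and the translation $\|\nabla_x L(x^{k+1},y^k)\|\le(1+\alpha\ell_{\sf xx}\Gamma_s)\|\nabla_x L(x^k,y^k)\|$. One small mislabeling: the disappearance of the $\Gamma_t^2$ term from $\widetilde\iota$ is not a Step-2 phenomenon but arises in term (A) of the Step-3 bound on $\Delta_x^{k+1}$, where Assumption \ref{ass:descent_direct-alt} gives $(t^k)^\intercal\nabla_y L(x^{k+1},y^k)\le\Gamma_\tau\|\nabla_y L(x^{k+1},y^k)\|^2$ directly and replaces $\iota_1=\Gamma_t(2+\Gamma_t+\beta\ell_{\sf yy}\Gamma_t)$ by $\widetilde\iota_1=\Gamma_\tau(2+\beta\ell_{\sf yy}\Gamma_\tau)$, whereas the Step-2 bound on $\Delta_y^{k+1}$ still uses the $(1+\nu)$-type splitting of Lemma \ref{lem:de_lambda} (at the shifted point) if one wants to reproduce the paper's constants $c_1$ and $c_2$ appearing in $\widetilde\Upsilon^k$ and $\widetilde\Delta^k$.
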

\begin{theorem}[Strongly-Convex-PL] \label{thm:PL-alt} Suppose that the stepsizes satisfy some conditions that $\alpha=O(1/\ell)$ and $\beta=O(1/(\kappa^2\ell))$.
It holds that $0<\widetilde\delta<1$. Moreover, under Assumptions \ref{ass:Hessian}, \ref{ass:descent_direct-alt}, and \ref{ass:psi_PL}, for all $k = 0,1,\cdots, K-1$, the iterates from Alt-GRAND satisfy $
\widetilde\Delta^{k+1} \le (1-\widetilde\delta)\widetilde\Delta^k$.
\end{theorem}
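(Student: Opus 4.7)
The plan is to mirror the four-step decomposition used in the proof of Theorem \ref{thm:PL} (preparation, bounding $\Delta_y^{k+1}$, bounding $\Delta_x^{k+1}$, combination), but with the gradient $\nabla_y L$ in Assumption \ref{ass:descent_direct-alt} evaluated at the already updated primal iterate $(x^{k+1},y^k)$ rather than at $(x^k,y^k)$. Once the two coupled one-step inequalities are in place, linear convergence of $\widetilde\Delta^k$ will follow from the PL inequality in Assumption \ref{ass:psi_PL} in exactly the way the PL inequality turns Proposition \ref{prop:pre} into Theorem \ref{thm:PL}.

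For Step 2, the plan is to replace Proposition \ref{prop:gL} by an analogue in which $t^k$ is now gradient-related to $\nabla_y L(x^{k+1},y^k)$. Using the $\ell_\psi$-Lipschitzness of $\nabla\psi$ from Lemma \ref{lem:psi_property}, I would expand $\psi(y^{k+1})\le \psi(y^k) + \beta \nabla\psi(y^k)^\intercal t^k + (\ell_\psi/2)\|y^{k+1}-y^k\|^2$ and then split $\nabla\psi(y^k)^\intercal t^k$ into $\nabla_y L(x^{k+1},y^k)^\intercal t^k$ plus a cross-term $(\nabla\psi(y^k)-\nabla_y L(x^{k+1},y^k))^\intercal t^k$. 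The first piece is controlled directly by Assumption \ref{ass:descent_direct-alt}, and the cross-term is bounded via Young's inequality together with the $\ell_{\sf yx}$-Lipschitzness of $\nabla_y L$ in $x$; the key observation is that $\|x^{k+1}-x^*(y^k)\|$ can be absorbed into $\Delta_x^{k}$ by $m_{\sf x}$-strong convexity (Lemma \ref{lem:wx_sc} applied at $x^{k+1}$). This is precisely the reason $\widetilde\iota$ loses the $\Gamma_t^2$ summand present in $\iota$: the quadratic-in-$\Gamma_t$ piece that arises in GRAND from estimating $\|\nabla_y L(x^k,y^k)\|^2$ by $\|\nabla\psi(y^k)\|^2$ via Lemma \ref{lem:de_lambda} is avoided here because $\nabla_y L(x^{k+1},y^k)$ is already close to $\nabla\psi(y^k)$ after the primal step.

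For Step 3, since the $x$-update in Alt-GRAND obeys exactly the same conditions as in GRAND, I would reuse the argument of Proposition \ref{prop:Lg} almost verbatim, with the only change being the coefficient $\iota_1$ that multiplies $\|\nabla\psi(y^k)\|^2$; it gets replaced by a slightly modified $\widetilde\iota_1$ reflecting the alternating bound on $\|y^{k+1}-y^k\|^2$. The stepsize conditions $\alpha=O(1/\ell)$ and $\beta=O(1/(\kappa^2\ell))$ then guarantee positivity of the coefficient of $\|\nabla_x L(x^k,y^k)\|^2$ exactly as before. Combining this with the bound from Step 2 via a linear combination with weight $3\widetilde\iota/c_1 + 1$ on $\Delta_y$ will yield the Alt-GRAND analogue of Proposition \ref{prop:pre}:
\[
\widetilde\Delta^{k+1} \le \widetilde\Delta^k - \beta\widetilde\iota\|\nabla\psi(y^k)\|^2 - (2\alpha\gamma_sm_{\sf x}/3)\Delta_x^k,
\]
which is exactly Theorem \ref{thm:nonconvex-alt} after telescoping.

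Finally, to obtain the linear rate I would invoke the PL inequality $\|\nabla\psi(y^k)\|^2 \ge 2p_\psi \Delta_y^k$ and conclude that the right-hand side is bounded by $(1-\widetilde\delta)\widetilde\Delta^k$ with $\widetilde\delta$ as defined. The condition $\beta<(3\widetilde\iota+c_1)/(2\widetilde\iota p_\psi c_1)$, imported with $\iota\to\widetilde\iota$ from the hypothesis of Theorem \ref{thm:PL}, ensures $\widetilde\delta<1$. The main obstacle is bookkeeping in Step 2: one must choose the Young's-inequality weights on the cross-term $(\nabla\psi(y^k)-\nabla_y L(x^{k+1},y^k))^\intercal t^k$ so that the $\|\nabla_x L\|^2$-type residual generated is dominated by the corresponding negative term in the $\Delta_x^{k+1}$ bound of Step 3; this is what forces the restrictive dependence $\beta=O(1/(\kappa^2\ell))$ on the stepsize. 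All remaining manipulations are mechanical and parallel those already carried out for GRAND.
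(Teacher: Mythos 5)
Your plan is correct and matches the paper's approach exactly: the paper's entire ``proof'' of Theorem \ref{thm:PL-alt} is the one-line remark that it follows the same steps as Theorems \ref{thm:nonconvex} and \ref{thm:PL}, which is precisely the adaptation you outline (re-deriving the Proposition \ref{prop:gL}/\ref{prop:Lg} analogues with $t^k$ gradient-related to $\nabla_y L(x^{k+1},y^k)$, combining, and applying the PL inequality). Your sketch is in fact more detailed than the paper's; the only small inaccuracy is in attributing the disappearance of the $\Gamma_t^2$ summand to Lemma \ref{lem:de_lambda} in Step 2, whereas it actually drops out of the term-(A) bound in the Step-3 analogue of Proposition \ref{prop:Lg}, where the cross-term $\beta(t^k)^\intercal[\nabla_yL(x^{k+1},y^k)-\nabla_yL(x^k,y^k)]$ no longer needs to be split off.
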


Theorems \ref{thm:nonconvex-alt} and \ref{thm:PL-alt} demonstrate the global sublinear and linear convergence rates of Alt-GRAND for strongly-convex-nonconcave and strongly-convex-PL scenarios, respectively. These results are similar to the rates achieved by GRAND in Theorem \ref{thm:PL}, with only differences in the coefficient constants. Though comparisons between the theoretical results may not be straightforward, we will evaluate their numerical performance later. 
Besides a global rate guarantee, Alt-NDA, the Newton-type method, exhibits local quadratic convergence \cite{zhang2020newton} when $\alpha=\beta=1$. Further discussion on Newton-based methods and their local higher-order rates will be presented in the following section.

\section{Newton-Based Methods and Local Higher-Order Rates} \label{sect:local-Newton}
Though we do not obtain superlinear rates for distributed hybrid methods due to distributed approximation errors, this section discusses related Newton-based methods for solving Problem \ref{prob:L} in centralized settings, where an exact Newton-based step is feasible. We study the local quadratic rates of Alt-NDA and its variants in Section \ref{sect:local-AltNDA}. We also explore a modified Newton's method in Section \ref{sect:local-cubic} to achieve local cubic rates by reusing the Hessian inversion computation.  

\subsection{Multistep Alt-NDA with Local Quadratic Rates} \label{sect:local-AltNDA}
We define two mappings $X,Y:\RR^{d}\times\RR^{p}\to\RR^{d}\times\RR^{p}$ using the operator $N$ defined in \eqref{eq:N-operator},
\$
& X(x,y) = (x - [\nabla_{xx}^2 L(x,y)]^{-1}\nabla_xL(x,y), y ), \\
& Y(x,y) = (x, y + [N(x, y)]^{-1}\nabla_yL(x,y)).
\$ 
Throughout this section, we consider $(x^\dagger,y^\dagger)$ as a first-order stationary point of Problem \ref{prob:L}. We focus on the local performance and assume  $[N(x,y)]^{-1}$ is well-defined (not necessarily semi-definite) in a neighborhood around $(x^\dagger,y^\dagger)$. Specifically, we consider Alt-NDA, introduced in Section \ref{sect:grand_alt}, with $s^k = [\nabla_{xx}^2 L(x^k,y^k)]^{-1}\nabla_xL(x^k,y^k)$ and $t^k = [N(x^{k+1}, y^k)]^{-1}\nabla_yL(x^{k+1},y^k)$. Alt-NDA can be represented as the following composite update,
\$
(x^{k+1}, y^{k+1}) = Y \circ X(x^k, y^k).
\$
The local quadratic rates of Alt-NDA are shown under local Lipschitz Hessian conditions near the stationary point \cite{zhang2020newton}. Here, we introduce a modified Alt-NDA, $U_J=(X)^J\circ Y\circ X$, with additional $J\ge1$ minimization steps. We show that $U_J$ converges to $(x^\dagger,y^\dagger)$ with at least a quadratic rate. The updates of $U_J$ at iteration $k$ are as follows,
\#\label{eq:ms-newton}
& (x^{k+1, 0}, y^k) = X(x^k, y^k), \quad (x^{k+1, 0}, y^{k+1}) = Y(x^{k+1, 0}, y^k) \notag\\
& (x^{k+1, j+1}, y^{k+1}) = X(x^{k+1, j}, y^{k+1}) \text{ for } j = 0, \cdots, J-1, \quad \notag\\
& x^{k+1} = x^{k+1, J}.
\#
Let $S^\prime$ be the derivative of a mapping $S$. The following lemma provides a sufficient condition for the local quadratic convergence of any mapping. 
\begin{lemma}[Theorem 10.1.7 in \cite{ortega2000iterative}] \label{lem:ortega}
Let $S: \RR^n\to\RR^n$ and $z^\dagger$ such that $S(z^\dagger) = z^\dagger$. Suppose that $S$ is continuously differentiable on an open ball $\cB(z^\dagger,r)\subset\RR^n$ and twice differentiable at $z^\dagger$, and $S^\prime(z^\dagger)=0$. Then there is an open neighborhood $\mathfrak{N}\subset\RR^n$ of $z^\dagger$ such that for any $z^0\in\mathfrak{N}$, the iterates $\{z^k\}_{k\ge0}$ generated by $z^{k+1} = S(z^k)$ converge to $z^\dagger$ with at least a quadratic rate.
\end{lemma}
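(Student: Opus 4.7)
The plan is to reduce the statement to a local Taylor estimate at the fixed point. Since $S$ is twice differentiable at $z^\dagger$ with $S(z^\dagger)=z^\dagger$ and $S'(z^\dagger)=0$, a second-order Taylor expansion at $z^\dagger$ will collapse to the quadratic remainder, yielding $\|S(z)-z^\dagger\|=O(\|z-z^\dagger\|^2)$, after which the rest is an elementary induction showing iterates stay in a small ball and contract quadratically.

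First, I would write the Taylor expansion
\[
S(z) - z^\dagger \;=\; \bigl(S(z^\dagger)-z^\dagger\bigr) + S'(z^\dagger)(z-z^\dagger) + \tfrac{1}{2}S''(z^\dagger)[z-z^\dagger,z-z^\dagger] + R(z),
\]
where twice differentiability at $z^\dagger$ gives $\|R(z)\|=o(\|z-z^\dagger\|^2)$. Substituting $S(z^\dagger)=z^\dagger$ and $S'(z^\dagger)=0$ leaves only the quadratic term plus $o(\|z-z^\dagger\|^2)$. Choosing $\epsilon>0$ and absorbing $\epsilon$ into the constant, I would obtain a $\delta\in(0,r]$ and a constant $M>0$ (say $M=\tfrac{1}{2}\|S''(z^\dagger)\|+\epsilon$) such that
\[
\|S(z)-z^\dagger\| \;\le\; M\,\|z-z^\dagger\|^2 \quad \text{for all } z\in\cB(z^\dagger,\delta).
\]

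Next, I would pick $\rho\in(0,\delta)$ with $M\rho<1$ and set $\mathfrak{N}=\cB(z^\dagger,\rho)$. For any $z^0\in\mathfrak{N}$, an induction shows the iterates remain in $\mathfrak{N}$: assuming $z^k\in\mathfrak{N}$, the quadratic bound gives $\|z^{k+1}-z^\dagger\|\le M\|z^k-z^\dagger\|^2 \le (M\rho)\|z^k-z^\dagger\|<\|z^k-z^\dagger\|<\rho$. Iterating the same inequality produces $\|z^{k+1}-z^\dagger\|\le M\|z^k-z^\dagger\|^2$ for every $k$, which is quadratic convergence with asymptotic constant bounded by $M$; in particular $\|z^k-z^\dagger\|\to 0$ since $(M\rho)^k\to 0$.

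The main obstacle, and the only nontrivial step, is promoting the pointwise twice-differentiability at $z^\dagger$ to a \emph{uniform} quadratic remainder bound on a whole ball around $z^\dagger$. This is precisely why the hypothesis combines continuous differentiability on $\cB(z^\dagger,r)$ with twice differentiability at the single point $z^\dagger$: the former justifies handling $R(z)$ in integral form via $R(z)=\int_0^1[S'(z^\dagger+t(z-z^\dagger))-S'(z^\dagger)](z-z^\dagger)\,dt$, and the latter then ensures the integrand is $o(\|z-z^\dagger\|)$ as $z\to z^\dagger$, giving $\|R(z)\|=o(\|z-z^\dagger\|^2)$ uniformly on a small enough ball. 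Once this is set up cleanly, the chain $\|z^{k+1}-z^\dagger\|\le M\|z^k-z^\dagger\|^2$ and invariance of $\mathfrak{N}$ complete the argument.
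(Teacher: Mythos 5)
Your proof is correct, and it is worth noting that the paper itself does not prove this lemma at all --- it is imported verbatim as Theorem 10.1.7 of the cited Ortega--Rheinboldt monograph --- so you have supplied a self-contained argument where the paper offers only a citation. Your argument is the standard one (Ostrowski-type attraction theorem): the hypotheses $S(z^\dagger)=z^\dagger$, $S'(z^\dagger)=0$, together with differentiability of $S'$ at $z^\dagger$, yield $\|S(z)-z^\dagger\|\le M\|z-z^\dagger\|^2$ on a small ball, and the invariance-plus-contraction induction finishes it. One piece of bookkeeping is off but harmless: the integral $\int_0^1[S'(z^\dagger+t(z-z^\dagger))-S'(z^\dagger)](z-z^\dagger)\,dt$ is the \emph{first}-order remainder (it still contains the $\tfrac12 S''(z^\dagger)[h,h]$ contribution), and its integrand is $O(\|z-z^\dagger\|^2)$, not $o(\|z-z^\dagger\|)$ implying $o(\|z-z^\dagger\|^2)$ as you wrote. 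The correct chain is: differentiability of $S'$ at $z^\dagger$ gives $\|S'(z^\dagger+th)-S'(z^\dagger)\|\le Ct\|h\|$ for $\|h\|$ small, hence the integral is at most $\tfrac{C}{2}\|h\|^2$, which with $S'(z^\dagger)=0$ already gives $\|S(z)-z^\dagger\|\le M\|z-z^\dagger\|^2$ directly --- you do not even need to isolate the $S''(z^\dagger)$ term or invoke a $o(\|h\|^2)$ Peano remainder. With that adjustment the proof is complete.
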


It is straightforward that Newton's method for minimization problems satisfies the above conditions and thus converges at least quadratically in a  local neighborhood. We prove the following theorem of $U_J$'s local quadratic rate based on Lemma \ref{lem:ortega}.
\begin{theorem}\label{thm:uj-local-quad}
Under Assumption \ref{ass:Hessian}, it holds  that $U_J^\prime(x^\dagger, y^\dagger) = X^\prime(x^\dagger, y^\dagger) Y^\prime(x^\dagger, y^\dagger) X^\prime(x^\dagger, y^\dagger) = 0
$ for any $J\ge 1$. 
Moreover, there is an open neighborhood $\mathfrak{N}_{U_J}$ of $(x^\dagger, y^\dagger)$ such that for any $(x^0, y^0)\in\mathfrak{N}_{U_J}$, the iterates $\{(x^{k}, y^{k})\}_{k\ge 0}$ generated by $(x^{k+1}, y^{k+1}) = U_J(x^{k}, y^{k})$ in  \eqref{eq:ms-newton} converges to $(x^\dagger, y^\dagger)$ at least quadratically.
\end{theorem}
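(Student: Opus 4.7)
The plan is to verify the hypotheses of Lemma \ref{lem:ortega} for the mapping $S = U_J$ at the fixed point $z^\dagger = (x^\dagger, y^\dagger)$. Since $\nabla_x L(x^\dagger, y^\dagger) = 0$ and $\nabla_y L(x^\dagger, y^\dagger) = 0$, both $X$ and $Y$ fix $z^\dagger$, so $U_J(z^\dagger) = z^\dagger$. By the chain rule,
\[
U_J'(z^\dagger) \;=\; X'(z^\dagger)^J \, Y'(z^\dagger) \, X'(z^\dagger),
\]
so it suffices to prove the sharper identity $X'(z^\dagger)\, Y'(z^\dagger)\, X'(z^\dagger) = 0$; the general case then follows by left-multiplying by $X'(z^\dagger)^{J-1}$.

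First I would compute the block Jacobians $X'$ and $Y'$ at $z^\dagger$. Writing $H = \nabla_{xx}^2 L(x^\dagger, y^\dagger)$, $M = \nabla_{xy}^2 L(x^\dagger, y^\dagger)$, $K = \nabla_{yy}^2 L(x^\dagger, y^\dagger)$, and $N^\dagger = N(x^\dagger, y^\dagger)$, the derivatives of the inverses $H^{-1}$ and $(N^\dagger)^{-1}$ only enter through terms multiplied by $\nabla_x L$ or $\nabla_y L$, which vanish at $z^\dagger$. A direct differentiation then yields
\[
X'(z^\dagger) = \begin{pmatrix} 0 & -H^{-1} M \\ 0 & I \end{pmatrix}, \qquad Y'(z^\dagger) = \begin{pmatrix} I & 0 \\ (N^\dagger)^{-1} M^\intercal & I + (N^\dagger)^{-1} K \end{pmatrix}.
\]
The vanishing upper-left block of $X'$ is the standard ``Newton-kills-the-identity'' fact at a minimizer of $L(\cdot, y^\dagger)$; Assumption \ref{ass:Hessian} makes $H$ invertible, and $(N^\dagger)^{-1}$ exists by the local hypothesis.

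The core algebraic step is the block product. Setting $A = -H^{-1} M$, $B = (N^\dagger)^{-1} M^\intercal$, and $C = I + (N^\dagger)^{-1} K$, multiplying out gives
\[
X'(z^\dagger)\, Y'(z^\dagger)\, X'(z^\dagger) = \begin{pmatrix} 0 & A(BA + C) \\ 0 & BA + C \end{pmatrix}.
\]
Reading the definition \eqref{eq:N-operator} of $N$ as $M^\intercal H^{-1} M - K = N^\dagger$, we obtain the cancellation
\[
BA + C \;=\; -(N^\dagger)^{-1}\bigl(M^\intercal H^{-1} M - K\bigr) + I \;=\; -(N^\dagger)^{-1} N^\dagger + I \;=\; 0.
\]
This identity is the heart of the argument: the dual scaling $N^{-1}$ in Alt-NDA is engineered precisely so as to annihilate the $x$--$y$ coupling that the inner primal Newton step leaves behind, and this is what makes the extra primal sweeps in $(X)^J$ cost-free for the local rate. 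Hence $X' Y' X' = 0$ at $z^\dagger$, and consequently $U_J'(z^\dagger) = X'(z^\dagger)^{J-1}\bigl(X' Y' X'\bigr)(z^\dagger) = 0$ for every $J \geq 1$.

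Finally, $U_J$ is composed of maps that are continuously differentiable in a neighborhood of $z^\dagger$ and twice differentiable at $z^\dagger$ under Assumption \ref{ass:Hessian} and the local invertibility of $N$, so Lemma \ref{lem:ortega} applies with $S = U_J$ and delivers an open neighborhood $\mathfrak{N}_{U_J}$ on which the iterates converge to $z^\dagger$ at least quadratically. The only real obstacle is the block-matrix identity $BA + C = 0$; everything else is routine chain-rule bookkeeping and a direct appeal to Lemma \ref{lem:ortega}.
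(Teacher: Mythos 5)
Your proposal is correct and follows essentially the same route as the paper's proof: verify the fixed point, compute the block Jacobians $X'$ and $Y'$ at $(x^\dagger,y^\dagger)$ using first-order stationarity, observe that the definition of $N$ in \eqref{eq:N-operator} forces the cancellation $BA+C=0$, and invoke Lemma \ref{lem:ortega}. The only cosmetic difference is that you dispose of the power $(X')^J$ by factoring out $X'Y'X'=0$ rather than by noting the idempotence of $X'(x^\dagger,y^\dagger)$ as the paper does; both are valid.
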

    Theorem \ref{thm:uj-local-quad} shows the local quadratic convergence of $\{(x^{k}, y^{k})\}_{k\ge 0}$ generated by $U_J$. We note that $U_J^\prime(x^\dagger, y^\dagger) = X^\prime(x^\dagger, y^\dagger) Y^\prime(x^\dagger, y^\dagger) X^\prime(x^\dagger, y^\dagger) = 0$ holds for any $J\ge1$. Thus, when taking $J=1$ in $U_J$, two Newton's steps on $x$ in each iteration are enough to ensure a local quadratic rate.
    
    The assumption that $S$ is locally continuously differentiable
    is stronger than the local Lipschitz Hessian condition used in \cite{zhang2020newton}. Also, our updates in \eqref{eq:ms-newton} require one more minimization step per iteration compared to Alt-NDA. However, the proof of Theorem \ref{thm:uj-local-quad} is interesting as it is operator-based and significantly shorter than \cite{zhang2020newton}. Our result also generalizes previous works \cite{tapia1977diagonalized,byrd1978local} that have shown superlinear convergence of multistep Newton's update  for constrained optimization problems with Lagrangian functions. However, unfortunately, we lose the superlinear rates for the distributed hybrid methods due to the errors introduced by the distributed approximations.

\subsection{Newton's Method and its Cubic-Rate Modification}\label{sect:local-cubic}
We now recall the standard Newton's method and its local quadratic rate. Let $z=(x;y)\in\RR^{d+p}$ be the concatenation of $x$ and $y$ by column, and $\Lambda(z) = (\nabla_x L(x,y);
    \nabla_y L(x,y)):\RR^{d+p}\to\RR^{d+p}
$ be a gradient operator.
The first-order stationarity gives $\Lambda(z^\dagger)=0$. Thus, finding a first-order stationary point is equivalent to finding the root $z^\dagger$ of the system. By applying Newton's method to this root finding problem with $\nabla\Lambda(z) = \big(\begin{smallmatrix}
    \nabla_{xx}^2 L(x,y) & \nabla_{xy}^2 L(x,y)\\
    \nabla_{yx}^2 L(x,y)  & \nabla_{yy}^2 L(x,y)   
\end{smallmatrix}\big)$, we have the updates, 
\#\label{eq:z-Newton}
z^{k+1} = z^k - [\nabla\Lambda(z^k)]^{-1} \Lambda(z^k).
\#
To simplify the notation, let $\nabla_{xx}^2L$ denote $\nabla_{xx}^2L(x,y)$ and similarly for $\nabla_{xy}^2L$, $\nabla_{yx}^2L$, and $N$. By  Schur complement, with $(\nabla\Lambda)^{-1}_{11} = (\nabla_{xx}^2 L)^{-1} - (\nabla_{xx}^2 L)^{-1}(\nabla_{xy}^2 L)N^{-1}(\nabla_{yx}^2 L)(\nabla_{xx}^2 L)^{-1}$ and $(\nabla\Lambda)^{-1}_{12} = (\nabla_{xx}^2 L)^{-1}(\nabla_{xy}^2 L)N^{-1}$, the inverse $(\nabla \Lambda)^{-1}$ is given by,
\#\label{eq:schur-Hessian}
(\nabla \Lambda)^{-1} = \begin{pmatrix}
    (\nabla\Lambda)^{-1}_{11} & (\nabla\Lambda)^{-1}_{12} \\
    [(\nabla\Lambda)^{-1}_{12}]^{\intercal}  & -N^{-1}   
\end{pmatrix}.
\#
Let $\nabla_{xx}^2L^k$  denote $\nabla_{xx}^2L(x^k,y^k)$, and similarly for $\nabla_{xy}^2L^k$, $\nabla_{yx}^2L^k$, and $N^k$. By substituting \eqref{eq:schur-Hessian} to \eqref{eq:z-Newton}, we obtain the $x$ and $y$ updates in the standard Newton's method. 

We remark that it requires matrix inverses $(\nabla_{xx}^2L^k)^{-1}$ and $(N^k)^{-1}$ in each iteration, which needs the same amount of matrix inverse computation as $(\nabla_{xx}^2L^k)^{-1}$ and $[N(x^{k+1},y^k)]^{-1}$ required by Alt-NDA, as discussed in Section \ref{sect:local-AltNDA}. 

Moreover, NDA introduced in Section \ref{sect:alg-GRAND} uses $\diag((\nabla_{xx}^2L)^{-1}, -N^{-1})$ as a diagonal approximation of the Hessian inverse $(\nabla\Lambda)^{-1}$ given by \eqref{eq:schur-Hessian}. It leaves out the off-diagonal parts and a complicated multiplication on $x$'s diagonal block. NDA saves computation in terms of multiplications; thus, it might not have a quadratic rate. 

As for distributed computing, we note that  $N^{-1}$ is the most intractable part when designing hybrid methods in Section \ref{sect:dist-opt-app}. A distributed approximation of the updates in \eqref{eq:z-Newton} is computationally expensive since $N^{-1}$ is involved in each block of $(\nabla\Lambda)^{-1}$. Thus, we instead consider approximations of NDA with simple distributed implementations for solving Problems \ref{prob:dual} and \ref{prob:nf-minimax}. The multiple steps of approximations were essential to enable an easy and distributed implementation in  Algorithms \ref{alg:dish} and \ref{alg:nf}. However, their errors made it impossible to achieve a local quadratic rate even when all updates are second-order.

\vskip4pt
\noindent{\bf Modified Newton's Method with Local Cubic Rates.} The most computationally expensive step in implementing Alt-NDA and the standard Newton's method is to calculate the matrix inverses $(\nabla_{xx}^2L)^{-1}$ and $(N)^{-1}$ at each iteration. We now provide a more efficient cubically converging implementation of Newton-type updates by reusing the matrix inverse computation. We modify the Newton's method in \eqref{eq:z-Newton} by reusing the inverse $[\nabla \Lambda(z)]^{-1}$ for two consecutive steps,
\$
& z^{k+\frac{1}{2}} = z^k - [\nabla\Lambda(z^k)]^{-1} \Lambda(z^k), \\
& z^{k+1} = z^{k+\frac{1}{2}} - [\nabla\Lambda(z^k)]^{-1} \Lambda(z^{k+\frac{1}{2}}).
\$
We only need to compute $[\nabla\Lambda(z^k)]^{-1}$ once per iteration in the above updates, which involves computing the matrix inverses $(\nabla_{xx}^2L^k)^{-1}$ and $(N^k)^{-1}$. Thus, the computational cost is equivalent to that of Alt-NDA and the standard Newton's method in each iteration.  
We can rewrite the updates as,  
\#\label{eq:z-cubic}
 z^{k+1} = z^k - [\nabla\Lambda(z^k)]^{-1} [\Lambda(z^k) + \Lambda(z^k - [\nabla\Lambda(z^k)]^{-1} \Lambda(z^k))].
\#
By substituting $[\nabla\Lambda(z^k)]^{-1}$ in \eqref{eq:schur-Hessian} to \eqref{eq:z-cubic}, we can obtain an update formula for $x^{k+1}$ and $y^{k+1}$. We omit it here for simplicity. 
The following theorem shows a local cubic convergence rate of updates in \eqref{eq:z-cubic}.
\begin{theorem}[Theorem 10.2.4 in \cite{ortega2000iterative}]\label{thm:cubic}
Suppose that there is an open ball 
$\cB(z^\dagger,\widetilde{r})\subset\RR^n$ and a constant $\ell_{\Lambda}>0$ such that $\nabla\Lambda(z)$ satisfies
$
\|\nabla\Lambda(z) - \nabla\Lambda(z^\dagger)\| \le \ell_{\Lambda}\|z- z^\dagger\|$ for any $z\in \cB(z^\dagger,\widetilde{r})
$.
Suppose that $\nabla\Lambda(z^\dagger)$ is nonsingular. Then the iterates $\{z^k\}_{k\ge0}$ converge to $z^\dagger$ with a cubic rate.
\end{theorem}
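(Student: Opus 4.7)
The plan is to analyze the update in \eqref{eq:z-cubic} as two half-steps sharing the same Jacobian, exploit the standard local quadratic rate of pure Newton for the first half-step, and then show that the second ``chord''-style half-step (applying a Newton correction at the new point but with the \emph{old} Jacobian $\nabla\Lambda(z^k)$) contributes an additional factor of $\|z^k-z^\dagger\|$, which combined yields a cubic error bound.

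First I would introduce the intermediate iterate $z^{k+\frac12}=z^k-[\nabla\Lambda(z^k)]^{-1}\Lambda(z^k)$ so that $z^{k+1}=z^{k+\frac12}-[\nabla\Lambda(z^k)]^{-1}\Lambda(z^{k+\frac12})$. Since $\nabla\Lambda(z^\dagger)$ is nonsingular and $\nabla\Lambda$ is Lipschitz in a ball around $z^\dagger$, a standard Banach-lemma argument gives constants $r_0\in(0,\widetilde r)$ and $M>0$ such that for every $z\in\cB(z^\dagger,r_0)$ the inverse $[\nabla\Lambda(z)]^{-1}$ exists and is bounded by $M$ in norm. Using $\Lambda(z^\dagger)=0$ and the identity
\begin{equation*}
z^{k+\frac12}-z^\dagger=[\nabla\Lambda(z^k)]^{-1}\!\left(\Lambda(z^\dagger)-\Lambda(z^k)-\nabla\Lambda(z^k)(z^\dagger-z^k)\right),
\end{equation*}
together with the integral remainder $\Lambda(z^\dagger)-\Lambda(z^k)=\int_0^1\nabla\Lambda(z^k+t(z^\dagger-z^k))(z^\dagger-z^k)\,dt$ and the Lipschitz hypothesis, I obtain the standard quadratic bound $\|z^{k+\frac12}-z^\dagger\|\le \tfrac{M\ell_\Lambda}{2}\|z^k-z^\dagger\|^2$.

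Next, for the second half-step, the key identity is
\begin{equation*}
z^{k+1}-z^\dagger=[\nabla\Lambda(z^k)]^{-1}\!\left(\nabla\Lambda(z^k)(z^{k+\frac12}-z^\dagger)-\Lambda(z^{k+\frac12})+\Lambda(z^\dagger)\right).
\end{equation*}
Writing $\Lambda(z^{k+\frac12})-\Lambda(z^\dagger)=\int_0^1\nabla\Lambda(z^\dagger+t(z^{k+\frac12}-z^\dagger))(z^{k+\frac12}-z^\dagger)\,dt$ and splitting
\begin{equation*}
\nabla\Lambda(z^k)-\nabla\Lambda(z^\dagger+t(z^{k+\frac12}-z^\dagger))=\bigl[\nabla\Lambda(z^k)-\nabla\Lambda(z^\dagger)\bigr]+\bigl[\nabla\Lambda(z^\dagger)-\nabla\Lambda(z^\dagger+t(z^{k+\frac12}-z^\dagger))\bigr],
\end{equation*}
the Lipschitz bound gives an integrand bounded by $\ell_\Lambda\|z^k-z^\dagger\|+\ell_\Lambda t\|z^{k+\frac12}-z^\dagger\|$. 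Integrating and multiplying by $M$ yields
\begin{equation*}
\|z^{k+1}-z^\dagger\|\le M\ell_\Lambda\|z^k-z^\dagger\|\,\|z^{k+\frac12}-z^\dagger\|+\tfrac{M\ell_\Lambda}{2}\|z^{k+\frac12}-z^\dagger\|^2.
\end{equation*}
Substituting the quadratic bound $\|z^{k+\frac12}-z^\dagger\|\le \tfrac{M\ell_\Lambda}{2}\|z^k-z^\dagger\|^2$ into both terms produces a leading-order term of order $\|z^k-z^\dagger\|^3$ and a higher-order term of order $\|z^k-z^\dagger\|^4$. For $\|z^k-z^\dagger\|$ sufficiently small, both are dominated by $C\|z^k-z^\dagger\|^3$ for some constant $C$ depending only on $M$ and $\ell_\Lambda$.

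Finally, I would close the argument by choosing $r\le r_0$ small enough that $Cr^2<1$, so that $\|z^{k+1}-z^\dagger\|\le C\|z^k-z^\dagger\|^3$ and, inductively, all iterates remain in $\cB(z^\dagger,r)$ whenever $z^0$ does. This yields the claimed local cubic convergence. The main obstacle I anticipate is the second half-step: because the Jacobian is evaluated at $z^k$ rather than $z^{k+\frac12}$, one does \emph{not} get a clean quadratic Newton bound for that step in isolation; the gain must come from the product of the ``stale Jacobian'' discrepancy $\|z^k-z^\dagger\|$ with the already-small $\|z^{k+\frac12}-z^\dagger\|$, and carefully accounting for both pieces inside the integral is the delicate part of the argument.
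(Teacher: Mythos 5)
Your argument is correct, but note that the paper does not actually prove this statement: it is imported verbatim as Theorem~10.2.4 of \cite{ortega2000iterative}, so there is no in-paper proof to compare against. What you have written is a self-contained derivation of the classical local analysis of the two-step Newton iteration with a frozen Jacobian (the $m=2$ Shamanskii/Traub scheme), and the structure is sound: the Banach-lemma bound $\|[\nabla\Lambda(z)]^{-1}\|\le M$ near $z^\dagger$, the quadratic estimate for the first half-step, the key identity
$z^{k+1}-z^\dagger=[\nabla\Lambda(z^k)]^{-1}\bigl(\nabla\Lambda(z^k)(z^{k+\frac12}-z^\dagger)-\Lambda(z^{k+\frac12})+\Lambda(z^\dagger)\bigr)$,
and the split of the stale-Jacobian discrepancy into two differences each taken against $\nabla\Lambda(z^\dagger)$ — which is exactly the right move, since the hypothesis only gives a center-Lipschitz condition at $z^\dagger$ rather than a two-point Lipschitz condition. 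The resulting bound $M\ell_\Lambda\|z^k-z^\dagger\|\,\|z^{k+\frac12}-z^\dagger\|+\tfrac{M\ell_\Lambda}{2}\|z^{k+\frac12}-z^\dagger\|^2$ and the induction keeping iterates in the ball are both fine. One small inconsistency: in the first half-step you quote the constant $\tfrac{M\ell_\Lambda}{2}$, but that constant requires the full Lipschitz bound along the segment from $z^k$ to $z^\dagger$; under the stated center-Lipschitz hypothesis the same splitting trick you use for the second half-step gives only $\|\nabla\Lambda(z^k+t(z^\dagger-z^k))-\nabla\Lambda(z^k)\|\le \ell_\Lambda(1-t)\|z^k-z^\dagger\|+\ell_\Lambda\|z^k-z^\dagger\|$, hence a constant $\tfrac{3}{2}M\ell_\Lambda$. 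This changes nothing qualitatively — the half-step is still quadratic and the composite step still cubic — but you should either weaken the constant or strengthen the hypothesis to a genuine Lipschitz condition on $\cB(z^\dagger,\widetilde r)$ for the stated constant to be literally correct.
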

Theorem \ref{thm:cubic} guarantees a much faster rate of updates in \eqref{eq:z-cubic} than Alt-NDA and the standard Newton's method, with the same computational cost in terms of the matrix inverse per iteration. This suggests we reuse the Hessian inverses and implement \eqref{eq:z-cubic} locally to achieve cubic rates in practice.

\section{Numerical Experiments}\label{sect:exp}
In this section, we conduct numerical experiments. For all problems and methods, we tune stepsizes and parameters by grid search
and select the optimal ones with the minimum number of iterations to reach a predetermined error threshold.

\subsection{Distribued Consensus Problems}\label{sect:exp-dish}
We implement DISH in Algorithm \ref{alg:dish} to solve distributed empirical risk minimization problems. We evaluate all methods on two setups, both with synthetic data. In each setup, we generate the underlying network by the Erd\H{o}s-R\'{e}nyi model with $n$ nodes and each edge
independently with probability $p$. Let $\deg_{\max} = \max_{i\in\cN}\{\deg(i)\}$ be the largest degree over the network and $Z$ be the consensus matrix with elements $z_{ii} = 1-\deg(i)/(\deg_{\max}+1)$ for $i\in\cN$, $z_{ij} = 1/(\deg_{\max}+1)$ for $\{i,j\}\in\cE$, and $z_{ij}=0$ otherwise. Let $\Theta_{i}\in \RR^{N_i\times d}$ and $v_{i}\in \RR^{N_i}$ be local feature matrix and label vector at agent $i$, respectively, and $\lambda\ge0$ be a penalty parameter. There are $N=\sum_{i\in\cN}N_i$ amount of data with local dataset size $N_i$. Let $\omega\in\RR^d$ be the decision variable. Here are the two setups. 
\vskip4pt
\emph{Setup 1: Distributed Linear Least Squares.} We study the problem that $
\min_{\omega} [(\sum_{i=1}^n  \|\Theta_{i} \omega - v_{i}\|^2)/(2N) + \lambda\|\omega\|^2/2]$ with $n=10$, $p=0.7$, $d=5$, $N_i= 50$ for $i\in\cN$, and $\lambda=1$. We generate  features $\hat \Theta_i\in\RR^{50\times 5}$, noises $u_i\in\RR^{50}$ for $i\in\cN$, and $\omega_0\in\RR^{5}$ from standard Normal distributions. We set $\Theta_i=\hat \Theta_i S$ with a scaling matrix $S=\diag\{10, 10, 0.1, 0.1, 0.1\}$ and generate $v_i\in\RR^{50}$ by $v_i = \Theta_i \omega_0 + u_i$ for $i\in\cN$.

\vskip4pt
\emph{Setup 2: Distributed Logistic Regression.} 
For $v_{i}\in \{0,1\}^{N_i}$ and $h_i= 1/(1+\exp(-\Theta_i\omega))$, we study the problem $\min_{\omega} [(\sum_{i=1}^n [- v_i^\intercal \log h_i - (1-v_i)^\intercal\log(1-h_i)])/N + \lambda\|\omega\|^2/2]$. We set $n=20$, $p=0.5$, $d=3$, $N_i= 50$ for $i\in\cN$, and $\lambda=1$. We generate $\hat \Theta_i\in\RR^{50\times 3}$, noises $u_i\in\RR^{50}$ for $i\in\cN$, and $\omega_0\in\RR^{3}$ from Normal distributions. We scale $\hat \Theta_i$ with $S=\diag\{10, 0.1, 0.1\}$ and set feature matrices to be $\Theta_i=\hat \Theta_i S$. Moreover, we generate $v_i\in\RR^{50}$ by the formula $v_i = \argmax(\text{softmax}(\Theta_i \omega_0 + u_i))$.

We compare EXTRA \cite{shi2015extra}, ESOM-$0$ \cite{mokhtari2016decentralized}, and different variants of DISH in Algorithm \ref{alg:dish} for the two setups. Let DISH-$K$ represent DISH with $K$ agents consistently performing Newton-type updates while others adopt gradient-type updates. DISH-G$\&$N denote DISH with all agents switching between gradient-type and Newton-type updates occasionally. In particular, DISH-G$\&$N-U and DISH-G$\&$N-LN denote agents changing their update types every $t_i$ iterations, where $t_i\sim U[5,50]$ and $t_i\sim\text{lognormal}(2,4)+30$, respectively. The initial updates for DISH-G$\&$N-U and DISH-G$\&$N-LN are uniformly sampled from $\{$`gradient-type', `Newton-type'$\}$. The error is measured by $\|x^k-x^\star \|/\|x^0-x^\star\|$, where  $x^\star$ is the optimal solution obtained by a centralized solver. In DISH, we fix $a_i=1$ for Newton-type updates to mimic the primal Newton's step.

\begin{figure}[!t]
  \centering
    \subfloat[][\small Least Squares in Setup 1]{%
         \includegraphics[width=0.4\linewidth]{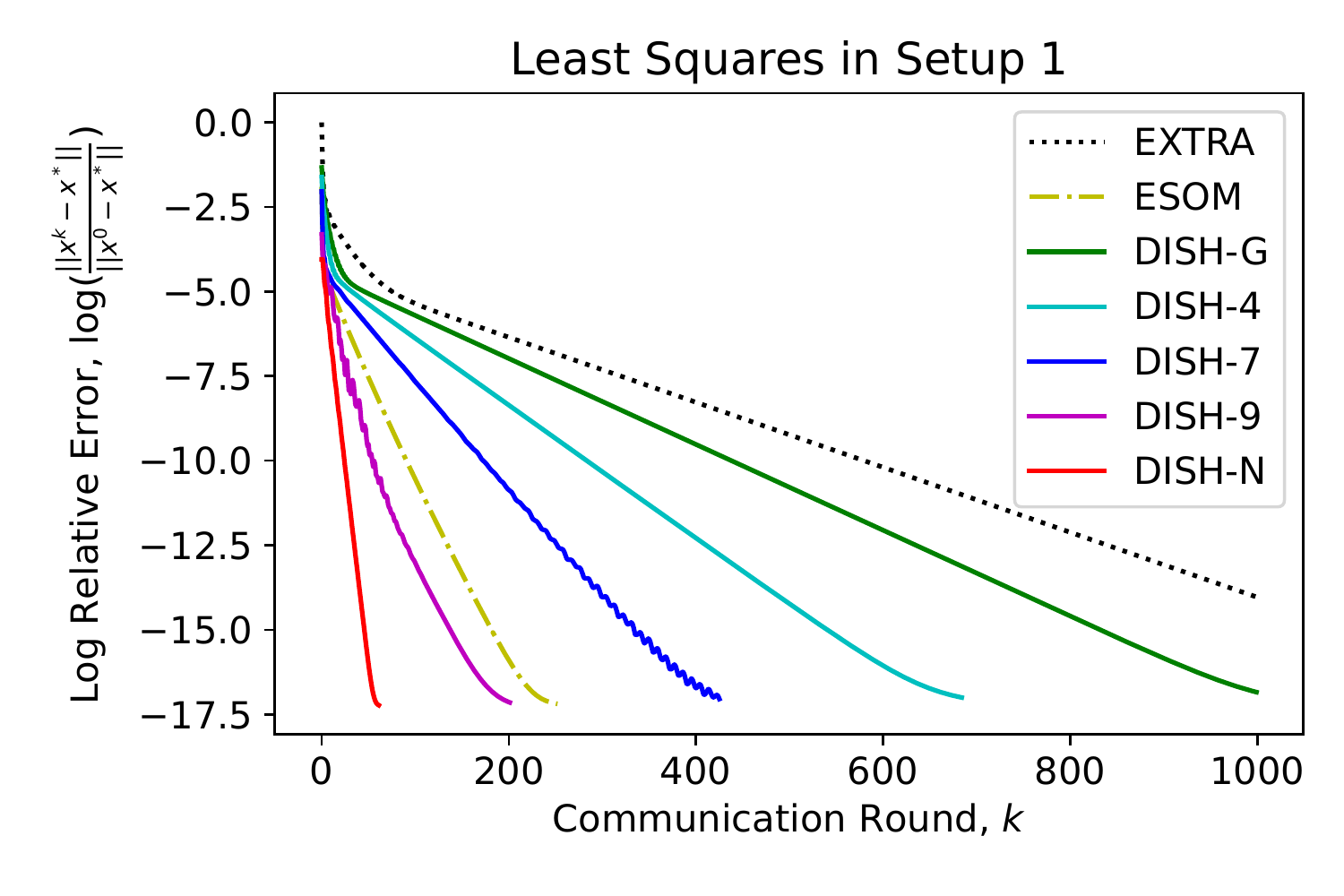}}
    \subfloat[][\small Logistic Regression in Setup 2]{%
         \includegraphics[width=0.4\linewidth]{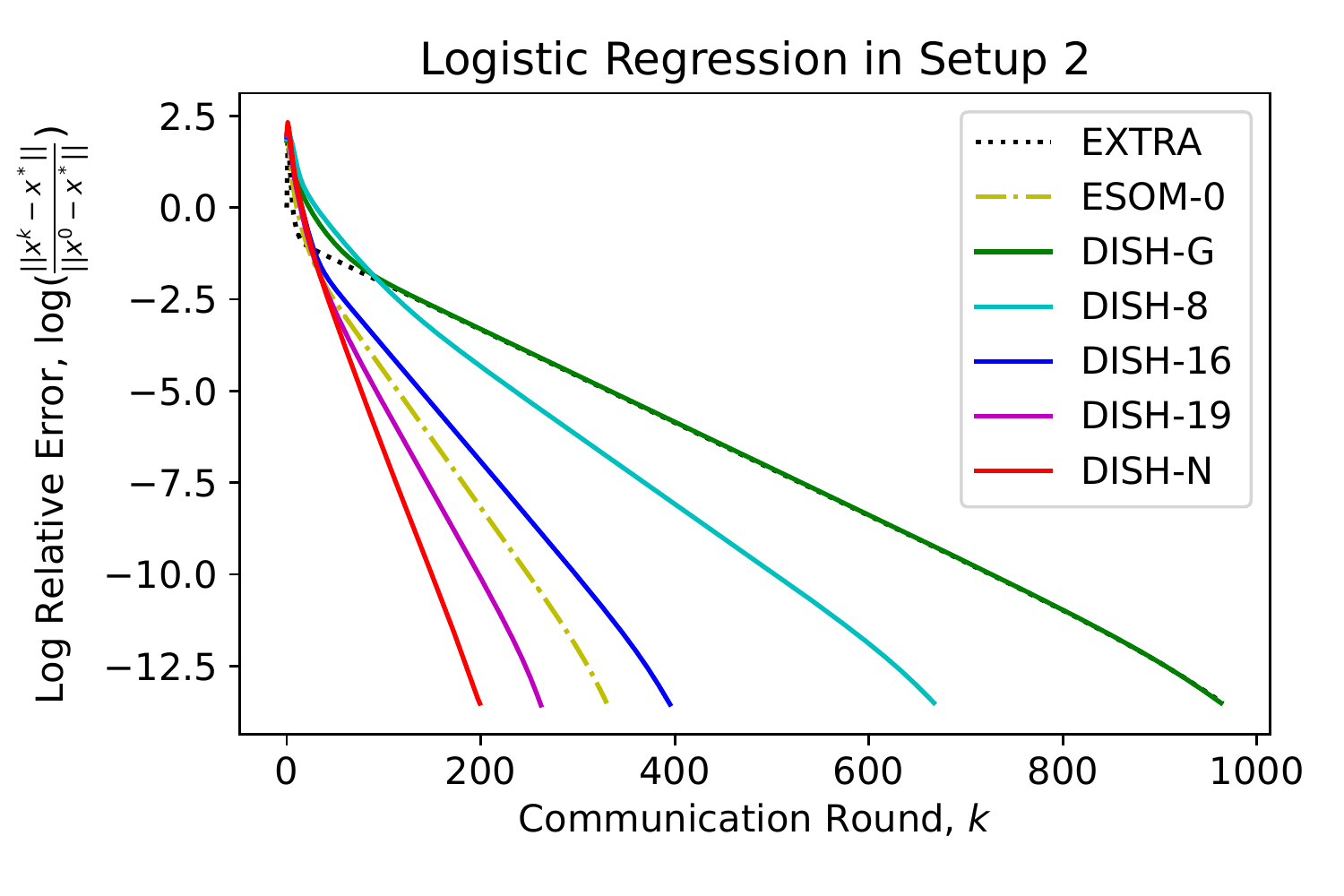}} \\
    \subfloat[][\small Least Squares in Setup 1]{%
         \includegraphics[width=0.4\linewidth]{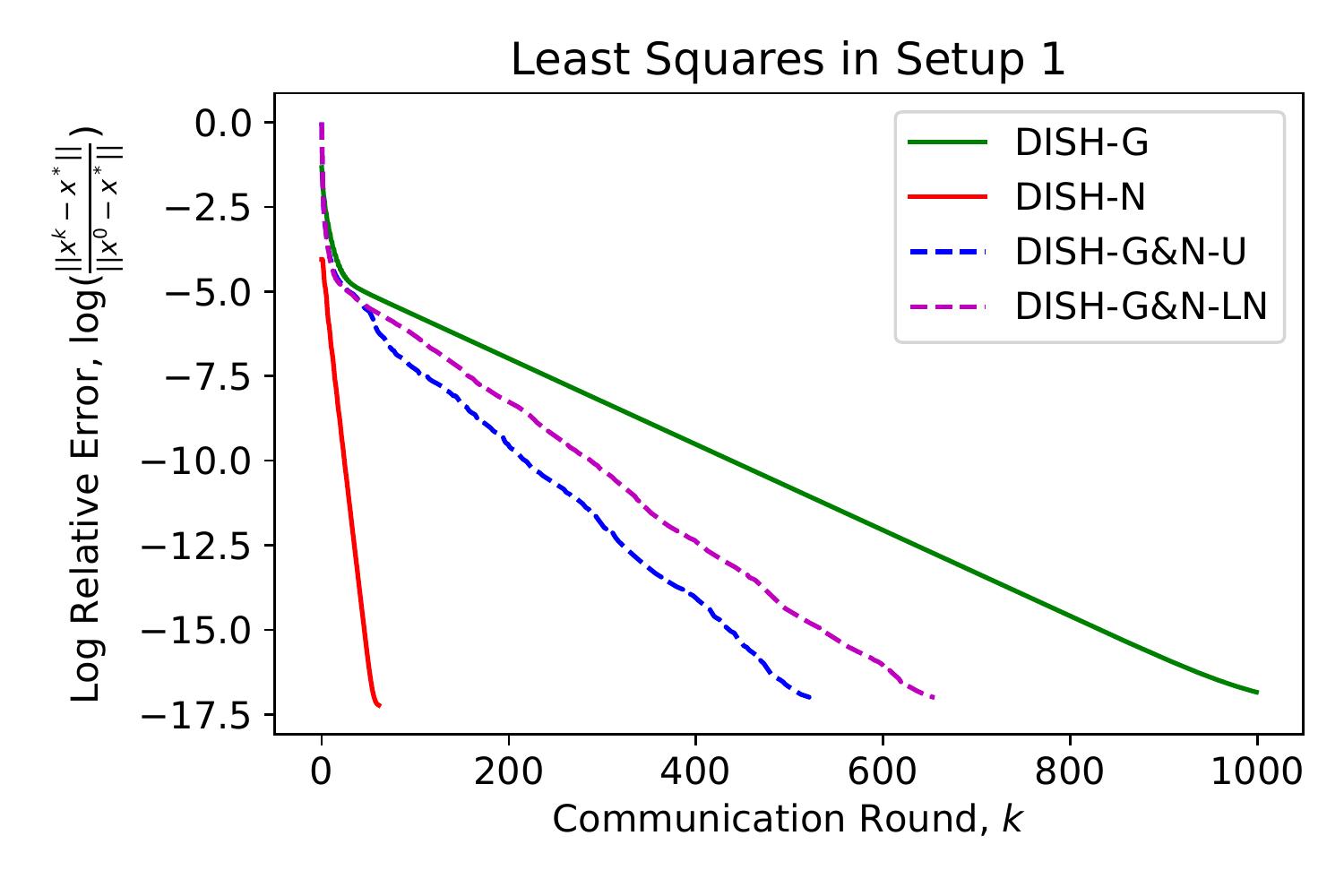}}
    \subfloat[][\small Logistic Regression in Setup 2]{%
         \includegraphics[width=0.4\linewidth]{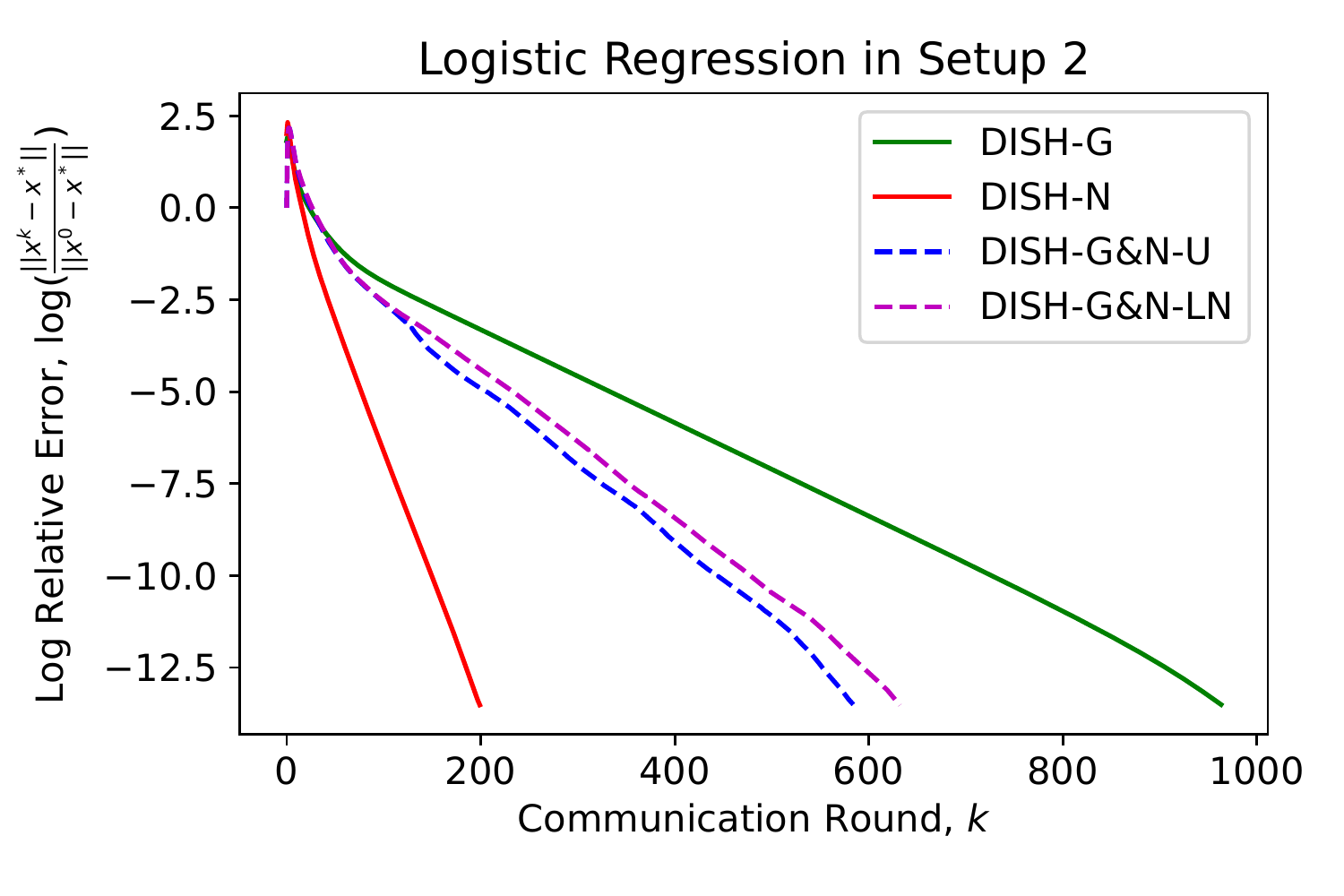}}
         \caption{Results of EXTRA, ESOM-$0$, DISH, and DISH-G\&N.}
       \label{fig:dish_1}
  \end{figure}

All methods in this study require one communication round with the same communication costs per iteration regardless of the update type. Figure \ref{fig:dish_1} depicts the number of communication rounds (iterations) on the $x$-axis and the logarithm of the relative error on the $y$-axis. The results demonstrate that DISH achieves linear performance regardless of the agents' choice of gradient-type and Newton-type updates, validating the theoretical guarantees presented in Theorem \ref{thm:PL}. 
Notably, the performance of the first-order methods, such as EXTRA and DISH-G, is similar. However, when some agents adopt Newton-type updates, DISH consistently outperforms the baseline method DISH-G, resulting in faster training. Specifically, DISH-N outperforms ESOM-$0$ in various scenarios, indicating the benefits of dual Hessian approximation in DISH-N. Additionally, increasing the number of agents performing Newton-type updates (denoted by $K$) tends to accelerate the convergence of DISH by leveraging more Hessian information. This observation suggests that in practical scenarios, agents with higher computational capabilities or cheaper computation costs can locally implement Newton-type updates to enhance the overall convergence speed of the system. 

\subsection{Network Flow Problems}

We evaluate the numerical performance of Algorithm \ref{alg:nf} on network flow problems. We generate an Erd\H{o}s-R\'{e}nyi network with $n=10$ nodes, where each edge exists with probability $p=0.4$. The resulting connected graph has $18$ edges. We study a network flow problem, $\min_x [\sum_{\{i,j\}\in\cE} (h_{ij}x_{ij} - v_{ij})^2]/2$ such that $Ex=s$, where $x\in\RR^{18}$ is the decision variable, and $E\in\RR^{10\times 18}$ is the incidence matrix of the graph. We generate vectors $h\sim\text{lognormal}(3,1), v\sim\text{lognormal}(1,1)\in\RR^{18}$, and $\widehat{s}\sim U(0,1)\in\RR^{10}$. To ensure feasibility, we set $s_i=\widehat{s}_i - (\sum_{i\in\cN}\widehat{s}_i)/n$ so that $\sum_{i\in\cN} s_i = 0$. 
\begin{figure}[!t]
    \centering
    \includegraphics[width=0.4\linewidth]{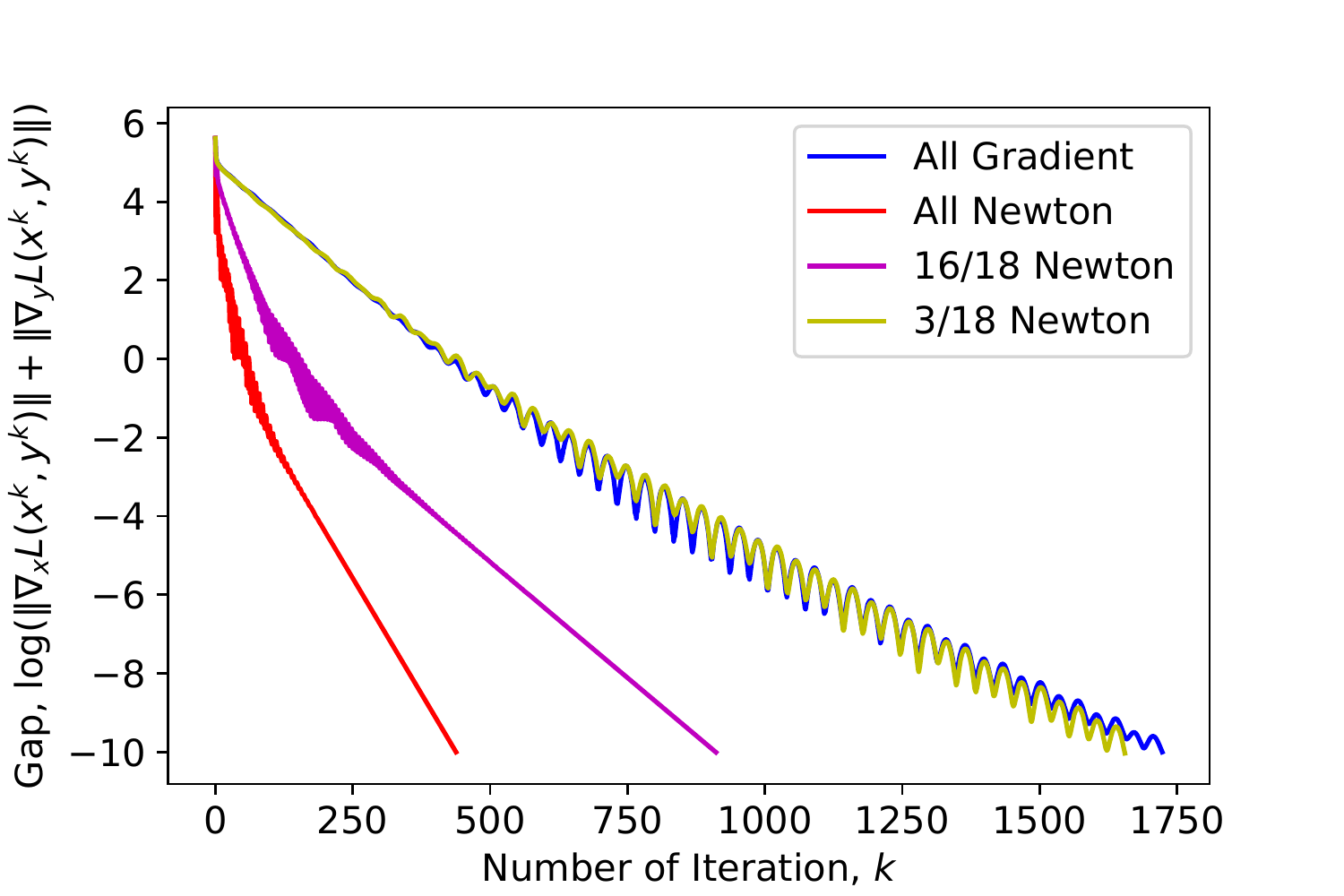}
    \caption{Network Flow Problems}
    \label{fig:NF}
\end{figure}
We evaluate Algorithm \ref{alg:nf} with different numbers of Newton-type edges to solve the problem. Specifically, we compare cases where all edges take gradient-type (or Newton-type) updates, and cases where $3$ or $16$ (out of $18$) edges take Newton-type updates. Figure \ref{fig:NF} illustrates the optimal performance of these cases, where the $y$-axis is the logarithm of the Lagrangian norms.
The results show the linear convergence of Algorithm \ref{alg:nf} regardless of the agents' update choices. Notably, the all-Newton case exhibits significantly faster convergence, highlighting the effectiveness of the diagonal approximation of the dual Hessian presented in Section \ref{sect:nf_nda}. Additionally, increasing the number of Newton-type agents generally improves the overall performance.
\subsection{Centralized Minimax Problems}\label{sect:exp-centralized}
We compare the performance of methods on centralized minimax problems. Specifically, we consider GDA and NDA within the GRAND framework and Alt-GDA and Alt-NDA within the Alt-GRAND framework. We also include the optimistic GDA (OGDA) \cite{mokhtari2020unified} for comparison, which incorporates negative momentum into the gradient updates. Given $A\in\RR^{n\times d}$, $b\in\RR^n$, $a>0$, and $\lambda>0$, we study the strongly-convex-concave problem that
$\max_{y\in\RR^d}\min_{x\in\RR^n}[(\|x\|^2/2 
+ b^\intercal x + x^\intercal Ay)/n - \lambda R_a(y)]$,
 where $R_a(y) = \sum_{i=1}^d[\log(1+\exp(ay_i)) + \log(1+\exp(-ay_i))]/a$ is Lipschitz smooth and convex.
The problem is a minimax reformulation \cite{du2019linear} of the linear regression problem with smoothed-$L_1$ regularization,
$\min_{\omega\in\RR^d} [\|A\omega-b\|^2/(2n) + \lambda R_a(\omega)]$. We use a California housing dataset for regression, with $d=9$, $n=14448$ (training samples), $a=10$, and $\lambda=1/n$.
We initialize all methods with $(x^0, y^0) = (0, 0)$ and measure optimality using Lagrangian gradient norms.

Figure \ref{fig:centralized} shows the optimal performance of the methods. OGDA outperforms GDA. NDA achieves much faster convergence compared to GDA and OGDA due to its utilization of second-order information. The alternating methods, Alt-GDA and Alt-NDA, perform better than their standard counterparts. Note that in GRAND methods, both $x$ and $y$ are updated simultaneously, while in Alt-GRAND methods, $y$ has to wait for the $x$ update. A rough estimate suggests that each iteration of Alt-GRAND takes twice as much time as GRAND. However, our results show that GRAND requires more than two times iterations to converge compared to Alt-GRAND. Thus, Alt-GRAND tends to be more time-efficient overall. 

In the second experiment, we compare the local performance of Newton-type methods discussed in Section \ref{sect:local-Newton}. We study the strongly-convex-concave problem $\max_{y\in\RR^d}\min_{x\in\RR^n}[\|x\|_4^2/2 
+ b^\intercal x + x^\intercal Ay - \lambda R_a(y)]$, which is a minimax reformulation of the problem $\min_{\omega}[\|A\omega-b\|^2_{4/3}/2 + \lambda R_a(\omega)]$. We set $n=5$, $d=20$, $a=10$, and $\lambda=1$, and generate each row of $A$ as $A_i\sim \cN(0, I_d)$ and $b\sim\text{lognormal}(0,1)$. We run Newton-type methods, including NDA, Alt-NDA, Twostep-Alt-NDA in \eqref{eq:ms-newton} with $J=1$, Newton's method, and the cubic method in \eqref{eq:z-cubic}.
Starting from a point close to the optimal solution obtained by running Alt-GDA for a few rounds, we continue until the gradient norms reach machine precision. We tune the stepsizes in NDA using grid search while fixing the stepsizes in the other methods to be $1$. Figure \ref{fig:centralized} shows the results of this experiment. NDA exhibits linear rates, while the other methods achieve much faster convergence.  The cubic method outperforms the others though the cubic rate is 
not obvious. Also, the cubic method is more sensitive to the initial point than the others. 

\begin{figure}[!t]
  \centering
    \subfloat[][\small (Alt-)GRAND and OGDA]{%
      \includegraphics[width=0.4\linewidth]{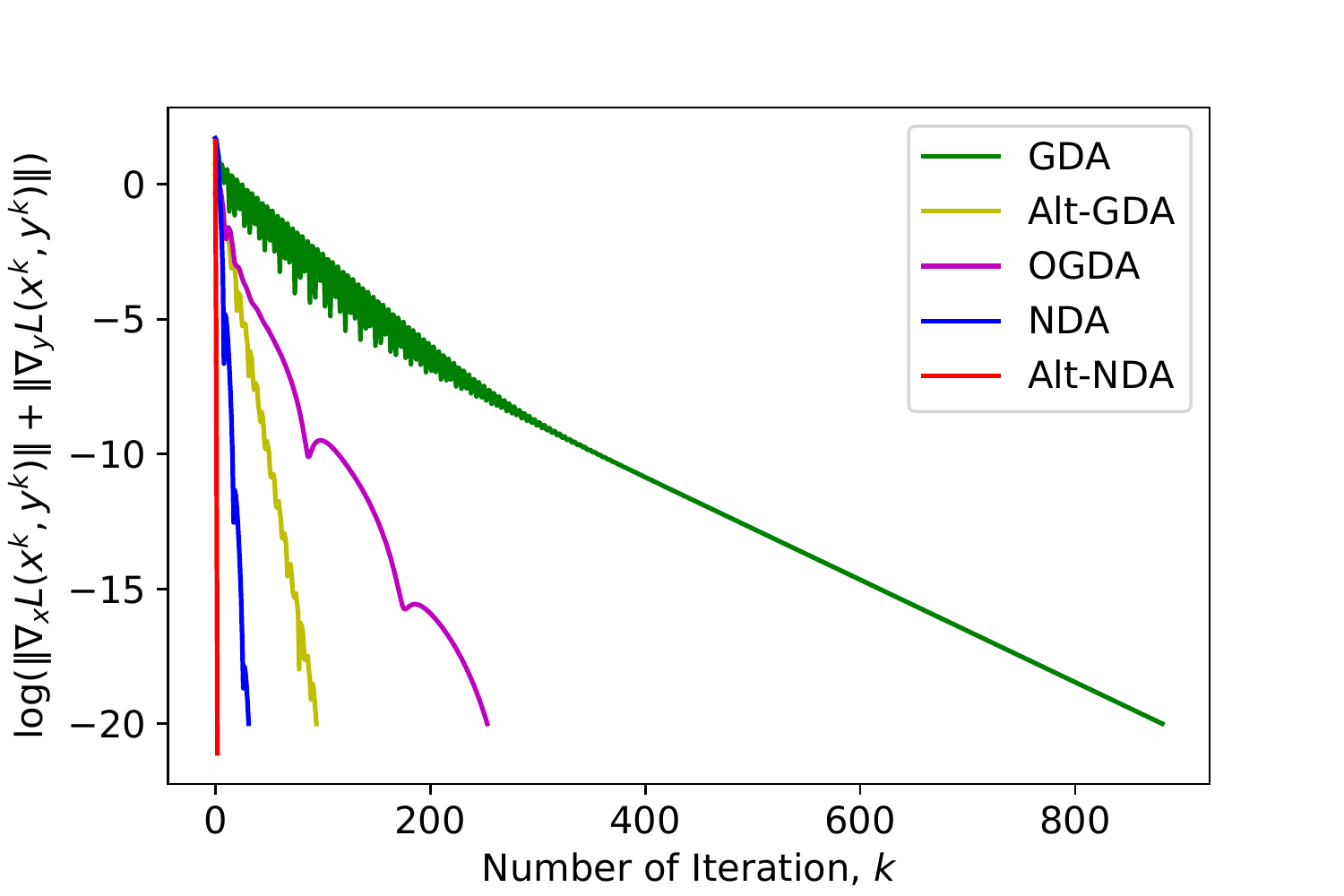}}
    \subfloat[][\small Newton-Type Methods]{%
       \includegraphics[width=0.4\linewidth]{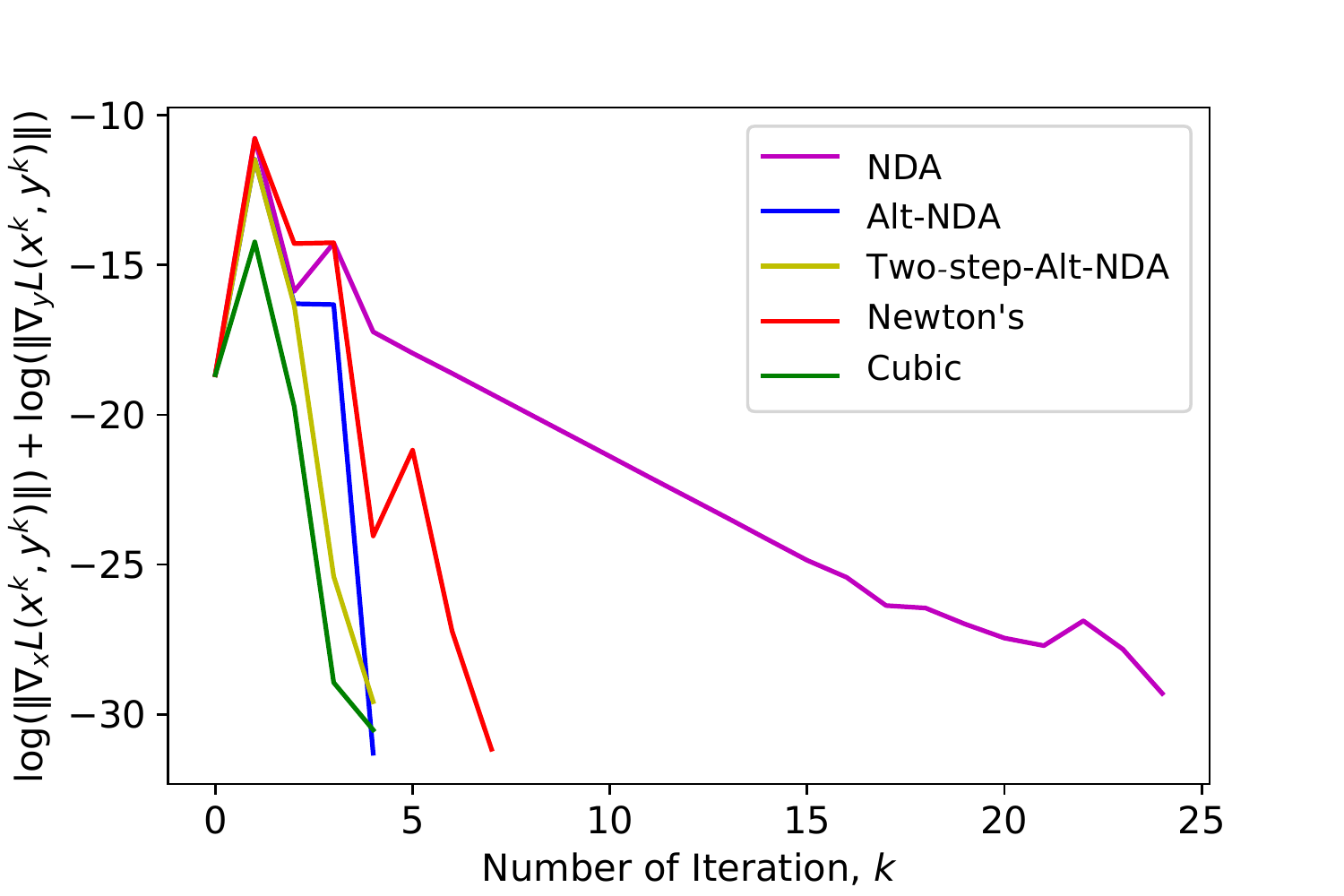}}
         \caption{Centralized Minimax Problems.}
       \label{fig:centralized}
  \end{figure}

\section{Conclusions}
This work proposes DISH, a distributed hybrid method that leverages agents' computational heterogeneity. DISH allows agents to choose between gradient-type and Newton-type updates, improving overall efficiency. GRAND is introduced to analyze the performance of methods with general update directions. Theoretical analysis shows global rates for GRAND, ensuring linear convergence for DISH. Future work directions include applications to nonconvex and stochastic settings. 

\bibliographystyle{IEEEtran}
\bibliography{ref}

\appendix

\section{Proofs in Section \ref{sect:dist-opt-app}}
We now present the proofs of lemmas and theorems in the appendix.
\begin{proof}[Proof of Lemma \ref{lem:psi-grad-hess}]
    We prove Lemma \ref{lem:psi-grad-hess} under Assumption \ref{ass:Hessian}.
    The definition of $x^*(y)$ in \eqref{eq:x_star_y} and the first-order optimality condition of the minimization problem give that, for any $y\in\RR^p$, 
    \#\label{eq:opt_x_star_y}
    0 = \nabla_xL(x,y)\given_{x = x^*(y)} = \nabla_xL(x^*(y),y).
    \#
    Since $\nabla_x L(x,y)$ is continuously differentiable relative to $(x,y)$ and $\nabla_{xx}^2 L(x,y)\succ 0$ for any $(x,y)$ under Assumption \ref{ass:Hessian}, by the implicit function theorem, the function $x^*(y)$ is continuously differentiable relative to $y$. Its derivative $\nabla x^*(y)$ is determined by differentiating \eqref{eq:opt_x_star_y} on both sides, 
    \$
    0 = \nabla_{xx}^2L(x^*(y),y)\nabla x^*(y) + \nabla_{xy}^2L(x^*(y),y).
    \$
    Since $\nabla_{xx}^2L(x^*(y),y)$ is always invertible under Assumption \ref{ass:Hessian}, by rearranging the terms, we have 
    \#\label{eq:implic_func}
    \nabla x^*(y) = - [\nabla_{xx}^2L(x^*(y),y)]^{-1}\nabla_{xy}^2L(x^*(y),y).
    \#
    Moreover, the definition of $\psi(y)$ in \eqref{eq:x_star_y} and the chain rule give that
    \$
    \nabla \psi(y) &= \diff{L(x^*(y),y)}{y}\notag\\
    & = \big(\nabla x^*(y)\big)^\intercal\nabla_x L(x^*(y),y) + \nabla_y L(x^*(y),y) \notag\\
    & = \nabla_y L(x^*(y),y),
    \$
    where the last equality is due to \eqref{eq:opt_x_star_y}. By further differentiating relative to $y$ on both sides, we have
    \$
    \nabla^2\psi(y) = \nabla_{yx}^2 L(x^*(y),y) \nabla x^*(y) + \nabla_{yy}^2 L(x^*(y),y).
    \$
    Finally, by substituting \eqref{eq:implic_func} in the preceding equation, we have
    \$
    \nabla^2\psi(y) = - \nabla_{yx}^2 L(x^*(y),y) [\nabla_{xx}^2L(x^*(y),y)]^{-1}\nabla_{xy}^2L(x^*(y),y) + \nabla_{yy}^2 L(x^*(y),y).  
    \$
    This concludes the form of $\nabla \psi(y)$ and $\nabla^2 \psi(y)$.
\end{proof} 

\begin{proof}[Proof of Proposition \ref{prop:feature}]
    By introducing $z_i = \Theta_i \xi_i \in \RR^N$ for $i\in\cN$, we rewrite Problem \ref{prob:feature} as,
    \#\label{prob:feature_rewrite}
    \min_{\xi\in\RR^d} \phi\big(\sum_{i=1}^n z_i\big) + \sum_{i=1}^n r_i(\xi_i), \quad \text{s.t. } z_i = \Theta_i \xi_i, \text{ for } i\in\cN.
    \#
    Then we define $x_i\in\RR^N$ as the dual variable associated with the constraint $z_i = \Theta_i \xi_i $ at agent $i$ for $i\in\cN$. 
    We define the Lagrangian function $L^{\sf fp}(\xi,x)$ of Problem \ref{prob:feature_rewrite} as follows,
    \$
    L^{\sf fp}(\xi,x) = \phi\big(\sum_{i=1}^n z_i\big) + \sum_{i=1}^n r_i(\xi_i) + \sum_{i=1}^n x_i^\intercal (\Theta_i \xi_i - z_i).
    \$
    Since both $\phi$ and $r$ are convex, strong duality holds for Problem \ref{prob:feature_rewrite} by the Slater’s condition. Thus, Problem \ref{prob:feature_rewrite} is equivalent to the following dual problem,
    \#\label{prob:dual-feature}
    \max_{x_1,\cdots,x_n} \min_{\xi} L^{\sf fp}(\xi,x).
    \#
    By straightforward calculation, we have
    \#\label{eq:dual-feature}
    \min_{\xi} L(\xi,x) & = \min_{\xi}\Big[ \phi\big(\sum_{i=1}^n z_i\big) + \sum_{i=1}^n r_i(\xi_i) + \sum_{i=1}^n x_i^\intercal (\Theta_i \xi_i - z_i)\Big] \notag \\
    & = \min_{\xi}\Big[ \phi\big(\sum_{i=1}^n z_i\big) - \sum_{i=1}^n x_i^\intercal z_i \Big] + \sum_{i=1}^n\min_{\xi}\big[ r_i(\xi_i)  + x_i^\intercal\Theta_i \xi_i\big]
    \#
    We note that by the definition of the convex conjugate, we have
    \$
    &\min_{\xi}\Big[ \phi\big(\sum_{i=1}^n z_i\big) + \sum_{i=1}^n x_i^\intercal z_i \Big] = \begin{cases} - \phi^*(x_1),  \quad \text{if } x_1=\cdots=x_n\\
        -\infty,  \quad \text{otherwise}
    \end{cases}, \\
    &\min_{\xi}\big[ r_i(\xi_i)  + x_i^\intercal\Theta_i \xi_i\big] = -r_i^*(-\Theta_i^\intercal x_i).
    \$
    Thus, by substituting the preceding relation in \eqref{eq:dual-feature}, Problem \ref{prob:dual-feature}, as well as Problem \ref{prob:feature_rewrite}, is equivalent to the following problem,
    \$
    \max_{x_1,\cdots,x_n} - \phi^*(x_1) - \sum_{i=1}^n r_i^*(-\Theta_i^\intercal x_i), \quad \text{s.t. } x_1 = \cdots = x_n.
    \$
    By flipping the sign, the above problem is equivalent to the problem stated in the proposition.
    \end{proof}

    \section{Proofs in Section \ref{sect:global}}
    \begin{proof}[Proof of Lemma \ref{lem:psi_property}]
        Due to the definition of $x^*(y)$ in \eqref{eq:x_star_y} and the first-order optimality condition of the minimization problem, for any $y\in\RR^p$, we have 
        \$
        0 = \nabla_xL(x,y)\given_{x = x^*(y)} = \nabla_xL(x^*(y),y).
        \$
        To show the Lipschitz continuity of $\nabla \psi(y)$, we first prove that $x^*(y)$ as a function of $y$ is $(\ell_{\sf xy}/m_{\sf x})$-Lipschitz continuous.
        For any $y,z\in\RR^p$, by the preceding equation, we have
        \$
        0 & = \nabla_xL(x^*(y),y) - \nabla_xL(x^*(z),z) \\
        & = \nabla_xL(x^*(y),y) - \nabla_xL(x^*(z),y) + \nabla_xL(x^*(z),y) - \nabla_xL(x^*(z),z).
        \$
        By rearranging the terms in the above equation, we have 
        \$
        \|\nabla_xL(x^*(y),y) - \nabla_xL(x^*(z),y)\| & = \|\nabla_xL(x^*(z),y) - \nabla_xL(x^*(z),z)\|.
        \$
        Therefore, for any $y,z\in\RR^p$, by the $m_{\sf x}$-strong convexity of $L(x, y)$ with respect to $x$, we have
        \#\label{eq:x_star_lip}
        \|x^*(y) - x^*(z)\| &\le \frac{1}{m_{\sf x}}\|\nabla_xL(x^*(y),y) - \nabla_xL(x^*(z),y)\| \notag\\
        & = \frac{1}{m_{\sf x}} \|\nabla_xL(x^*(z),y) - \nabla_xL(x^*(z),z)\| \notag\\
        &\le \frac{\ell_{\sf xy}}{m_{\sf x}} \|y-z\|,
        \#
        where the equality is due to the preceding equation and the inequality holds since $\nabla_x L(x^*(y),y)$ is $\ell_{\sf xy}$-Lipschitz continuous on $y$ under Assumption \ref{ass:Hessian}.
        Thus, Lemma \ref{lem:psi-grad-hess} yields for any $y,z\in\RR^p$,
        \$
        \|\nabla \psi(y) - \nabla\psi(z)\| & = \|\nabla_y L(x^*(y),y) - \nabla_y L(x^*(z),z)\| \\
        & \le \|\nabla_y L(x^*(y),y) - \nabla_y L(x^*(y),z)\| + \|\nabla_y L(x^*(y),z) - \nabla_y L(x^*(z),z)\| \\
        &\le \ell_{\sf yy}\|y-z\| + \ell_{\sf yx}\|x^*(y)-x^*(z)\| \\
        &\le \big(\ell_{\sf yy} + \frac{\ell_{\sf yx}\ell_{\sf xy}}{m_{\sf x}}\big)\|y-z\|,
        \$
        where the second inequality holds due to the $\ell_{\sf yy}$ and $\ell_{\sf yx}$-Lipschitz continuity of $\nabla_y L(x,y)$ with respect to $y$ and $x$, respectively, and the last inequality follows from the $(\ell_{\sf xy}/m_{\sf x})$-Lipschitz continuity of $x^*(y)$ in \eqref{eq:x_star_lip}.
        \end{proof}

        \begin{proof}[Proof of Lemma \ref{lem:descent_direct}]
            We first show the result for $s^k$. Under Assumption \ref{ass:descent_direct}, we have
            \$
            (s^k)^\intercal\nabla_x L(x^k,y^k) &\ge \frac{1}{\Gamma_s}\|s^k\|^2 \\
            &\ge \frac{1}{\Gamma_s} (\sqrt{\gamma_s\Gamma_s} \|\nabla_x L(x^k,y^k)\|)^2 \\
            & = \gamma_s \|\nabla_x L(x^k,y^k)\|^2.
            \$
            Moreover, it also holds that
            \$
            \frac{1}{\Gamma_s}\|s^k\|^2 \le (s^k)^\intercal\nabla_x L(x^k,y^k) \le \|s^k\| \|\nabla_x L(x^k,y^k)\|.
            \$
            By dividing $\|s^k\|$ on both sides of the above relation, we have
            \$
            \|s^k\|\le \Gamma_s\|\nabla_x L(x^k,y^k)\|.
            \$
            By combining the two relations, we have 
            \$
            \gamma_s \|\nabla_x L(x^k,y^k)\|^2 \le (s^k)^\intercal\nabla_x L(x^k,y^k) \le \|s^k\|\|\nabla_x L(x^k,y^k)\|\le \Gamma_s\|\nabla_x L(x^k,y^k)\|^2.
            \$
            Thus, we have $\gamma_s\le\Gamma_s$. This concludes the results for $s^k$. Since the assumptions and the results for $t^k$ have the same structure as those for $s^k$, the results also hold for $t^k$.
            \end{proof}
            
            \begin{proof}[Proof of Lemma \ref{lem:wx_sc}]
            Due to the definition of $x^*(y)$ in \eqref{eq:x_star_y} and the first-order optimality condition of the minimization problem, for any $y\in\RR^p$, we have 
            \$
            0 = \nabla_xL(x,y)\given_{x = x^*(y)} = \nabla_xL(x^*(y),y).
            \$
            Thus, due to the $m_{\sf x}$-strong convexity of $L(x,y)$ with respect to $x$ under Assumption \ref{ass:Hessian}, we have
            \$
            \big\|\nabla_xL(x, y)\big\| = \big\|\nabla_xL(x, y) - \nabla_xL(x^*(y), y)\big\| \ge m_{\sf x}\|x - x^*(y)\|.
            \$
            Therefore, by the explicit form of $\nabla \psi(y)$ in Lemma \ref{lem:psi_property} and the $\ell_{\sf yx}$-Lipschitz continuity of $\nabla_yL(x,y^k)$ with respect to $x$ under Assumption \ref{ass:Hessian}, we have
            \$
            \big\|\nabla_y L(x^k,y^k) - \nabla \psi(y^k)\big\|& =\|\nabla_y L(x^k,y^k)-\nabla_y L(x^*(y^k),y^k)\|  \\
            & \le \ell_{\sf yx}\|x^k-x^*(y^k)\| \\
            &\le\frac{\ell_{\sf yx}}{m_{\sf x}}\big\|\nabla_xL(x^k, y^k)\big\|,
            \$
            where the last inequality is due to the preceding relation.
            \end{proof}
            
            \begin{proof}[Proof of Lemma \ref{lem:de_lambda}]
            By the inequality that $\|a+b\|^2 \le (1+\upsilon)\|a\|^2 + (1+1/\upsilon)\|b\|^2$ for any $a,b\in\RR^p$ and $\upsilon>0$, we have
            \$
            \big\|\nabla_y L(x^k,y^k)\big\|^2
            & \le (1+\upsilon) \big\|\nabla \psi(y^k)\big\|^2 + \big(1+\frac{1}{\upsilon}\big) \big\|\nabla_y L(x^k,y^k)-\nabla \psi(y^k)\big\|^2 \\
            & \le (1+\upsilon) \big\|\nabla \psi(y^k)\big\|^2 + \big(1+\frac{1}{\upsilon}\big)\frac{\ell_{\sf yx}^2}{m_{\sf x}^2}\big\|\nabla_xL(x^k,y^k)\big\|^2,
            \$
            where the last inequality follows from Lemma \ref{lem:wx_sc}. Moreover, the dual update in Algorithm \ref{alg:grand} gives
            \$
            \|y^{k+1} - y^k\|^2 & = \beta^2\|t^k\|^2  \le \beta^2\Gamma_t^2\big\|\nabla_y L(x^k,y^k)\big\|^2 \\
            & \le 2\beta^2\Gamma_t^2 \big\|\nabla \psi(y^k)\big\|^2 + \frac{2\beta^2\Gamma_t^2\ell_{\sf yx}^2}{m_{\sf x}^2}\big\|\nabla_xL(x^k,y^k)\big\|^2,
            \$
            where the inequalities follow from Lemma \ref{lem:descent_direct} and  the preceding relation with $\upsilon=1$.
            \end{proof}

            \begin{proof}[Proof of Proposition \ref{prop:gL}]
                Due to the $\ell_\psi$-Lipschitz continuity of $\nabla\psi$ in Lemma \ref{lem:psi_property} \cite{nesterov2018lectures}, we have
                \#\label{eq:step1}
                \psi(y^{k+1}) &\ge \psi(y^k) + \inner{\nabla \psi(y^{k})}{y^{k+1} - y^k} - \frac{\ell_\psi}{2}\|y^{k+1} - y^k\|^2 \notag \\
                & = \psi(y^k) + \beta\inner{\nabla \psi(y^{k})}{t^k} - \frac{\ell_\psi}{2}\|y^{k+1} - y^k\|^2,
                \#
                where the equality is due to the $y$-update in Algorithm \ref{alg:grand}. We define a constant $\rho=\Gamma_t^2/\gamma_t$.
                We now lower bound the second term $\langle {\nabla \psi(y^{k})},{t^k}\rangle$ in \eqref{eq:step1}. We first consider the case when $\gamma_t<\Gamma_t$. In this case, we have,
                \#\label{eq:t_rho_pyL}
                \|t^k- \rho \nabla_{y}L(x^k,y^k)\|^2 & = \|t^k\|^2 - 2 \rho (t^k)^\intercal \nabla_{y}L(x^k,y^k) + \rho^2\|\nabla_{y}L(x^k,y^k)\|^2 \notag \\
                & \le (\Gamma_t^2 - 2\rho \gamma_t + \rho^2)\|\nabla_{y}L(x^k,y^k)\|^2 \notag \\
                & \le (\Gamma_t^2 - 2\rho \gamma_t + \rho^2)\big[(1+\nu)\|\nabla \psi(y^k)\|^2 + (1+\frac{1}{\nu})\frac{\ell_{\sf yx}^2}{m_{\sf x}^2}\|\nabla_xL(x^k,y^k)\|^2\big] \notag \\
                & = \frac{\rho^2}{(1+\nu)^2}\|\nabla \psi(y^k)\|^2 + \frac{\rho^2\ell_{\sf yx}^2}{\nu(1+\nu)^2m_{\sf x}^2}\|\nabla_xL(x^k,y^k)\|^2,
                \#
                where the inequalities are due to Lemma \ref{lem:descent_direct} and Lemma \ref{lem:de_lambda} with $\upsilon = \nu >0$, respectively, and the last equality is due to the definitions of $\nu$ and $\rho$. 
                Now we define positive constants $\eta_1 = \rho/(1+\nu)$ and $\eta_2 = \rho\nu/(1+\nu)$. We can lower bound $\langle {\nabla \psi(y^{k})},{t^k}\rangle$ by adding and subtracting some terms,
                \#\label{eq:lambdainner}
                & \langle\nabla \psi(y^{k}), t^k\rangle  = \langle\nabla \psi(y^{k}),t^k - \rho\nabla_{y}L(x^k,y^k)\rangle + \langle\nabla \psi(y^{k}), \rho\nabla_{y}L(x^k,y^k) - \rho\nabla \psi(y^{k})\rangle + \rho \|\nabla \psi (y^k)\|^2 \notag\\
                &\qquad \ge (\rho-\frac{\eta_1}{2} - \frac{\eta_2}{2})\|\nabla \psi(y^{k})\|^2 - \frac{1}{2\eta_1}\|t^k- \rho \nabla_{y}L(x^k,y^k)\|^2 - \frac{\rho^2}{2\eta_2}\|\nabla_{y}L(x^k,y^k) - \nabla \psi(y^{k})\|^2 \notag \\
                &\qquad \ge \big[\rho-\frac{\eta_1}{2} - \frac{\eta_2}{2} - \frac{\rho^2}{2\eta_1(1+\nu)^2}\big]\|\nabla \psi(y^{k})\|^2 - \big[\frac{\rho^2}{2\eta_1\nu(1+\nu)^2} + \frac{\rho^2}{2\eta_2}\big]\frac{\ell_{\sf yx}^2}{m_{\sf x}^2}\|\nabla_xL(x^k,y^k)\|^2 \notag \\
                &\qquad = c_1\|\nabla \psi(y^{k})\|^2 - (c_2\ell_{\sf yx}^2/m_{\sf x}^2)\|\nabla_xL(x^k,y^k)\|^2,
                \#
                where the first inequality is due to $2a^\intercal b\ge -\eta\|a\|^2 -\|b\|^2/\eta$ for any $a,b\in\RR^{p}$ and $\eta>0$, the last inequality is due to Lemma \ref{lem:wx_sc} and \eqref{eq:t_rho_pyL}, and the last equality is due to the definitions of constants $\eta_1$, $\eta_2$, $c_1$, and $c_2$. Moreover, when $\gamma_t = \Gamma_t$, it is easy to check that $\Gamma_t^2 - 2\rho \gamma_t + \rho^2=0$ and thus $\|t^k- \rho \nabla_{y}L(x^k,y^k)\|^2=0$ in \eqref{eq:t_rho_pyL}. Thus, the middle term on the second line in \eqref{eq:lambdainner} is zero. Thus, we can take $\eta_1=0$ and $\eta_2=\rho$ and verify that \eqref{eq:lambdainner} still holds. Furthermore, we lower bound the last term in \eqref{eq:step1} by Lemma \ref{lem:de_lambda},
                \#\label{eq:step1_last}
                - (\ell_\psi/2)\|y^{k+1} - y^k\|^2 \ge - \beta^2 \ell_\psi \Gamma_t^2 \|\nabla \psi(y^k)\|^2 - (\beta^2\ell_\psi\Gamma_t^2\ell_{\sf yx}^2/m_{\sf x}^2)\|\nabla_xL(x^k,y^k)\|^2.
                \#
                Thus, by substituting \eqref{eq:lambdainner} and \eqref{eq:step1_last} into \eqref{eq:step1}, we have, 
                \$
                \psi(y^{k+1}) &\ge \psi(y^k) +(c_1-\beta \ell_\psi\Gamma_t^2)\beta\|\nabla \psi(y^{k})\|^2 - [(c_2+\beta\ell_\psi\Gamma_t^2){\beta\ell_{\sf yx}^2}/{m_{\sf x}^2}] \|\nabla_xL({x}^k,y^k)\|^2.
                \$
                Therefore, by subtracting $\Xi_\psi$ and taking negative signs on both sides, we conclude the proof.
                \end{proof}

\begin{proof}[Proof of Proposition \ref{prop:Lg}]
We split $x$'s updated tracking error $\Delta_{x}^{k+1}$ defined in \eqref{eq:terrors} as follows,
\#\label{eq:deltaL}
& \Delta_{x}^{k+1} =  L(x^{k+1}, y^{k+1}) - L(x^*(y^{k+1}), y^{k+1}) \\
&\ = \underbrace{L(x^{k+1}, y^{k+1}) -L(x^{k+1}, y^{k})}_{\textstyle \text{term (A)}}+ \underbrace{L(x^{k+1}, y^{k}) -L(x^*(y^{k}), y^{k})}_{\textstyle \text{term (B)}} + \underbrace{L(x^*(y^{k}), y^{k}) - L(x^*(y^{k+1}), y^{k+1})}_{\textstyle \text{term (C)}}.\notag
\#
Here term (A) measures the change due to the $y$-update, term (B) characterizes $x$'s updated tracking error, and term (C) shows the difference between $y$'s optimality measures.We now upper bound terms (A)-(C).
                    
\noindent\textit{Term (A).} By the $\ell_{\sf yy}$-Lipschitz continuity of $\nabla_yL(x^{k+1},y)$ with respect to $y$ under Assumption \ref{ass:Hessian}, we have
\#\label{eq:termA}
& L(x^{k+1}, y^{k+1}) -L(x^{k+1}, y^{k}) \le (y^{k+1}-y^{k})^\intercal  \nabla_yL(x^{k+1}, y^k)+ \frac{\ell_{\sf yy}}{2}\|y^{k+1} - y^k\|^2.
\#
The last term in \eqref{eq:termA} is upper bounded by the $y$-update in Algorithm \ref{alg:grand} and Lemma \ref{lem:descent_direct} as follows,
\#\label{eq:termA_y}
\|y^{k+1} - y^k\|^2 = \beta^2\|t^k\|^2 \le {\beta^2\Gamma_t^2}\|\nabla_yL(x^k,y^k)\|^2.
\#
Next, we upper bound the first term in \eqref{eq:termA}. By the $y$-update in Algorithm \ref{alg:grand}, we have
\$
(y^{k+1}-y^{k})^\intercal \nabla_yL(x^{k+1}, y^k) 
&= \beta(t^k)^\intercal \nabla_yL(x^{k+1}, y^k)\notag\\
& = \beta(t^k)^\intercal\nabla_y L(x^k,y^k) + \beta(t^k)^\intercal \big[\nabla_yL(x^{k+1}, y^k) - \nabla_y L(x^k,y^k)\big] \notag\\
&\le \beta\|t^k\|\|\nabla_y L(x^k,y^k)\| + \frac{\beta}{2}\|t^k\|^2 + \frac{\beta}{2}\|\nabla_yL(x^{k+1}, y^k) - \nabla_y L(x^k,y^k)\|^2\ \notag\\
& \le \beta\Gamma_t \big(1 + \frac{\Gamma_t}{2}\big) \|\nabla_y L(x^k,y^k)\|^2 + \frac{\beta\ell_{\sf yx}^2}{2}\|{x}^{k+1} - {x}^{k}\|^2, 
\$
where the first inequality holds since $2a^\intercal b\le \|a\|^2 +\|b\|^2$ for any $a,b\in\RR^{p}$ and the last inequality is due to the $\ell_{\sf yx}$-Lipschitz continuity of $\nabla_yL(x,y)$ relative to $x$ under Assumption \ref{ass:Hessian}. The last term in the preceding relation is bounded by the $x$-update in Algorithm \ref{alg:grand} and Lemma \ref{lem:descent_direct} as follows,
\$
\|x^{k+1} - x^k\|^2 &=  \alpha^2\|s^k\|^2\le \alpha^2\Gamma_s^2\|\nabla_x L(x^k,y^k)\|^2.
\$
By substituting the above inequality into the previous one, the first term in \eqref{eq:termA} is bounded as,
\#\label{eq:termA_inter}
&(y^{k+1}-y^{k})^\intercal \nabla_yL(x^{k+1}, y^k) \le \beta\Gamma_t \big(1 + {\Gamma_t}/{2}\big) \|\nabla_y L(x^k,y^k)\|^2  + ({\alpha^2\beta\Gamma_s^2\ell_{\sf yx}^2}/{2})\|\nabla_x L(x^k,y^k)\|^2.
\#
Thus, by substituting \eqref{eq:termA_y}, \eqref{eq:termA_inter}, and constant $\iota_1$ into \eqref{eq:termA}, we bound term (A) as follows,
\#\label{eq:terma}
 L(x^{k+1}, y^{k+1}) -L(x^{k+1}, y^{k}) &\le \frac{\beta\iota_1}{2}\|\nabla_y L(x^k,y^k)\|^2 + \frac{\alpha^2\beta\Gamma_s^2\ell_{\sf yx}^2}{2}\|\nabla_x L(x^k,y^k)\|^2 \notag\\
&\le \beta\iota_1\Big[\frac{\ell_{\sf yx}^2}{m_{\sf x}^2}\|\nabla_xL(x^k,y^k)\|^2 + \|\nabla \psi(y^k)\|^2\Big] + \frac{\alpha^2\beta\Gamma_s^2\ell_{\sf yx}^2}{2}\|\nabla_x L(x^k,y^k)\|^2 \notag\\
&= \big(\frac{\alpha^2\Gamma_s^2}{2} + \frac{\iota_1}{m_{\sf x}^2}\big)\beta\ell_{\sf yx}^2\|\nabla_x L(x^k,y^k)\|^2 + \beta\iota_1 \|\nabla \psi(y^k)\|^2,
\#
where the last inequality follows from Lemma \ref{lem:de_lambda} with $\upsilon=1$. 

\noindent\textit{Term (B).} The $\ell_{\sf xx}$-Lipschitz continuity of $\nabla_x L(x,y^k)$ relative to $x$ under Assumption \ref{ass:Hessian} gives that
\$
L(x^{k+1}, y^{k}) &\le L(x^{k}, y^{k}) + \nabla_x L(x^{k}, y^{k})^\intercal(x^{k+1} - x^k) + \frac{\ell_{\sf xx}}{2}\|x^{k+1} - x^k\|^2\notag\\
&= L(x^{k}, y^{k}) - \alpha\nabla_x L(x^{k}, y^{k})^\intercal s^k + \frac{\alpha^2\ell_{\sf xx}}{2}\|s^k\|^2 \notag \\
&\le L(x^{k}, y^{k}) - \alpha\big(\gamma_s-\frac{\alpha\ell_{\sf xx}\Gamma_s^2}{2}\big)\|\nabla_x L(x^{k}, y^{k})\|^2,
\$
where the equality is due to the $x$-update in Algorithm \ref{alg:grand} and the last inequality is due to Lemma \ref{lem:descent_direct}.
By subtracting $L(x^{*}(y^k), y^k)$ on both sides of the preceding relation, we have an upper bound on term (B),
\#\label{eq:termb}
L(x^{k+1}, y^{k}) - L(x^{*}(y^k), y^k)\le \Delta_{x}^k - \alpha(\gamma_s-{\alpha\ell_{\sf xx}\Gamma_s^2}/{2})\|\nabla_x L(x^{k}, y^{k})\|^2.
\# 
 
 \noindent\textit{Term (C).} By definitions of the function $\psi$ in \eqref{prob:L} and $y$'s optimality measure $\Delta_{y}^k$ in \eqref{eq:terrors}, we have
 \#\label{eq:termc}
 L(x^*(y^{k}), y^{k}) - L(x^*(y^{k+1}), y^{k+1}) = \psi(y^k) - \psi(y^{k+1}) = \Delta_{y}^k - \Delta_{y}^{k+1}. 
 \#
 Finally, by substituting \eqref{eq:terma}, \eqref{eq:termb}, and \eqref{eq:termc} into \eqref{eq:deltaL}, we have
 \$
 \Delta_{x}^{k+1} &\le \Delta_{x}^k + \beta\iota_1 \|\nabla \psi(y^k)\|^2 +\Delta_{y}^k - \Delta_{y}^{k+1}  - \Big\{\alpha\big[\gamma_s-\frac{\alpha\Gamma_s^2}{2}(\ell_{\sf xx} + \beta\ell_{\sf yx}^2)\big] - \frac{\beta\iota_1\ell_{\sf yx}^2}{m_{\sf x}^2} \Big\} \|\nabla_x L(x^k,y^k)\|^2 . 
 \$
 By the definition of $\iota_2$, we conclude the proof of the proposition.
 \end{proof}
                    
 \begin{proof}[Proof of Proposition \ref{prop:pre}]
 By multiplying Lemma \ref{prop:gL} by $3\iota/c_1$ and adding Lemma \ref{prop:Lg}, we have
 \#\label{eq:step_1_and_2}
 \big(\frac{3\iota}{c_1}+1\big)\Delta_{y}^{k+1} + \Delta_{x}^{k+1} &\le  \big(\frac{3\iota}{c_1}+1\big)\Delta_{y}^{k} - \beta\big(3\iota - \frac{3\iota\beta\ell_\psi\Gamma_t^2}{c_1} - \iota_1\big) \|\nabla \psi(y^{k})\|^2 \notag \\
 &\quad + \Delta_{x}^k -\big[\iota_2 - \frac{3\iota\beta\ell_{\sf yx}^2}{c_1m_{\sf x}^2}({c_2} + {\beta\ell_\psi\Gamma_t^2} )\big]\|\nabla_xL({x}^k,y^k)\|^2.
 \#
 Now we lower bound the coefficients in \eqref{eq:step_1_and_2} by the stepsize conditions in Theorem \ref{thm:nonconvex}. First, we note that when $\beta\le c_1/[3\ell_\psi\Gamma_t^2]$ as required, using the definition of $\iota_1$, 
 we have $\iota_1\le\iota$. Thus,
 \#\label{eq:kappa_3}
 3\iota - {3\iota\beta\ell_\psi\Gamma_t^2}/{c_1} - \iota_1 \ge 2\iota - {3\iota\beta\ell_\psi\Gamma_t^2}/{c_1} \ge \iota,
 \#
 where the last inequality is due to $\beta\le c_1/[3\ell_\psi\Gamma_t^2]$. Moreover, the definition of $\iota_2$, we have
 \#\label{eq:kappa_4}
 \iota_2 - \frac{3\iota\beta\ell_{\sf yx}^2}{c_1m_{\sf x}^2}({c_2} + {\beta\ell_\psi\Gamma_t^2} )& = \alpha\Big[\gamma_s-\frac{\alpha\Gamma_s^2}{2}(\ell_{\sf xx} + \beta\ell_{\sf yx}^2)\Big] - \frac{\beta\ell_{\sf yx}^2}{m_{\sf x}^2} \big(\iota_1 + \frac{3\iota c_2}{c_1} + \frac{3\iota\beta\ell_\psi\Gamma_t^2}{c_1} \big) \notag\\
 & \ge \alpha\Big[\gamma_s-\frac{\alpha\Gamma_s^2}{2}\big(\ell_{\sf xx} + \frac{c_1\ell_{\sf yx}^2}{3\ell_\psi\Gamma_t^2}\big)\Big] - \frac{\beta\ell_{\sf yx}^2\iota}{m_{\sf x}^2}  \big(\frac{3c_2}{c_1} + 2\big)  \ge \frac{\alpha\gamma_s}{3}, 
 \#
 where the first inequality is due to $\beta\le c_1/(3\ell_\psi\Gamma_t^2)$ and thus $\iota_1\le\iota$ and the last inequality is due to the stepsize conditions.
 By substituting \eqref{eq:kappa_3} and \eqref{eq:kappa_4} into \eqref{eq:step_1_and_2}, we have
 \#\label{eq:pre_thm}
 \big(\frac{3\iota}{c_1}+1\big)\Delta_{y}^{k+1} + \Delta_{x}^{k+1} &\le  \big(\frac{3\iota}{c_1}+1\big)\Delta_{y}^{k} - \beta\iota \|\nabla \psi(y^{k})\|^2 + \Delta_{x}^k - \frac{\alpha\gamma_s}{3}\|\nabla_xL({x}^k,y^k)\|^2.
 \#
 Due to the $m_{\sf x}$-strong convexity of $L(\cdot, y^k)$ under Assumption \ref{ass:Hessian}, we have
 \$
 \|\nabla_x L({x}^{k}, y^{k})\|^2 \ge 2m_{\sf x}\big(L(x^k, y^k) - L(x^*(y^k), y^k)\big) = 2m_{\sf x}\Delta_{x}^k.
 \$
 Thus, by substituting the above relation into \eqref{eq:pre_thm}, we conclude the proof.
 \end{proof}

\begin{proof}[Proof of Theorem \ref{thm:nonconvex}]
Telescoping the inequalities in Proposition \ref{prop:pre} from $k=0$ to $K-1$, we have
\$
\big({3\iota}/{c_1}+1\big)\Delta_{y}^{K} + \Delta_{x}^{K} \le \big({3\iota}/{c_1}+1\big)\Delta_{y}^0 +\Delta_{x}^0 - \beta\iota\sum_{k=0}^{K-1}\|\nabla \psi(y^{k})\|^2 - ({2\alpha\gamma_sm_{\sf x}}/{3})\sum_{k=0}^{K-1}\Delta_{x}^k.
\$
By rearranging the terms in the above relationship, we have 
\$
\frac{1}{K}\sum_{k=0}^{K-1}\Big[\beta\iota\|\nabla \psi(y^{k})\|^2 + \frac{2\alpha\gamma_sm_{\sf x}}{3}\Delta_{x}^k\Big] & \le \frac{1}{K} \Big[\big(\frac{3\iota}{c_1}+1\big)\Delta_{y}^0 +\Delta_{x}^0-\big(\frac{3\iota}{c_1}+1\big)\Delta_{y}^{K} - \Delta_{x}^{K}\Big] \\
& \le \frac{1}{K} \Big[\big(\frac{3\iota}{c_1}+1\big)\Delta_{y}^0 +\Delta_{x}^0\Big].
\$
The last inequality holds since $\Delta_y^K\ge 0$ and $\Delta_x^K\ge 0$. This concludes the proof of the theorem.
\end{proof}

Before showing Examples \ref{ex:g-h} and \ref{ex:full-rank}, we first consider the structured minimax problem with $L$ defined in \eqref{eq:structured_minimax}. Since $\nabla_{xx}^2 L(x,y) = \nabla^2f(x)$ in \eqref{eq:structured_minimax}, the $m_{\sf x}$-strong convexity of $L(x,y)$ on $x$ and the $\ell_{\sf xx}$-Lipschitz continuity of $\nabla_x L(x,y)$ on $x$ under Assumption \ref{ass:Hessian} imply that $f(x)$ is $m_{\sf x}$-strongly convex and $\nabla f(x)$ is $\ell_{\sf xx}$-Lipschitz continuous, respectively.
Thus, the conjugate function $f^*(\lambda)$ is $(1/\ell_{\sf xx})$-strongly convex and $(1/m_{\sf x})$-Lipschitz continuous \cite{hiriart2013convex}. Moreover, we note that $\psi(y)$ defined in \eqref{eq:x_star_y} can be written as follows when $L$ is of the structured form \eqref{eq:structured_minimax},
\#\label{eq:structured-psi}
\psi(y) = -f^*(-W^\intercal y) - g(y).
\#
Now we will show that Assumption \ref{ass:psi_PL} holds for Examples \ref{ex:g-h} - \ref{ex:full-rank} in the sequel.
    
\begin{proof} [Proof of Example \ref{ex:g-h}]  If there exists a function $h:\RR^d \to\RR$ such that $g(y) = h(W^\intercal y)$, following from \eqref{eq:structured-psi}, we have
    \$
    \psi(y) = - f^*(-W^\intercal y) - h(W^\intercal y).
    \$
    If we define a function $\psi:\RR^d\to\RR$ such that $\phi = - f^* - h$, we have $\psi(y) = \phi(W^\intercal y)$. Now we prove that $\phi$ is $(1/\ell_{\sf xx} - m_h)$-strongly concave by showing the concavity of the function $\phi(\lambda) - (1/\ell_{\sf xx} - m_h)\|\lambda\|^2/2$. We decompose the function as follows,
    \$
    \phi(\lambda) + \frac{1}{2}\big(\frac{1}{\ell_{\sf xx}} - m_h\big)\|\lambda\|^2= - \big[f^*(\lambda) - \frac{1}{2\ell_{\sf xx}}\|\lambda\|^2\big]-\big[h(\lambda) + \frac{m_h}{2}\|\lambda\|^2\big],
    \$
    where the function $f^*(\lambda) - \|\lambda\|^2/(2\ell_{\sf xx})$ is convex due to the $(1/\ell_{\sf xx})$-strong convexity of $f^*$ and it is assumed that the function $h(\lambda) + m_h\|\lambda\|^2/2$ is convex. Therefore, the function $\phi(\lambda) - (1/\ell_{\sf xx} - m_h)\|\lambda\|^2/2$ is concave and thus $\phi$ is $(1/\ell_{\sf xx} - m_h)$-strongly concave. Since the function $\psi$ is a strongly concave $\phi$ composed with a linear mapping $W^\intercal$, it satisfies the PL inequality with $p_\psi = \sigma_{\min}^+(W)(1/\ell_{\sf xx} - m_h)$ \cite{karimi2016linear}.
    \end{proof}

\begin{proof}[Proof of Example \ref{ex:side-PL}] 
        For any $y$, Lemma \ref{lem:psi_property} and the one-sided PL condition at $(x^*(y), y)$ yield
        \$
        \big\|\nabla \psi(y)\big\|^2 &= \|\nabla_y L(x^*(y),y)\|^2 \\
        &\ge 2p_\psi[\max_{\tilde y} L(x^*(y),{\tilde y}) - L(x^*(y),y)] \\
        & \ge 2p_\psi[L(x^*(y),y^*) - L(x^*(y),y)] \\
        & \ge 2p_\psi[L(x^*(y^*),y^*) - L(x^*(y),y)] \\
        & = 2p_\psi[\psi(y^*) - \psi(y)],
        \$
        where the second inequality holds due to $\max_{\tilde y} L(x^*(y),{\tilde y})\ge L(x^*(y),\hat y)$ for any $\hat y\in\RR^p$ and the last inequality is due to $x^*(y^*) =\argmin_x L(x,y^*)$. 
        \end{proof}
    
\begin{proof}[Proof of Example \ref{ex:full-rank}] We calculate $\nabla^2 \psi(y)$ based on \eqref{eq:structured-psi} as follows,
    \#\label{eq:casec}
    \nabla^2\psi(y) = - W[\nabla^2f^*(W^\intercal y)]W^\intercal - \nabla^2g(y).
    \#
    Since $\rank(WW^\intercal) = \rank(W^\intercal) =\rank(W)= p$ and $WW^\intercal\in\RR^{p\times p}$, the matrix $WW^\intercal$ has full rank and thus $\sigma_{\min}(W) >0$. For any $\omega\in\RR^p$, since $\nabla^2f^*(W^\intercal y)\succeq (1/\ell_{\sf xx})I_p$, we have
    \$
    \omega^\intercal W[\nabla^2f^*(W^\intercal y)]W^\intercal \omega \ge \frac{1}{\ell_{\sf xx}} \omega^\intercal WW^\intercal \omega \ge \frac{\sigma_{\min}^2(W)}{\ell_{\sf xx}}\|\omega\|^2.
    \$
    Therefore, we have $W[\nabla^2f^*(W^\intercal y)]W^\intercal \succeq [\sigma_{\min}^2(W)/\ell_{\sf xx}]I_p$. Moreover, we have $\nabla^2g(y) \succeq -m_gI_p$ since $g(y) + m_g\|y\|^2/2$ is convex. Thus, following from \eqref{eq:casec}, we have
    \$
    \nabla^2\psi(y) \preceq - \big(\frac{\sigma_{\min}^2(W)}{\ell_{\sf xx}} - m_g\big) I_p.
    \$
    This implies that $\psi(y)$ is $[\sigma_{\min}^2(W)/\ell_{\sf xx}-m_g]$-strongly concave. Therefore, $\psi(y)$ satisfies Assumption \ref{ass:psi_PL} with $p_\psi = \sigma_{\min}^2(W)/\ell_{\sf xx}-m_g$ \cite{karimi2016linear}.
\end{proof}

\begin{proof}[Proof of Theorem \ref{thm:PL}]
    Substituting the PL condition in Assumption \ref{ass:psi_PL} into Proposition \ref{prop:pre}, we have
    \$
    ({3\iota}/{c_1}+1)\Delta_{y}^{k+1} + \Delta_{x}^{k+1} &\le  ({3\iota}/{c_1}+1- 2\beta\iota p_\psi)\Delta_{y}^{k}+ (1- {2\alpha\gamma_sm_{\sf x}}/{3} )\Delta_{x}^k \\
    & = [1 - {2\beta\iota p_\psi c_1}(3\iota + c_1)]({3\iota}/{c_1}+1)\Delta_{y}^{k}+ (1- {2\alpha\gamma_sm_{\sf x}}/{3})\Delta_{x}^k \\
    & \le (1-\delta) [({3\iota}/{c_1}+1)\Delta_{y}^{k} + \Delta_{x}^{k}],
    \$
    where $\delta$ is defined in \eqref{eq:delta}. The stepsize conditions with $\gamma_s \le \Gamma_s$ in Lemma \ref{lem:descent_direct} and $m_{\sf x}\le \ell_{\sf xx}$ give
    \$
    \alpha \le {2\gamma_s}/{[\Gamma_s^2(3\ell_{\sf xx} + c_1\ell_{\sf yx}^2/\ell_\psi\Gamma_t^2)]} \le{2\gamma_s}/(3\Gamma_s^2\ell_{\sf xx}) \le {2}/(3\gamma_s m_{\sf x})  < {3}/(2\gamma_s m_{\sf x}).
    \$
    Thus, we have $0<\delta<1$ when stepsizes $\alpha$ and $\beta$ satisfy the required conditions.
    \end{proof}
    Proofs of Theorems \ref{thm:nonconvex-alt} and \ref{thm:PL-alt} follows the same steps as those of Theorems \ref{thm:nonconvex} and \ref{thm:PL}.

\section{Proof of Theorem \ref{thm:uj-local-quad}}
\begin{proof}[Proof of Theorem \ref{thm:uj-local-quad}]
    We note that $U_J$ is twice differentiable under Assumption \ref{ass:Hessian}.
    Moreover, it is easy to check that $X(x^\dagger, y^\dagger) = (x^\dagger, y^\dagger)$ and $Y(x^\dagger, y^\dagger) = (x^\dagger, y^\dagger)$, and thus $U_J(x^\dagger, y^\dagger)=(x^\dagger, y^\dagger)$, meaning that $(x^\dagger, y^\dagger)$ is a fixed point of $U_J$. Now we only need to show that $U_J^\prime(x^\dagger, y^\dagger)=0$ due to Lemma \ref{lem:ortega}. For convenience, we define the derivatives $X^\prime_* = X^\prime(x^\dagger,y^\dagger)$ and $Y^\prime_* = Y^\prime(x^\dagger,y^\dagger)$. By using first-order stationarity of $(x^\dagger,y^\dagger)$, we obtain that,
    \$
    & X^\prime_* = \begin{pmatrix}
        0 & - [\nabla_{xx}^2L(x^\dagger,y^\dagger)]^{-1}\nabla_{xy}^2L(x^\dagger,y^\dagger) \\
        0 & I\end{pmatrix}, \\
    & Y^\prime_* = \begin{pmatrix}
        I & 0 \\
        [N(x^\dagger,y^\dagger)]^{-1}\nabla_{yx}^2L(x^\dagger,y^\dagger) & I + [N(x^\dagger,y^\dagger)]^{-1}\nabla_{yy}^2L(x^\dagger,y^\dagger)
    \end{pmatrix}.
    \$
    Based on the above formula, it is easy to show that $[X^\prime_*]^2 = X^\prime_*$, meaning that $X^\prime_*$ is idempotent.
    Since $(x^\dagger, y^\dagger)$ is a fixed point of $X$ and $Y$, by the chain rule and the idempotence of $X^\prime_*$, we have
    \#\label{eq:U-prime}
    U_J^\prime(x^\dagger, y^\dagger) = (X_*^\prime)^J Y_*^\prime X_*^\prime = X_*^\prime Y_*^\prime X_*^\prime.
    \#
    Straightforward algebraic manipulation yields that
    \$
    &Y^\prime_* X^\prime_* \\
    & = \begin{pmatrix}
        0 & - [\nabla_{xx}^2L(x^\dagger,y^\dagger)]^{-1}\nabla_{xy}^2L(x^\dagger,y^\dagger) \\
        0 & - [N(x^\dagger,y^\dagger)]^{-1}\nabla_{yx}^2L(x^\dagger,y^\dagger)[\nabla_{xx}^2L(x^\dagger,y^\dagger)]^{-1}\nabla_{xy}^2L(x^\dagger,y^\dagger) + I + [N(x^\dagger,y^\dagger)]^{-1}\nabla_{yy}^2L(x^\dagger,y^\dagger)\end{pmatrix} \\
        &= \begin{pmatrix}
            0 & - [\nabla_{xx}^2L(x^\dagger,y^\dagger)]^{-1}\nabla_{xy}^2L(x^\dagger,y^\dagger) \\
            0 & 0\end{pmatrix},
    \$
    where the last equality holds due to the definition of $N$ in \eqref{eq:N-operator}. 
    Then by further calculation, we have $X_*^\prime Y_*^\prime X_*^\prime = 0$. Thus, by substituting it into \eqref{eq:U-prime}, we have
    $
    U_J^\prime(x^\dagger, y^\dagger) = X_*^\prime Y_*^\prime X_*^\prime = 0.
    $
     Thus, the iterates generated by $U_J$ converge locally in at least a quadratic rate due to Lemma \ref{lem:ortega}.
\end{proof}

\end{document}